\begin{document}

\def\alp{\alpha}
\def\bet{\beta}
\def\gam{\gamma}
\def\del{\delta}
\def\eps{\epsilon}
\def\zet{\zeta}
\def\tht{\theta}
\def\iot{\iota}
\def\kap{\kappa}
\def\lam{\lambda}
\def\sig{\sigma}
\def\ome{\omega}
\def\vep{\varepsilon}
\def\vth{\vartheta}
\def\vpi{\varpi}
\def\vrh{\varrho}
\def\vsi{\varsigma}
\def\vph{\varphi}
\def\Gam{\Gamma}
\def\Del{\Delta}
\def\Tht{\Theta}
\def\Lam{\Lambda}
\def\Sig{\Sigma}
\def\Ups{\Upsilon}
\def\Ome{\Omega}
\def\vDe{\varDelta}
\def\vLm{\varLambda}
\def\vTh{\varTheta}
\def\vGm{\varGamma}
\def\vPh{\varPhi}
\def\vPs{\varPsi}

\def\frka{{\mathfrak a}}    \def\frkA{{\mathfrak A}}
\def\frkb{{\mathfrak b}}    \def\frkB{{\mathfrak B}}
\def\frkc{{\mathfrak c}}    \def\frkC{{\mathfrak C}}
\def\frkd{{\mathfrak d}}    \def\frkD{{\mathfrak D}}
\def\frke{{\mathfrak e}}    \def\frkE{{\mathfrak E}}
\def\frkf{{\mathfrak f}}    \def\frkF{{\mathfrak F}}
\def\frkg{{\mathfrak g}}    \def\frkG{{\mathfrak G}}
\def\frkh{{\mathfrak h}}    \def\frkH{{\mathfrak H}}
\def\frki{{\mathfrak i}}    \def\frkI{{\mathfrak I}}
\def\frkj{{\mathfrak j}}    \def\frkJ{{\mathfrak J}}
\def\frkk{{\mathfrak k}}    \def\frkK{{\mathfrak K}}
\def\frkl{{\mathfrak l}}    \def\frkL{{\mathfrak L}}
\def\frkm{{\mathfrak m}}    \def\frkM{{\mathfrak M}}
\def\frkn{{\mathfrak n}}    \def\frkN{{\mathfrak N}}
\def\frko{{\mathfrak o}}    \def\frkO{{\mathfrak O}}
\def\frkp{{\mathfrak p}}    \def\frkP{{\mathfrak P}}
\def\frkq{{\mathfrak q}}    \def\frkQ{{\mathfrak Q}}
\def\frkr{{\mathfrak r}}    \def\frkR{{\mathfrak R}}
\def\frks{{\mathfrak s}}    \def\frkS{{\mathfrak S}}
\def\frkt{{\mathfrak t}}    \def\frkT{{\mathfrak T}}
\def\frku{{\mathfrak u}}    \def\frkU{{\mathfrak U}}
\def\frkv{{\mathfrak v}}    \def\frkV{{\mathfrak V}}
\def\frkw{{\mathfrak w}}    \def\frkW{{\mathfrak W}}
\def\frkx{{\mathfrak x}}    \def\frkX{{\mathfrak X}}
\def\frky{{\mathfrak y}}    \def\frkY{{\mathfrak Y}}
\def\frkz{{\mathfrak z}}    \def\frkZ{{\mathfrak Z}}

\def\cal{\fam2}
\def\cala{{\cal A}}
\def\calb{{\cal B}}
\def\calc{{\cal C}}
\def\cald{{\cal D}}
\def\cale{{\cal E}}
\def\calf{{\cal F}}
\def\calg{{\cal G}}
\def\calh{{\cal H}}
\def\cali{{\cal I}}
\def\calj{{\cal J}}
\def\calk{{\cal K}}
\def\call{{\cal L}}
\def\calm{{\cal M}}
\def\caln{{\cal N}}
\def\calo{{\cal O}}
\def\calp{{\cal P}}
\def\calq{{\cal Q}}
\def\calr{{\cal R}}
\def\cals{{\cal S}}
\def\calt{{\cal T}}
\def\calu{{\cal U}}
\def\calv{{\cal V}}
\def\calw{{\cal W}}
\def\calx{{\cal X}}
\def\caly{{\cal Y}}
\def\calz{{\cal Z}}

\def\AA{{\mathbb A}}
\def\BB{{\mathbb B}}
\def\CC{{\mathbb C}}
\def\DD{{\mathbb D}}
\def\EE{{\mathbb E}}
\def\FF{{\mathbb F}}
\def\GG{{\mathbb G}}
\def\HH{{\mathbb H}}
\def\II{{\mathbb I}}
\def\JJ{{\mathbb J}}
\def\KK{{\mathbb K}}
\def\LL{{\mathbb L}}
\def\MM{{\mathbb M}}
\def\NN{{\mathbb N}}
\def\OO{{\mathbb O}}
\def\PP{{\mathbb P}}
\def\QQ{{\mathbb Q}}
\def\RR{{\mathbb R}}
\def\SS{{\mathbb S}}
\def\TT{{\mathbb T}}
\def\UU{{\mathbb U}}
\def\VV{{\mathbb V}}
\def\WW{{\mathbb W}}
\def\XX{{\mathbb X}}
\def\YY{{\mathbb Y}}
\def\ZZ{{\mathbb Z}}

\def\bfa{{\mathbf a}}    \def\bfA{{\mathbf A}}
\def\bfb{{\mathbf b}}    \def\bfB{{\mathbf B}}
\def\bfc{{\mathbf c}}    \def\bfC{{\mathbf C}}
\def\bfd{{\mathbf d}}    \def\bfD{{\mathbf D}}
\def\bfe{{\mathbf e}}    \def\bfE{{\mathbf E}}
\def\bff{{\mathbf f}}    \def\bfF{{\mathbf F}}
\def\bfg{{\mathbf g}}    \def\bfG{{\mathbf G}}
\def\bfh{{\mathbf h}}    \def\bfH{{\mathbf H}}
\def\bfi{{\mathbf i}}    \def\bfI{{\mathbf I}}
\def\bfj{{\mathbf j}}    \def\bfJ{{\mathbf J}}
\def\bfk{{\mathbf k}}    \def\bfK{{\mathbf K}}
\def\bfl{{\mathbf l}}    \def\bfL{{\mathbf L}}
\def\bfm{{\mathbf m}}    \def\bfM{{\mathbf M}}
\def\bfn{{\mathbf n}}    \def\bfN{{\mathbf N}}
\def\bfo{{\mathbf o}}    \def\bfO{{\mathbf O}}
\def\bfp{{\mathbf p}}    \def\bfP{{\mathbf P}}
\def\bfq{{\mathbf q}}    \def\bfQ{{\mathbf Q}}
\def\bfr{{\mathbf r}}    \def\bfR{{\mathbf R}}
\def\bfs{{\mathbf s}}    \def\bfS{{\mathbf S}}
\def\bft{{\mathbf t}}    \def\bfT{{\mathbf T}}
\def\bfu{{\mathbf u}}    \def\bfU{{\mathbf U}}
\def\bfv{{\mathbf v}}    \def\bfV{{\mathbf V}}
\def\bfw{{\mathbf w}}    \def\bfW{{\mathbf W}}
\def\bfx{{\mathbf x}}    \def\bfX{{\mathbf X}}
\def\bfy{{\mathbf y}}    \def\bfY{{\mathbf Y}}
\def\bfz{{\mathbf z}}    \def\bfZ{{\mathbf Z}}

\def\scra{{\mathscr A}}
\def\scrb{{\mathscr B}}
\def\scrc{{\mathscr C}}
\def\scrd{{\mathscr D}}
\def\scre{{\mathscr E}}
\def\scrf{{\mathscr F}}
\def\scrg{{\mathscr G}}
\def\scrh{{\mathscr H}}
\def\scri{{\mathscr I}}
\def\scrj{{\mathscr J}}
\def\scrk{{\mathscr K}}
\def\scrl{{\mathscr L}}
\def\scrm{{\mathscr M}}
\def\scrn{{\mathscr N}}
\def\scro{{\mathscr O}}
\def\scrp{{\mathscr P}}
\def\scrq{{\mathscr Q}}
\def\scrr{{\mathscr R}}
\def\scrs{{\mathscr S}}
\def\scrt{{\mathscr T}}
\def\scru{{\mathscr U}}
\def\scrv{{\mathscr V}}
\def\scrw{{\mathscr W}}
\def\scrx{{\mathscr X}}
\def\scry{{\mathscr Y}}
\def\scrz{{\mathscr Z}}

\def\phm{\phantom}
\def\ds{\displaystyle }
\def\smallstrut{\vphantom{\vrule height 3pt }}
\def\bdm #1#2#3#4{\left(
\begin{array} {c|c}{\ds{#1}}
 & {\ds{#2}} \\ \hline
{\ds{#3}\vphantom{\ds{#3}^1}} &  {\ds{#4}}
\end{array}
\right)}

\def\GL{\mathrm{GL}}
\def\SL{\mathrm{SL}}
\def\Sp{\mathrm{Sp}}
\def\SU{\mathrm{SU}}
\def\SO{\mathrm{SO}}
\def\Spin{\mathrm{Spin}}
\def\U{\mathrm{U}}
\def\O{\mathrm{O}}
\def\Mat{\mathrm{M}}
\def\Nr{\mathrm{Nr}}
\def\Tr{\mathrm{Tr}}
\def\tr{\mathrm{tr}}
\def\Ad{\mathrm{Ad}}
\def\Nrm{\mathrm{Nrm}}
\def\Prm{\mathrm{Prm}}
\def\Paf{\mathrm{Paf}}
\def\Lif{\mathrm{Lift}}
\def\lift{\mathrm{lift}}
\def\rank{\mathrm{rank}}
\def\new{\mathrm{new}}
\def\cusp{\mathrm{cusp}}
\def\oname{\mathrm}
\def\sgn{\mathrm{sgn}}
\def\diag{\mathrm{diag}}
\def\nab{\nabla}
\def\La{\langle}
\def\Ra{\rangle}

\def\trs{\,^t\!}
\def\iu{\sqrt{-1}}
\def\oo{\hbox{\bf 0}}
\def\ono{\hbox{\bf 1}}
\def\smallcirc{\lower .3em \hbox{\rm\char'27}\!}
\def\AAf{\AA_{\rm f}}
\def\thalf{\textstyle{\frac{1}{2}}}
\def\bsl{\backslash}
\def\wtl{\widetilde}
\def\til{\tilde}
\def\tx{\text}
\def\Chi{\underline{\chi}}
\def\Psi{\underline{\psi}}
\def\oln{\overline}
\def\Ind{\operatorname{Ind}}
\def\ord{\operatorname{ord}}
\def\pal{\partial}
\def\beq{\begin{equation}}
\def\eeq{\end{equation}}
\def\Let{\begin{pmatrix}}
\def\Rit{\end{pmatrix}}
\def\Left{\left(\begin{smallmatrix}}
\def\Right{\end{smallmatrix}\right)}
\def\Leftt{\begin{smallmatrix}}
\def\Rightt{\end{smallmatrix}}
\def\Sum{\displaystyle \sum}
\def\Prod{\displaystyle \prod}
\def\Iot{^{\iota}}
\def\Pr{^{\prime}}
\def\I{\textgt{i}}
\def\J{\textgt{j}}
\def\K{\textgt{k}}

\newcounter{one}
\setcounter{one}{1}
\newcounter{two}
\setcounter{two}{2}
\newcounter{thr}
\setcounter{thr}{3}
\newcounter{fou}
\setcounter{fou}{4}
\newcounter{fiv}
\setcounter{fiv}{5}

\def\FRAC#1#2{\leavevmode\kern.1em
\raise.5ex\hbox{\the\scriptfont0 #1}\kern-.1em
/\kern-.15em\lower.25ex\hbox{\the\scriptfont0 #2}}
\newcommand{\Leg}[2]{\left(\!\FRAC{\ensuremath{#1}}{\ensuremath{#2}}\right)}
\def\DFRAC#1#2{\leavevmode\kern.1em
\raise1.0ex\hbox{#1}\kern-.2em
{\Big/}\kern-.3em\lower.9ex\hbox{#2}}
\newcommand{\DLeg}[2]{\left(\!\DFRAC{\ensuremath{#1}}{\ensuremath{#2}}\right)}
\def\mtrx#1#2#3#4{\left(\begin{smallmatrix} {\text{\small $#1$}}\; & {\text{\small $#2$}} \\ \vphantom{C^{C^C}}{\text{\small $#3$}}\; & \vphantom{C^2}{\text{\small $#4$}} \end{smallmatrix}\right)}

\newcommand{\sfrac}[2]{%
\raisebox{0.2em}{$#1$}\kern-0.1em%
\raisebox{0.1em}{/}\kern-0.1em\raisebox{-0.2em}{$#2$}}

\def\lddots{\mathinner{\mskip1mu\raise1pt\vbox{\kern7pt\hbox{.}}\mskip2mu\raise4pt\hbox{.}\mskip2mu\raise7pt\hbox{.}\mskip1mu}}
\def\onon{\Left 0 & -1 \\ 1 & 0\Right}

\def\today{\ifcase\month\or
 January\or February\or March\or April\or May\or June\or
 July\or August\or September\or October\or November\or December\fi
 \space\number\day, \number\year}

\makeatletter
\@addtoreset{equation}{section}
\def\theequation{\thesection.\arabic{equation}}

\theoremstyle{plain}
\newtheorem{theorem}{Theorem}[section]
\newtheorem*{main_theorem}{Theorem}
\newtheorem{lemma}[theorem]{Lemma}
\newtheorem{proposition}[theorem]{Proposition}
\theoremstyle{definition}
\newtheorem{definition}[theorem]{Definition}
\newtheorem{conjecture}[theorem]{Conjecture}
\theoremstyle{remark}
\newtheorem{remark}[theorem]{Remark}
\newtheorem*{main_remark}{Remark}
\newtheorem{corollary}[theorem]{Corollary}


\title[]{Jacobi forms of degree one}
\author{Shunsuke Yamana}
\address{Graduate school of mathematics, Kyoto University, Kitashirakawa, Kyoto, 606-8502, Japan}
\email{yamana07@math.kyoto-u.ac.jp}
\begin{abstract}
We show that a certain subspace of space of elliptic cusp forms is isomorphic as a Hecke module to a certain subspace of space of Jacobi cusp forms of degree one with matrix index by constructing an explicit lifting. 
This is a partial generalization of the work of Skoruppa and Zagier. 
This lifting is also related with the Ikeda lifting.  
\end{abstract}
\thanks{The author thanks Prof.~Ikeda, Dr.~Kawamura, Dr.~Narita, Prof.~Sugano, and Prof.~Ueda for useful discussion. The author is supported by JSPS Research Fellowships for Young Scientists. }
\maketitle


\section*{\bf Introduction}\label{intro}

The work of Skoruppa and Zagier \cite{SZ} established a bijective correspondence between elliptic modular forms and Jacobi forms of degree one with scalar index which is compatible with the action of the Hecke operators. 
However, it still remains difficult to generalize all of their results to matrix index. 

In this paper we will give an explicit recipe which associates to an elliptic cusp form a Jacobi cusp form of degree one with matrix index, which gives a partial generalization of \cite{SZ}.   

Let us give a more precise description of our results. 
Throughout this paper, we fix a positive integers $k$, $\kap$ and a positive definite symmetric even integral matrix $S$ of rank $n$ satisfying the following conditions: 
\begin{itemize}
\item $L=\ZZ^n$ is a maximal integral lattice with respect to $S$; 
\item $\kap=k+\bigl[\frac{n+1}{2}\bigl]$ is even. 
\end{itemize}
Let $L^*$ be the dual lattice of $L$ with respect to $S$ and $\calt^+$ the set of all pairs $(a, \alp)\in \ZZ\times L^*$ such that $a>S[\alp]/2$, where $S[\alp]=\trs\alp S\alp$.  

For each prime $p$, the quadratic form $q_p$ on $V_p=L/pL$ is defined by 
\[q_p[x]=S[x]/2\pmod p. \] 
Let $s_p(S)$ be the dimension of the radical of $(V_p,q_p)$. 
Note that $s_p(S)\leq 2$ since $L$ is a maximal integral lattice. 
Put $\frkS_i=\{p\;|\;s_p(S)=i\}$. 
Let $\nu_p$ be the Witt index of $S$ over $\QQ_p$ and define $\eta_p(S)\in\{\pm 1\}$ by 
\[n=2\nu_p+2-\eta_p(S). \]  

For the sake of simplicity, we suppose that $n$ is odd in this introductory section. 
Let $b_S$ (resp. $d_S$) be the product of the rational primes in $\frkS_1$ (resp. $\frkS_2$). 
For $(a,\alp)\in\calt^+$, we put
\begin{align*}
\Del_{a,\alp}&=\det S_{a,\alp}, & S_{a,\alp}&=\mtrx{S}{S\alp}{\trs\alp S}{2a}.  
\end{align*}

We denote by $\frkH$ the upper half-plane and put $\cald=\frkH\times\CC^n$. 
We write $J^\cusp_{\kap,S}$ for the space of Jacobi cusp forms of weight $\kap$ with index $S$. 
Our main object of study is the following subspace: 
\[J^{\cusp,M}_{\kap,S}=\Big\{\phi\in J^\cusp_{\kap,S}\;\Big|\; \phi(\tau,w)=\sum_{(a,\alp)\in\calt^+}c_\phi(\Del_{a,\alp})q^a\bfe(S(\alp,w))\Big\}, \]
where $q=\bfe(\tau)=e^{2\pi\iu\tau}$ and $(\tau,w)\in\cald$. 

For a nonzero element $a$ in $\QQ_p$, put $\Psi_p(a)=1$, $-1$, $0$ according as $\QQ_p(\sqrt{a})$ is $\QQ_p$, an unramified quadratic extension of $\QQ_p$ or a ramified quadratic extension of $\QQ_p$. 
For each positive rational number $N$, we denote the absolute value of the discriminant of $\QQ(((-1)^kN)^{1/2})/\QQ$ by $\frkd_N$. 
Let $\frkf_N$ be the positive rational number such that $N=\frkd_N\frkf_N^2$. 

Let $b$ (resp. $d$) be a positive divisor of $b_S$ (resp. $d_S$). Let 
\[f(\tau)=\sum_{m=1}^\infty c_f(m)q^m\in S_{2k}(\Gam_0(bd))\]
be a primitive form, the $L$-function of which is given by 
\begin{multline*}
L(f,s)=\sum_{m=1}^\infty c_f(m)m^{-s}\\
=\prod_{p|bd}(1-\alp_pp^{k-1/2-s})^{-1}
\prod_{p\nmid bd}(1-\alp_pp^{k-1/2-s})^{-1}(1-\alp_p^{-1}p^{k-1/2-s})^{-1}.
\end{multline*}
Suppose that $f|W(p)=\eta_p(S)f$ for each prime divisor $p$ of $b$, where $W(p)$ is the Atkin-Lehner involution on $S_{2k}(\Gam_0(bd))$.  
Let 
\[g(\tau)=\sum_{m=1}^\infty c_g(m)q^m\in S^+_{k+1/2}(4bd)\]
be a Hecke eigenform which corresponds to $f$ via the Shimura correspondence. 
Here, $S^+_{k+1/2}(4bd)$ is the Kohnen plus subspace on $\Gam_0(4bd)$. 
Note that $g$ is unique up to scalar (see \S \ref{sec:2}). 
For each positive integer $N$, we put 
\[c_\vPh(N)=2^{-\bfb_{bd}(N)}c_g(\frkd_{N})\frkf_{N}^{k-1/2}\prod_pl_{p,S,N}(\alp_p), \]
where $l_{p,S,N}$ is a reciprocal Laurent polynomial given explicitly in \S \ref{sec:3} and 
\[\bfb_{bd}(N)=\sharp\{\tx{prime divisors $p$ of }bd\;|\; \Psi_p((-1)^kN)\neq 0\}. \]   
The space of newforms for $S_{2k}(\Gam_0(bd))$ is denoted by $S^\new_{2k}(bd)$. 

The aim in this paper is to prove the following theorem. 

\begin{main_theorem}[{cf. Theorem \ref{thm:31}}]
If $\kap=k+\frac{n+1}{2}$ is even, then 
\[\vPh(\tau,w)=\sum_{(a,\alp)\in\calt^+}c_\vPh(\Del_{a,\alp})q^a\bfe(S(\alp,w))\]
is a Hecke eigenform in $J^{\cusp,M}_{\kap,S}$ whose $L$-function is given by 
\[L(\vPh,s)=\prod_p(1-\alp_pp^{-s})^{-1}(1-\alp_p^{-1}p^{-s})^{-1}. \] 
Moreover, the lifting $f\mapsto\vPh$ gives a bijective correspondence, up to scalar multiple, between Hecke eigenforms in 
\[\oplus_{b,d\geq 1,\;b|b_S,\; d|d_S}\{f\in S^\new_{2k}(bd)\;|\;f|W(p)=\eta_p(S)f\tx{ for each }p|b\}, \]
and those in $J^{\cusp,M}_{\kap,S}$. 
\end{main_theorem}

We also obtain a similar lifting in the case when $n$ is even (see Theorem \ref{thm:32}). 
While \cite{SZ} uses the trace formula, our method of proof is fairly classical. 
The process of the proof is nothing but a generalization of one of main steps in the proof of the Saito-Kurokawa conjecture exposited in Eichler-Zagier \cite{EZ}. 
 
It turns out that there is an auxiliary space $S^S_{k+1/2}(\vDe_S)$ such that the correspondence above is the composition of three isomorphisms:  
\begin{gather*}
\quad\quad\quad\quad\quad J^{\cusp,M}_{\kap,S}(\simeq S^M_\kap(\Tht))\\
i_1\; |\wr \\
S^S_{k+1/2}(\vDe_S)\\
i_2\; |\wr \\
\oplus_{b,d\geq 1,\;b|b_S,\; d|d_S}\{g\in S^\new_{k+1/2}(4bd))\;|\;g\tx{ satisfies (\roman{one}) for each }p|b\}\\
i_3\; |\wr \\
\oplus_{b,d\geq 1,\;b|b_S,\; d|d_S}\{f\in S^\new_{2k}(bd)\;|\;f|W(p)=\eta_p(S)f\tx{ for each }p|b\}. 
\end{gather*} 
As is well-known, a Jacobi form $\phi\in J^{\cusp,M}_{\kap,S}$ is expressed as a sum 
\[\phi(\tau,w)=\sum_\mu\phi_\mu(\tau)\vth^S_\mu(\tau,w), \]
where 
\[\phi_\mu(\tau)=\sum_mc_\phi(\Del_{m,\mu})\bfe((m-S[\mu]/2)\tau) \]
and $\{\vth^S_\mu\}$ is a collection of theta functions indexed by $\mu\in L^*/L$. 
The first map $i_1$ is a canonical map defined by modifying the mapping $\phi\mapsto\phi_0$ slightly. 
Putting $\vDe_S=4b_Sd_S$ and observing the conditions 
\begin{enumerate}
\renewcommand\labelenumi{(\theenumi)}
\item[(\roman{one})] $c_g(m)=0$ if $\biggl(\dfrac{(-1)^km}{p}\biggl)=-\eta_p(S)$,    
\item[(\roman{two})] $\Tr^S_p(g)=0$   
\end{enumerate} 
for $p\in\frkS_1\cup\frkS_2$, we define the space $S^S_{k+1/2}(\vDe_S)$ by 
\begin{multline*}
S^S_{k+1/2}(\vDe_S)=\{g\in S^+_{k+1/2}(\vDe_S)\;|\; g\tx{ satisfies (\roman{one}) for each }p\in\frkS_1 \\
\tx{and satisfies (\roman{two}) for each }p\in\frkS_2\}. 
\end{multline*}
Here, $\Tr^S_p$ is the trace operator from $S^+_{k+1/2}(\vDe_S)$ to $S^+_{k+1/2}(p^{-1}\vDe_S)$. 
The second canonical isomorphism $i_2$ is proved in \S \ref{sec:4}, and $i_3$ is the Shimura correspondence. 

Recall that the Siegel Eisenstein series played a crucial role in Ikeda's proof of the Duke-Imamoglu conjecture. 
We shall show that the Jacobi Eisenstein series of weight $\kap$ with index $S$ has a Fourier expansion of the form 
\[E_{\kap,S}(\tau,w)=\sum_{a=S[\alp]/2}q^a\bfe(S(\alp,w))
+C_{\kap,S}\sum_{(a,\alp)\in\calt^+}A(\Del_{a,\alp})q^a\bfe(S(\alp,w)), \]
where 
\[A(N)=L(1-k,\psi_{(-1)^kN})\frkf_N^{k-1/2}\prod_pl_{p,S,N}(p^{k-1/2})\]  
for a suitable constant $C_{\kap,S}$ (see Corollary \ref{cor:81}). 
Here, $L(s, \psi_{(-1)^kN})$ is the Dirichlet $L$-function with the primitive Dirichlet character $\psi_{(-1)^kN}$ corresponding to $\QQ(((-1)^kN)^{1/2})/\QQ$. 
Therefore the Fourier coefficients of $\vPh$ and $E_{\kap,S}$ turn out to be quite similar. 
This similarity allows us to prove the surjectivity of $i_1$ and to show that $\vPh$ is a Hecke eigenform. 

We first discuss the case when $n$ is odd in \S \ref{sec:4} and \S \ref{sec:5}, and then the case when $n$ is even in \S \ref{sec:6} and \S \ref{sec:7}. 
A uniform treatment of these two cases is not impossible but cumbersome, so we believe that our treatment, if redundant to some extent, makes our exposition easier to read. 
 
The proof includes the following decomposition
\[J^\cusp_{\kap,S}=J^{\cusp,M}_{\kap,S}\oplus J^{\cusp,0}_{\kap,S}, \]
where 
\[J^{\cusp,0}_{\kap,S}=\{\phi\in J^\cusp_{\kap,S}\;|\;\phi_0\tx{ is identically zero}\} \]
(see Proposition \ref{prop:72}). 
It seems likely that an analogous result holds for the space $J^{\cusp,0}_{\kap,S}$. 
However, as a strategy for attacking this problem, we may need a more general machinery, for example, a trace formula. 

We shall in fact show that  
\[l_{p,S,\Del_{a,\alp}}(X)=\til{F}_p(2^{-1}S_{a,\alp}; X), \]
where the right-hand side is the essential part of the Siegel series for $2^{-1}S_{a,\alp}$ at $p$ (see Proposition \ref{prop:92}). 
It follows that $\vPh$ coincides with the $S/2$-th Fourier-Jacobi coefficient of the Ikeda lift of $f$ if $b=d=1$, and as such, our result in this paper is used in our forthcoming work on a generalization of Maass relations to higher genus. 
This application is one of the main motivations for the present paper. 

Recall that another main step in the proof of the Saito-Kurokawa conjecture is the Fourier-Jacobi expansion of Siegel modular forms in the Maass space. 
This step was studied by many authors and generalized to holomorphic cusp forms on orthogonal groups of signature $(2,n+2)$ (for example, see Kojima \cite{Kj}, Krieg \cite{Kr,Kr2,Kr3}, Gritsenko \cite{G} and Sugano \cite{Su}). 
Murase and Sugano \cite{MS} studied the space $J^{\cusp,M}_{\kap,S}$ in connection with the Maass space $S^M_\kap(\Tht)$ attached to the orthogonal group. 
In \S \ref{sec:10} we restate their works by exploiting our result just described. 


\section*{\bf Notation}

Let $X$ be an $n$-dimensional vector space over $\QQ$ which is equipped with a positive definite quadratic form $S: X\to \QQ$. The associated bilinear form is given by
\[S(x, y)=(S[x+y]-S[x]-S[y])/2. \]
Let $L$ be a maximal integral lattice with respect to $S$, namely, if $M$ is a lattice containing $L$ such that $S[x]/2\in\ZZ$ for every $x\in M$, then $M=L$. 
We frequently identify $S$ (resp. $X$, $L$) with an even integral positive definite symmetric matrix of rank $n$ (resp. $\QQ^n$, $\ZZ^n$). 

For $x\in\RR$, we denote by $[x]$ the Gauss bracket of $x$. 
We denote by the formal symbol $\infty$ the infinite place of $\QQ$ and do not use $p$ or $q$ for the infinite place. 
Let $\AA$ be the adele ring of $\QQ$ and $\AAf$ the finite part of the adele ring. 

Set $q=\bfe(\tau)=\bfe_\infty(\tau)=\exp(2\pi\iu\tau)$ for $\tau\in\CC$. 
For a modular form $h$, we denote the $m$-th Fourier coefficient of $h$ by $c_h(m)$. 
Put 
\[\bfe_p(x)=\exp(-2\pi\iu(\tx{fractional part of }x))\] 
for $x\in \QQ_p$. 
If $z\in\CC$ and $\ell\in\ZZ$, then $z^{1/2}$ denotes the square root of $z$ such that $-\pi/2<\arg z^{1/2}\leq \pi/2$ and $z^{\ell/2}=(z^{1/2})^\ell$. 

Let $\ono_m$ (resp. $\oo_m$) be the identity (resp. zero) matrix of degree $m$. 
If $a_1, \dots, a_m$ are square matrices, $\diag[a_1, \dots, a_m]$ denotes the matrix with $a_1, \dots, a_m$ in the diagonal blocks and $0$ in all other blocks. 
The symbol $\del((*))$ stands for either $1$ or $0$ according to the condition $(*)$ is satisfied or not. 
For an algebraic group $G$ over $\QQ$, the group of $D$-valued points, where $D$ is any $\QQ$-algebra, is denoted by $G(D)$. 


\section{\bf Preliminaries on Jacobi forms}\label{sec:1}

We discuss the notion of Jacobi forms of degree one in this section. 
We fix once and for all a positive definite symmetric even integral matrix $S$ of degree $n$. 
We assume that $S$ is maximal, i.e., there is no non-degenerate matrix $\alp\in\Mat_n(\ZZ)$ such that $\det\alp>1$ and $S[\alp^{-1}]$ is even integral. 
The Heisenberg group $H_S$ is an algebraic group over $\ZZ$, the group of $\calo$-valued points of which is given by
\[H_S(\calo)=\{[\xi, \eta, \zet]\;|\;\xi,\;\eta\in L\otimes_\ZZ\calo,\; \zet\in\calo\}\]
for any ring $\calo$ and the composition rule of which is given by
\[[\xi, \eta, \zet]\cdot[\xi\Pr, \eta\Pr, \zet\Pr]=[\xi+\xi\Pr, \eta+\eta\Pr, \zet+\zet\Pr+S(\xi, \eta\Pr)]. \]

The special linear group $\SL_2$ acts on $H_S$ by 
\[\alp^{-1}[\xi, \eta, \zet]\alp=[\xi\Pr, \eta\Pr, \zet\Pr], \]
where
\begin{align*}
(\xi\Pr, \eta\Pr)&=(\xi, \eta)\alp, & \zet\Pr&=\zet-2^{-1}S(\xi, \eta)+2^{-1}S(\xi\Pr, \eta\Pr) 
\end{align*}
for $\alp\in\SL_2$. 
We thus obtain an algebraic group $J_S=\SL_2\cdot H_S$, which is called the Jacobi group. 
We write $\frkH$ for the upper half-plane. 
The archimedean part $J_S(\RR)$ acts transitively on $\cald=\frkH\times\CC^n$ by 
\[[\xi,\eta,\zet]\alp(\tau,w)=(\alp\tau,w(c\tau+d)^{-1}+\xi\cdot\alp\tau+\eta), \]
where $\alp\tau=(a\tau+b)(c\tau+d)^{-1}$ for $\alp=\Left a & b \\ c & d\Right\in\SL_2(\RR)$. 
We define the automorphy factor $j_\kap$ on $J_S(\RR)\times\cald$ by 
\begin{multline*}
j_\kap([\xi,\eta,\zet]\alp,(\tau,w))\\=(c\tau+d)^\kap
\bfe\biggl(-\zet+\frac{cS[w]}{2(c\tau+d)}-\frac{S(\xi,w)}{c\tau+d}-\frac{S[\xi]\cdot\alp\tau}{2}\biggl). 
\end{multline*}

For a function $\phi:\cald\to\CC$, $\kap\in\ZZ$ and $\gam\in J_S(\RR)$, we define $\phi|_\kap\gam:\cald\to\CC$ by  
\[\phi|_\kap\gam(\tau,w)=j_\kap(\gam,(\tau,w))^{-1}\phi(\gam(\tau,w)). \]

Put $\vGm=\SL_2(\ZZ)H_S(\ZZ)$ and $L^*=S^{-1}L$. Let $\calt^+$ (resp. $\calt^0$) be the set of all pairs $(a, \alp)\in \ZZ\times L^*$ such that $a>S[\alp]/2$ (resp. $a=S[\alp]/2$). 

A Jacobi (resp. Jacobi cusp) form $\phi$ of weight $\kap$ with index $S$ is a holomorphic function on $\cald$ which satisfies $\phi|_\kap\gam=\phi$ for every $\gam\in\vGm$ and has a Fourier expansion of the form 
\[\phi(\tau, w)=\sum_{(a, \alp)}c_\phi(a, \alp)q^a\bfe(S(\alp, w)), \]
where $(a, \alp)$ extends over all the pairs of $\calt^0\cup\calt^+$ (resp. $\calt^+$). 

The space of Jacobi (resp. Jacobi cusp) forms of weight $\kap$ with index $S$ is denoted by 
$J_{\kap,S}$ (resp. $J^\cusp_{\kap,S}$). 

We write $\FF_q$ for a finite field with $q$ elements. Let $V_p=\FF_p^n$ and define the quadratic form $q_p$ on $V_p$ by $q_p[x]=S[x]/2\pmod p$. 
The radical $\mathrm{Rad}(V_p)$ of $(V_p, q_p)$ is the $\FF_p$-vector space which consists of all elements $x\in V_p$ such that $q_p[x]=0$ and $q_p[x+y]=q_p[x]+q_p[y]$ for every $y\in V_p$. 
Put $s_p(S)=\dim_{\FF_p}\mathrm{Rad}(V_p)$. 
Since $S$ is maximal, we have $s_p(S)\leq 2$ (cf. Lemma \ref{lem:52}, \ref{lem:71} and their proofs). We write $\frkS_i$ for the set of all rational primes $p$ such that $s_p(S)=i$.  
We denote by $d_S$ the product of prime numbers in $\frkS_2$. 

If $n$ is odd, then we put 
\begin{align*}
b_S&=\prod_{p\in\frkS_1}p, &\Lam_S&=\prod_{2\neq p\in\frkS_2}p, & 
\del_S&=\begin{cases} d_S &\tx{if $2\notin\frkS_2$. }\\
2d_S &\tx{if $2\in\frkS_2$. }
\end{cases} 
\end{align*}

Whenever $n$ is even, let $K$ be the discriminant field of $S$, i.e., the extension of $\QQ$ 
\[K=\QQ(((-1)^{n/2}\det S)^{1/2}). \]
We denote by $\frkd_S$ for the absolute value of discriminant of $K/\QQ$ and by $\chi_S$ the primitive Dirichlet character corresponding to $K/\QQ$. 
It is worth noting that $\frkS_1$ coincides with the set of prime factors of $\frkd_S$. 
We put $\xi_p(S)=\chi_S(p)$ and denote by $\Chi_S=\otimes_v\Chi_{S, v}$ the character of $\AA^\times/\QQ^\times$ corresponding to $K$. 

Put 
\[D_S=\begin{cases}
4b_S\Lam_S &\tx{if $2\nmid n$. }\\
\frkd_Sd_S &\tx{if $2|n$. }
\end{cases}\]
An easy relation   
\beq
D_S=\begin{cases}
2\del_S^{-1}\det S &\tx{if $2\nmid n$ }\\
d_S^{-1}\det S &\tx{if $2|n$ }
\end{cases}\label{tag:11}
\eeq
follows from the classification of maximal $\ZZ_p$-integral lattices (cf. \cite[\S 9]{E}). 
We put
\[D_{a,\alp}=D_S(a-S[\alp]/2) \]
for $(a,\alp)\in\ZZ\times L^*$. 
When $n$ is odd, we put 
\begin{align*}
\Del_{a,\alp}&=\det S_{a,\alp}, & S_{a,\alp}&=\mtrx{S}{S\alp}{\trs\alp S}{2a}. 
\end{align*}

Let $p$ be a rational prime, $(\;,\;)_p$ the Hilbert symbol over $\QQ_p$ and $B$ a non-degenerate symmetric matrix of size $r$ with entries in $\QQ_p$. 
Recall that the isometry classes of non-degenerate quadratic spaces over $\QQ_p$ are characterized by their dimension, determinant and Hasse invariant. 
Note that $\det B\in \QQ_p^\times/\QQ_p^{\times2}$ and the character
\[\Chi_B(x)=(x,(-1)^{r(r+1)/2}\det B)_p\]
determine each other uniquely. 
If $r>2$ and a quadratic character $\Chi$ are fixed, then there are precisely two quadratic forms $B$ of rank $r$ with $\Chi_B=\Chi$, having opposite Hasse invariants. 
When $r=2$, there are again two forms if $\Chi\neq 1$ and only one (the split form) if $\Chi=1$. 

\begin{definition}\label{def:11}
We normalize our Hasse invariant $\eta_p(B)$ so that it depends only on the isometry class of an anisotropic kernel of $B$. 
More precisely, we can define $\eta_p(B)$ by the following conditions:  
\begin{enumerate}
\renewcommand\labelenumi{(\theenumi)}
\item[(\roman{one})] If $r$ is odd, then an anisotropic kernel of $B$ has dimension $2-\eta_p(B)$. 
\item[(\roman{two})] If $r$ is even and $\Chi_B\neq 1$, and if we choose an element $\alp\in\QQ_p^\times$ such that $\Chi_B(\alp)=\eta_p(B)$, then $B$ is the orthogonal sum of a split form of dimension $r-2$ with a norm form scaled by the factor $\alp$ on the quadratic extension $\QQ_p(((-1)^{r/2}\det B)^{1/2})$. 
\item[(\roman{thr})] When $r$ is even and $\Chi_B=1$, then $B$ is split or quaternionic according as $\eta_p(B)=1$ or $\eta_p(B)=-1$. 
We here call $B$ quaternionic if $B$ is the orthogonal sum of a quaternion norm form with a split form of dimension $r-4$. 
\end{enumerate}
\end{definition}

Put $\GL_2(\RR)^+=\{\alp\in\GL_2(\RR)\;|\;\det\alp>0\}$. 
For $\ell\in\ZZ$, $\alp=\Left a & b \\ c & d\Right\in\GL_2(\RR)^+$ and a function $f:\frkH\to\CC$, we define $f\|_{\ell/2}\alp:\frkH\to\CC$ by 
\[f\|_{\ell/2}\alp(\tau)=(\det\alp)^{\ell/4}(c\tau+d)^{-\ell/2}f(\alp\tau). \]
For $\gam\in J_S(\RR)$ and a function $\phi:\cald\to\CC$ we define $\phi\|_{\ell/2}\gam:\cald\to\CC$ similarly. 
We drop the subscript $\ell/2$ when there is no fear of confusion. 

We write $\cals(X_\bff)$ for the Schwartz-Bruhat space on $X_\bff=X\otimes_\QQ\AAf$. 
Given $l\in \cals(X_\bff)$, we define the theta function $\vth^S_l(\tau, w)$ by
\[\vth^S_l(\tau, w)=\sum_{\alp\in X}l(\alp)\bfe(\tau S[\alp]/2+S(\alp, w)). \]
Choose a complete representative $\Xi$ for $L^*/L$. We write $l_\mu$ for the characteristic function of the closure of $\mu+L$ in $X_\bff$ for $\mu\in\Xi$. 
To simplify notation, we put $\vth^S_\mu=\vth^S_{l_\mu}$. 

For the proof of the following lemma, see \cite[Lemma 4.9.1]{Mi}. 

\begin{lemma}\label{lem:11}
Put $J=\onon$. For each $\mu\in\Xi$, we have
\[\vth^S_\mu\|_{n/2}J=(\det S)^{-1/2}\bfe(-n/8)\sum_{\nu\in\Xi}\bfe(S(\mu,\nu))\vth^S_\nu. \]
\end{lemma}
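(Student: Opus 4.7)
The plan is to prove the transformation formula via Poisson summation on the shifted lattice $\mu+L$, coupled with the explicit evaluation of a Gaussian Fourier integral; this is the classical route to such theta transformation laws. First I would unfold the slash action: since $J=\onon$ has $c=1$ and $d=0$, the Jacobi automorphy factor evaluates to $j_{n/2}(J,(\tau,w))=\tau^{n/2}\bfe(S[w]/(2\tau))$, whence
\[\vth^S_\mu\|_{n/2}J(\tau,w)=\tau^{-n/2}\bfe(-S[w]/(2\tau))\sum_{\alp\in\mu+L}\bfe\Bigl(-\frac{S[\alp]}{2\tau}+\frac{S(\alp,w)}{\tau}\Bigr).\]
Setting $f(x)=\bfe(-S[x]/(2\tau)+S(x,w)/\tau)$, the inner sum becomes $\sum_{\alp\in L}f(\mu+\alp)$.

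Next I would apply Poisson summation for $L=\ZZ^n$, yielding
\[\sum_{\alp\in L}f(\mu+\alp)=\sum_{\xi\in\ZZ^n}\hat f(\xi)\bfe(\mu^T\xi),\]
and reparametrize $\xi=S\nu$ with $\nu\in S^{-1}\ZZ^n=L^*$. Under this change of variable $\mu^T\xi=S(\mu,\nu)$, so the dual sum becomes a sum over $L^*$ with twists $\bfe(S(\mu,\nu))$. To evaluate $\hat f(S\nu)$ I would complete the square in the Gaussian exponent, reducing matters to $\int_{\RR^n}\bfe(-S[y]/(2\tau))\,dy$. For $\tau=it$ with $t>0$ this is a real Gaussian equal to $t^{n/2}(\det S)^{-1/2}$; analytic continuation in $\tau\in\frkH$ using the branch convention of the introduction produces $\bfe(-n/8)(\det S)^{-1/2}\tau^{n/2}$. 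Completing the square contributes a residual phase $\bfe(S[w]/(2\tau)-S(\nu,w)+\tau S[\nu]/2)$, so that the $\tau^{n/2}$ and $\bfe(S[w]/(2\tau))$ factors exactly cancel those from the unfolded slash.

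Finally I would collect terms according to cosets of $L^*/L$. Writing $\nu=\nu_0+\lam$ with $\nu_0\in\Xi$ and $\lam\in L$, the relation $S(\mu,\lam)\in\ZZ$ (valid because $\mu\in L^*$ and $\lam\in L$) makes $\bfe(S(\mu,\nu))$ depend only on $\nu_0$; summing the remaining Gaussian factor over $\lam$ rebuilds $\vth^S_{\nu_0}(\tau,w)$ and delivers the identity of the lemma. The principal technical point is the branch of $\tau^{n/2}$ when $n$ is odd, which must be controlled by analytic continuation of the Gaussian integral from the imaginary axis using the convention fixed in the introduction; this is precisely what produces the factor $\bfe(-n/8)$. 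The remaining work is bookkeeping: matching the lattice identifications between $L$, $L^*$, and $\ZZ^n$ via the symmetric form $S$, which under $\xi=S\nu$ realigns the standard inner product with $S(\cdot,\cdot)$.
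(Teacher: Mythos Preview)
Your approach is correct: Poisson summation on the shifted lattice $\mu+L$, evaluation of the Gaussian Fourier integral by analytic continuation from the imaginary axis (which is exactly where the eighth-root factor $\bfe(-n/8)$ arises), and regrouping the dual sum over $L^*$ into cosets modulo $L$ is the classical route to this identity, and the technical points you flag (branch of $\tau^{n/2}$, the contour shift needed because $w$ is complex) are the right ones to watch.

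As for comparison with the paper: there is nothing to compare, because the paper does not give a proof of this lemma at all---it simply refers the reader to \cite[Lemma~4.9.1]{Mi}. The argument you sketch is essentially the one found in that reference, so you have supplied what the paper outsources. One small bookkeeping remark: in your final regrouping you should verify that the sign in the linear phase $\bfe(\pm S(\nu,w))$ and the sign in the character $\bfe(\pm S(\mu,\nu))$ come out consistently with the stated formula; this amounts to the harmless reindexing $\nu\mapsto -\nu$ on $L^*$ (using that $-\Xi$ is again a set of coset representatives and $\vth^S_{-\nu_0}=\vth^S_{\nu_0'}$ for the appropriate $\nu_0'\in\Xi$), but it is worth writing out once so that the final identity matches the lemma exactly.
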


For a rational number $N$, we denote by $\psi_N$ the primitive Dirichlet character corresponding to $\QQ(\sqrt{N})/\QQ$. 

\begin{proposition}\label{prop:11}
Let $\gam=\Left a & b \\ c & d\Right\in\Gam_0(D_S)$. 
\begin{enumerate}
\renewcommand\labelenumi{(\theenumi)}
\item If $n$ is odd, then 
\[j(\gam,\tau)^{-n}\bfe(-c(c\tau+d)^{-1}S[w]/2)\vth^S_0(\gam(\tau,w))=\biggl(\frac{4b_S}{d}\biggl)\vth^S_0(\tau,w). \]
For the definition of $j(\gam,\tau)$, see \S \ref{sec:2}.  
\item If $n$ is even, then 
\[\vth^S_0|_{n/2}\gam(\tau,w)=\chi_S(d)\vth^S_0(\tau,w). \]
\end{enumerate}
\end{proposition}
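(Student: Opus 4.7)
The plan is to reduce Proposition \ref{prop:11} to Lemma \ref{lem:11} by expressing $\gam \in \Gam_0(D_S)$ in terms of the generators $T = \Left 1 & 1 \\ 0 & 1 \Right$ and $J$ of $\SL_2(\ZZ)$, and tracking the action on the vector-valued collection $\{\vth^S_\mu\}_{\mu \in \Xi}$. Lemma \ref{lem:11} supplies the action of $J$; the action of $T$ is the diagonal operator $\vth^S_\mu \mapsto \bfe(S[\mu]/2)\vth^S_\mu$, immediate from the $q$-expansion. Together these determine the $\SL_2(\ZZ)$-representation on $\CC[\Xi]$ (the finite Weil representation attached to $(L, S)$), and Proposition \ref{prop:11} is the assertion that, restricted to $\Gam_0(D_S)$, the $\mu = 0$ component is an eigenvector with the claimed character.

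First I would dispose of the easy cases. For $\gam = T^b$ the statement reduces to $\vth^S_0(\tau + b, w) = \vth^S_0(\tau, w)$, which holds because $L$ is even integral with respect to $S$. For $\gam = -\ono_2$ the invariance follows from $\alp \leftrightarrow -\alp$, compatibly with $\chi_S(-1) = (-1)^{n/2}$ in the even case and with the half-integral automorphy factor in the odd case. These reduce the proposition to the case $\gam = \Let a & b \\ c & d \Rit$ with $c > 0$.

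For such $\gam$ I would apply a Bruhat-style reduction: write $\gam$, up to $T$-translations on either side, as a product involving $J$ and an element with strictly smaller lower-left entry, and iterate. Invoking Lemma \ref{lem:11} at each $J$ and using the cocycle property of the slash action produces an identity of the form
\[\vth^S_0\|_{n/2}\gam \;=\; \sum_{\nu \in \Xi} G_\nu(\gam)\,\vth^S_\nu,\]
where each coefficient $G_\nu(\gam)$ is a Gauss sum over $\Xi$ whose kernel depends on $a, b, c, d$. The hypothesis $D_S \mid c$ makes the character orthogonality on $\Xi$ sharp enough to kill the terms with $\nu \neq 0$, leaving a single coefficient $G_0(\gam)$.

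The main obstacle is evaluating $G_0(\gam)$ and matching it with $\chi_S(d)$ in the even case, or with $\bigl(\tfrac{4b_S}{d}\bigr)$ together with the half-integral automorphy factor $j(\gam,\tau)^n$ and the Heisenberg phase $\bfe(cS[w]/(2(c\tau+d)))$ in the odd case. Since $\gcd(c, \det S)$ may be nontrivial, this evaluation has to be carried out prime by prime: at each $p \mid D_S$ one exploits the explicit structure of the maximal $\ZZ_p$-integral lattice recalled in the derivation of (\ref{tag:11}), together with the invariants $\eta_p(S)$ (odd case) or $\xi_p(S)$ (even case), to reduce the local Gauss sum to a classical one. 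The product of these local factors, combined via quadratic reciprocity, is precisely the Kronecker symbol $\bigl(\tfrac{4b_S}{d}\bigr)$ or the Dirichlet character $\chi_S(d)$. In the odd case, the non-algebraic prefactors $(\det S)^{-1/2}\bfe(-n/8)|c|^{-n/2}$ that accumulate from repeated applications of Lemma \ref{lem:11} consolidate, via Shimura's sign conventions, into exactly the factor $j(\gam,\tau)^n$ appearing in the statement.
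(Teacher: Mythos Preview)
Your approach is valid in outline but takes a genuinely different route from the paper. The paper's proof is a two-line citation: it observes that $D_S S^{-1}$ is even integral (this is the content of (\ref{tag:11})) and that $\psi_{2\det S}=\psi_{4b_S}$, and then invokes the classical theta transformation law as stated in Miyake's textbook (Theorem 4.9.3 and Corollary 4.9.5 of \cite{Mi}), which already packages the level, the half-integral automorphy factor, and the character. In other words, the paper treats Proposition \ref{prop:11} as a direct specialization of a standard black box.

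What you propose instead is to rederive that black box from the finite Weil representation on $\CC[\Xi]$, using Lemma \ref{lem:11} for $J$ and the obvious diagonal action for $T$, then tracking a word in $T,J$ for a general $\gam\in\Gam_0(D_S)$ and collapsing the resulting Gauss sums. This is the classical Hermite--Weber--Shimura computation that underlies Miyake's theorem in the first place, so your strategy is sound; it also connects transparently to the adelic Weil-representation machinery the paper uses later in \S\ref{sec:5} and \S\ref{sec:7}. The cost is that the steps you label ``evaluate the local Gauss sums prime by prime'' and ``consolidate the prefactors into $j(\gam,\tau)^n$ via Shimura's sign conventions'' are exactly where all the work lies, and your proposal only gestures at them. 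Note too that the orthogonality you invoke to kill the $\nu\neq 0$ terms is precisely the statement that $D_S S^{-1}$ is even integral---the same hypothesis the paper isolates---so you should make that explicit. If you want a self-contained argument, you will need to actually carry out the Gauss-sum evaluation (or at least reduce it cleanly to quadratic Gauss sums over $\ZZ/p^e\ZZ$ using the local normal forms of $S$ from the proofs of Lemmas \ref{lem:52} and \ref{lem:71}); otherwise, citing \cite{Mi} as the paper does is both shorter and complete.
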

\begin{proof}
Note that $\psi_{2\det S}=\psi_{4b_S}$. 
Since $D_S S^{-1}$ is an even integral symmetric matrix, we can apply Corollary 4.9.5, Theorem 4.9.3 of \cite{Mi} and its remark to the theta constant $\vth^S_0(\tau,0)$, which proves to be a modular form of weight $n/2$ with respect to $\Gam_0(D_S)$. 

The proof is applicable to $\vth^S_0(\tau,w)$ itself. 
\end{proof}

A Jacobi form $\phi$ can be expressed as a sum
\begin{align}
\phi(\tau, w)&=\sum_{\mu\in\Xi}\phi_\mu(\tau)\vth^S_\mu(\tau, w), \label{tag:12}\\
\phi_\mu(\tau)&=\sum_ac_\phi(a, \mu)\bfe((a-S[\mu]/2)\tau) \label{tag:13}
\end{align}  
as a simple consequence of the invariance of $\phi$ with respect to $H_S(\ZZ)$. 

The following Lemma is a special case of \cite[Theorem 3.3]{Z}. 

\begin{lemma}\label{lem:12}
Let $\phi$ be a holomorphic function on $\cald$ which admits a expansion of the form (\ref{tag:12}), (\ref{tag:13}). Then $\phi\in J_{\kap,S}$ if and only if 
\[\phi_\mu\|_{\kap-n/2}J=(\det S)^{-1/2}\bfe(n/8)\sum_{\nu\in\Xi}\bfe(-S(\mu,\nu))\phi_\nu\]
for every $\mu\in\Xi$. Here we put $J=\onon$. 
\end{lemma}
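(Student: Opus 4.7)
The plan is to reduce the lemma to the single condition $\phi|_\kap J = \phi$, compute that condition explicitly using Lemma \ref{lem:11}, and then extract the stated identity on the theta components by Fourier inversion on $L^*/L$. Since $\vGm = \SL_2(\ZZ)\cdot H_S(\ZZ)$ and $\SL_2(\ZZ)$ is generated by $J$ together with $T=\Left 1 & 1 \\ 0 & 1\Right$, it suffices to verify $\phi|_\kap\gam = \phi$ on $H_S(\ZZ) \cup \{T,J\}$. The expansion (\ref{tag:12}), (\ref{tag:13}) already handles $H_S(\ZZ)$ and $T$: for $h = [\xi,\eta,\zet] \in H_S(\ZZ)$, the change of variable $\beta = \alp + \xi$ inside each $\vth^S_\mu$, together with $S(\alp,\eta)\in\ZZ$ (since $S\alp\in L$ for $\alp\in L^*=S^{-1}L$), yields
\[
\vth^S_\mu(\tau,w+\xi\tau+\eta) = \bfe(-\tau S[\xi]/2 - S(\xi,w))\,\vth^S_\mu(\tau,w),
\]
and this cancels $j_\kap(h,(\tau,w))^{-1}$ modulo $\bfe(\zet)=1$; for $T$, invariance follows from $a\in\ZZ$ in (\ref{tag:13}) and the fact that $(S[\alp]-S[\mu])/2\in\ZZ$ for $\alp\in\mu+L$.

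It therefore suffices to analyze $\phi|_\kap J$. Writing $J(\tau,w)=(-1/\tau,w/\tau)$ and observing that the Heisenberg factor $\bfe(S[w]/(2\tau))$ appearing in $j_\kap(J,(\tau,w))$ is shared with the analogous factor arising in the definition of $\vth^S_\mu\|_{n/2}J$, the powers of $\tau$ reassemble so that
\[
\phi|_\kap J(\tau,w) = \sum_{\mu\in\Xi}(\phi_\mu\|_{\kap-n/2}J)(\tau)\cdot(\vth^S_\mu\|_{n/2}J)(\tau,w).
\]
Inserting Lemma \ref{lem:11} on the right and reindexing the double sum yields
\[
\phi|_\kap J(\tau,w) = (\det S)^{-1/2}\bfe(-n/8)\sum_{\nu\in\Xi}\Bigl(\sum_{\mu\in\Xi}\bfe(S(\mu,\nu))\,(\phi_\mu\|_{\kap-n/2}J)(\tau)\Bigr)\vth^S_\nu(\tau,w).
\]

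Since the theta series $\{\vth^S_\nu\}_{\nu\in\Xi}$ use disjoint cosets $\nu+L\subset L^*$ in their $w$-expansions, they are linearly independent as functions of $w$ over the ring of holomorphic functions of $\tau$. Matching the coefficient of each $\vth^S_\nu$, the condition $\phi|_\kap J=\phi$ becomes
\[
\phi_\nu = (\det S)^{-1/2}\bfe(-n/8)\sum_{\mu\in\Xi}\bfe(S(\mu,\nu))\,(\phi_\mu\|_{\kap-n/2}J).
\]
Inverting this relation by the orthogonality $\sum_{\nu\in\Xi}\bfe(S(\nu,\mu'-\mu))=(\det S)\,\del(\mu'=\mu)$ with $|\Xi|=\det S$ recovers precisely the formula claimed in the lemma, and the same calculation run backwards gives the converse implication. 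The main technical care is the bookkeeping of the half-integral power $\tau^{\kap-n/2}$ when $n$ is odd; the branch convention $-\pi/2<\arg z^{1/2}\leq\pi/2$ fixed in the Notation guarantees $\tau^\kap=\tau^{\kap-n/2}\cdot\tau^{n/2}$ on $\frkH$, so the various slash operators compose consistently and the Heisenberg factors cancel on the nose rather than merely up to an eighth root of unity.
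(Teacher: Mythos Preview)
Your argument is correct and supplies precisely the details that the paper omits: the paper does not give a proof but simply cites \cite[Theorem 3.3]{Z} and remarks that the ``only if'' direction is clear from Lemma~\ref{lem:11}, which is exactly the input you use. Your reduction to the single generator $J$ (after checking that the expansion (\ref{tag:12}), (\ref{tag:13}) already encodes invariance under $H_S(\ZZ)$ and $T$), the factorization $\phi|_\kap J=\sum_\mu(\phi_\mu\|_{\kap-n/2}J)(\vth^S_\mu\|_{n/2}J)$, and the Fourier inversion on $L^*/L$ together constitute the standard direct proof, so nothing further is needed.
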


We remark that the ``only if'' part is clear by Lemma \ref{lem:11}. 

\begin{definition}\label{def:12}
A Jacobi form $\phi\in J_{\kap,S}$ is an element of $J^M_{\kap,S}$ if it admits a Fourier expansion of the form
\[\phi(\tau,w)=\sum_{(a,\alp)\in \calt^0\cup\calt^+}c_\phi(D_{a,\alp})q^a\bfe(S(\alp,w))\] 
for some function $c_\phi:\NN\cup\{0\}\to \CC$.  
Put $J^{\cusp,M}_{\kap,S}=J^M_{\kap,S}\cap J^\cusp_{\kap,S}$. 
\end{definition}

\begin{remark}\label{rem:11}
Note that $\kap$ must be an even integer for there to be any non-zero $\phi\in J^M_{\kap,S}$. 
Therefore, whenever we consider the space $J^M_{\kap,S}$, we assume that $\kap$ is even. 
\end{remark}

The following lemma easily follows from (\ref{tag:13}). 

\begin{lemma}\label{lem:13}
Let $\phi\in J_{\kap,S}$. 
The following conditions are equivalent.
\begin{enumerate}
\renewcommand\labelenumi{(\theenumi)}
\item[(\roman{one})] $\phi\in J^M_{\kap,S}$. 
\item[(\roman{two})] $\phi_\mu=\phi_\nu$ if $S[\mu]/2-S[\nu]/2\in\ZZ$.  
\end{enumerate} 
\end{lemma}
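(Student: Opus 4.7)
The plan is to translate both conditions directly into statements about the Fourier coefficients $c_\phi(a,\mu)$ via the expansion (\ref{tag:13}), and then match them up using the defining identity $D_{a,\mu}=D_S(a-S[\mu]/2)$.

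First I would restate (i) as: there exists a function $c_\phi:\NN\cup\{0\}\to\CC$ such that $c_\phi(a,\mu)$ depends only on $D_{a,\mu}$ for every $(a,\mu)\in\calt^0\cup\calt^+$. Indeed, by Definition \ref{def:12}, $\phi\in J^M_{\kap,S}$ means exactly $c_\phi(a,\mu)=c_\phi(D_{a,\mu})$ for a suitable function on $\NN\cup\{0\}$.

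For the implication (i)$\Rightarrow$(ii), fix $\mu,\nu\in\Xi$ with $S[\mu]/2-S[\nu]/2\in\ZZ$ and perform in (\ref{tag:13}) the re-indexing $a\mapsto a'=a-(S[\mu]/2-S[\nu]/2)$. This substitution preserves integrality, gives $a'-S[\nu]/2=a-S[\mu]/2$, and hence $D_{a',\nu}=D_{a,\mu}$. By (i), $c_\phi(a',\nu)=c_\phi(a,\mu)$, while the exponential factor $\bfe((a-S[\mu]/2)\tau)=\bfe((a'-S[\nu]/2)\tau)$ is unchanged. Comparing expansions gives $\phi_\mu=\phi_\nu$.

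For the converse (ii)$\Rightarrow$(i), suppose $(a,\mu),(a',\nu)\in\calt^0\cup\calt^+$ satisfy $D_{a,\mu}=D_{a',\nu}$. Then $a-S[\mu]/2=a'-S[\nu]/2$, so $S[\mu]/2-S[\nu]/2=a-a'\in\ZZ$. By (ii), $\phi_\mu=\phi_\nu$, and matching the coefficient of $\bfe((a-S[\mu]/2)\tau)=\bfe((a'-S[\nu]/2)\tau)$ in their Fourier expansions yields $c_\phi(a,\mu)=c_\phi(a',\nu)$. Thus $c_\phi(a,\mu)$ depends only on $D_{a,\mu}$, proving $\phi\in J^M_{\kap,S}$.

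There is no real obstacle here; the entire content is the bookkeeping observation that $D_{a,\mu}$ depends on the pair $(a,\mu)$ only through the real number $a-S[\mu]/2$, and that two different $\mu,\nu\in\Xi$ can yield the same value only when $S[\mu]/2-S[\nu]/2$ is an integer, which is precisely the compatibility condition needed to identify the two Fourier series in a single variable $\tau$.
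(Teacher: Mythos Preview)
Your proof is correct and follows exactly the approach the paper intends: the paper simply states that the lemma ``easily follows from (\ref{tag:13})'', and you have spelled out precisely that bookkeeping. The only minor omission is that condition (i) in Definition~\ref{def:12} is stated for all $\alpha\in L^*$, not just representatives $\mu\in\Xi$, but this is harmless since $c_\phi(a,\alpha)=c_\phi(a',\mu)$ with $D_{a,\alpha}=D_{a',\mu}$ automatically whenever $\alpha\equiv\mu\pmod L$.
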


It is important to note that we can recover $\phi$ from $\phi_0$ if $\phi\in J^M_{\kap,S}$. 
More precisely, we have 
\begin{lemma}\label{lem:14}
Let $\phi\in J^M_{\kap,S}$. 
Put 
\beq
h=(\det S)^{1/2}\bfe(-n/8)D_S^{-k/2-1/4}\phi_0\|J\mtrx{D_S}{}{}{1}. \label{tag:14}
\eeq
For an integer $\ell$, we put 
\[\bfa_S(\ell)=\sharp\{\mu\in\Xi\;|\;\ell\equiv -D_SS[\mu]/2\pmod{D_S}\}. \] 
Then 
\beq
\phi_\mu=D_S^{k/2-3/4}\bfa_S(D_{0,\mu})^{-1}\sum_{j=0}^{D_S-1}\bfe(jS[\mu]/2)h\|\mtrx{1}{j}{0}{D_S}. \label{tag:15}
\eeq
\end{lemma}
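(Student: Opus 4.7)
The plan is to verify the identity by expanding both sides as Fourier series in $\tau$ and matching coefficients. The structural input comes from Lemma \ref{lem:13}: since $\phi\in J^M_{\kap,S}$, the coefficient $c_\phi(a,\mu)$ depends only on $D_{a,\mu}=D_S(a-S[\mu]/2)$, so writing $c_\phi(N)$ for that common value one has
\[\phi_\mu(\tau)=\sum_{\substack{N\geq 0\\ N\equiv -D_SS[\mu]/2\pmod{D_S}}}c_\phi(N)\bfe((N/D_S)\tau).\]
Thus the task reduces to checking that the right side of (\ref{tag:15}) reproduces this series.

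First I would compute the Fourier expansion of $h$. Lemma \ref{lem:12} with $\mu=0$ gives $\phi_0\|J=(\det S)^{-1/2}\bfe(n/8)\sum_{\nu\in\Xi}\phi_\nu$, and the scalar $(\det S)^{1/2}\bfe(-n/8)$ in (\ref{tag:14}) is chosen precisely to cancel this constant. The action of $\mtrx{D_S}{}{}{1}$ rescales $\tau\mapsto D_S\tau$ and contributes a $D_S$-power from the automorphy factor, converting each series $\sum_m c_\phi(m)\bfe((m/D_S)\tau)$ into one indexed by integers. Summing over $\nu\in\Xi$, the integer $N$ appears with multiplicity equal to $\bfa_S(N)$ by the very definition of $\bfa_S$, yielding
\[h(\tau)=D_S^{-k/2-1/4+(2\kap-n)/4}\sum_{N\geq 0}\bfa_S(N)c_\phi(N)\bfe(N\tau).\]

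Next I would apply the operator $\sum_{j=0}^{D_S-1}\bfe(jS[\mu]/2)(\cdot)\|\mtrx{1}{j}{0}{D_S}$. The slash action replaces $\bfe(N\tau)$ by $D_S^{-(2\kap-n)/4}\bfe(N\tau/D_S)\bfe(Nj/D_S)$, and the character sum $\sum_{j=0}^{D_S-1}\bfe(j(N/D_S+S[\mu]/2))$ equals $D_S$ exactly when $N\equiv -D_SS[\mu]/2\pmod{D_S}$ and vanishes otherwise, isolating precisely the residue class appearing in $\phi_\mu$. For surviving $N$ one has $\bfa_S(N)=\bfa_S(D_{0,\mu})$, since $D_{0,\mu}=-D_SS[\mu]/2$, and this factor cancels the $\bfa_S(D_{0,\mu})^{-1}$ in (\ref{tag:15}).

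What remains is to check that the accumulated powers of $D_S$ --- namely $-k/2-1/4+(2\kap-n)/4$ from the normalization of $h$, $-(2\kap-n)/4$ from the second slash action, $+1$ from the character sum, and $+k/2-3/4$ from the prefactor in (\ref{tag:15}) --- sum to $0$. The two occurrences of $(2\kap-n)/4$ cancel, and the remainder $-k/2-1/4-3/4+k/2+1=0$ uniformly in the parity of $n$. The surviving expression matches $\phi_\mu(\tau)$ term by term. The main, and really only, obstacle is this careful bookkeeping of the half-integer $D_S$-powers arising from the automorphy factors; no deeper tool is needed.
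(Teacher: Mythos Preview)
Your proof is correct and follows essentially the same path as the paper's. The paper's argument is just the abstract version of yours: it notes that $\phi_\nu\|\mtrx{1}{1}{0}{1}=\bfe(-S[\nu]/2)\phi_\nu$ and that $h=D_S^{-k/2-1/4}\sum_{\nu\in\Xi}\phi_\nu\|\mtrx{D_S}{}{}{1}$ (from Lemma~\ref{lem:12}), then applies the averaging operator directly to the functions $\phi_\nu$ rather than to their Fourier expansions---the character sum then picks out exactly those $\nu$ with $S[\mu]/2-S[\nu]/2\in\ZZ$, and Lemma~\ref{lem:13} gives $\phi_\nu=\phi_\mu$ for each of them, the count being $\bfa_S(D_{0,\mu})$. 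Your expansion into Fourier coefficients in $N$ is the term-by-term realization of that same computation, with identical bookkeeping of $D_S$-powers.
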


\begin{proof}
Observing that $\phi_\mu\|\Left 1 & 1 \\ 0 & 1\Right=\bfe(-S[\mu]/2)\phi_\mu$ and 
\[h=D_S^{-k/2-1/4}\sum_{\mu\in\Xi}\phi_\mu\|\mtrx{D_S}{}{}{1}, \]
we can conclude our lemma by an immediate computation. 
\end{proof}

For basic facts on the Hecke algebra acting on Jacobi forms we refer the reader to \cite{Su}. 
\begin{definition}\label{def:13}
For a Hecke eigenform $\phi\in J^\cusp_{\kap,S}$, the definition of its $L$-function $L(\phi,s)$ is given by 
\[L(\phi,s)=\prod_pP_{S,\lam_\phi,p}(p^{-s})^{-1}, \]
where $P_{S,\lam_\phi,p}(p^{-s})$ is the full denominator of the local $L$-factor introduced in \cite[(2.13)]{Su} (even though its factors can be canceled with those in the numerator). 
\end{definition}

For later use, we record the following proposition. 

\begin{proposition}\label{prop:12}
If $\phi\in J^\cusp_{\kap,S}$ is a Hecke eigenform and if $(a,\alp)\in\calt^+$ is such that $S^\sim=S_{a,\alp}$ is maximal, then 
\begin{multline*}
\sum_{m=1}^\infty c_\phi(am^2,\alp m)m^{-(s+\kap-1-n/2)}
\prod_p\frac{B_{S^\sim,p}(p^{-s-1/2})}{B_{S,p}(p^{-s})}\\
=c_\phi(a,\alp)L(\phi,s)\times
\begin{cases}
L\bigl(s+\thalf,\psi_{(-1)^{(n+1)/2}\Del_{a,\alp}}\bigl)^{-1} &\tx{if $2\nmid n$. }\\
\zet(2s)^{-1} &\tx{if $2|n$. }
\end{cases}
\end{multline*}
Here, we define $B_{S,p}$ by 
\[B_{S,p}(X)=\begin{cases}
1 &\tx{if $p\in\frkS_0$, or $p\in\frkS_1$, $2|n$ }\\
1+\eta_p(S)p^{1/2}X &\tx{if $p\in\frkS_1$, $2\nmid n$ }\\
1-pX^2 &\tx{if $p\in\frkS_2$, $2\nmid n$ }\\
(1-\xi_p(S)pX)(1-\xi_p(S)X) &\tx{if $p\in\frkS_2$, $2|n$ }
\end{cases}\]
and define $B_{S^\sim,p}$ similarly. 
\end{proposition}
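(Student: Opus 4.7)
The plan is to establish the identity locally at each prime $p$---since both sides are Euler products up to the global character factor---using the explicit description of the action of the local Jacobi-Hecke algebra on Fourier coefficients from Sugano \cite{Su}. The proposition is the Jacobi-form analogue of Shimura's classical formula expressing $\sum_m c_g(tm^2)m^{-s}$ in terms of the $L$-function of a half-integral weight eigenform, adapted to the setting where the ``twist parameter'' is not a scalar index but the pair $(a,\alp)$.

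Fix a prime $p$ and introduce the local generating function
\[\Phi_p(X)=\sum_{j\geq 0}c_\phi(ap^{2j},\alp p^j)X^j.\]
The pairs $(ap^{2j},\alp p^j)$ sweep out a single orbit under a diagonal torus element of $\SL_2(\QQ_p)\subset J_S(\QQ_p)$ acting on Fourier indices, so the coefficients along this orbit transform linearly under the local Hecke algebra at $p$. I would extract the resulting matrix action from \cite[(2.13)]{Su}; the Hecke eigenvalue equation $T\phi=\lam_\phi(T)\phi$ then gives a linear recurrence for $\{c_\phi(ap^{2j},\alp p^j)\}_{j\geq 0}$, from which $\Phi_p(X)$ is computed as a rational function of $X$. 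Matching denominators produces the polynomial $B_{S^\sim,p}(p^{-s-1/2})P_{S,\lam_\phi,p}(p^{-s-\kap+1+n/2})$, while the numerator factors as $c_\phi(a,\alp)B_{S,p}(p^{-s})$ times a local correction polynomial whose shape depends on the parity of $n$.

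The claimed global character factor then arises from multiplying these local correction polynomials over all $p$. When $n$ is odd, each local factor should read $1-\psi_{(-1)^{(n+1)/2}\Del_{a,\alp}}(p)p^{-s-1/2}$, contributing $L(s+\thalf,\psi_{(-1)^{(n+1)/2}\Del_{a,\alp}})^{-1}$ globally; when $n$ is even, each reads $1-p^{-2s}$, contributing $\zet(2s)^{-1}$. The maximality hypothesis on $S^\sim$ is crucial here: it guarantees that at every prime the local orbit of $(a,\alp)$ is the ``generic'' one, so the recurrence holds without additional low-order corrections, and the local density factor $B_{S^\sim,p}$ appears cleanly in the denominator without spurious prime-dependent modifications.

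The main obstacle is the case analysis needed to carry out the computation uniformly: there are six cases indexed by $s_p(S)\in\{0,1,2\}$ and the parity of $n$, each with its own shape for $B_{S,p}$, $B_{S^\sim,p}$, and $P_{S,\lam_\phi,p}$. In each case the numerator of $\Phi_p$ must be identified with the asserted correction factor by direct comparison of local Hecke polynomials, and the value $\psi_{(-1)^{(n+1)/2}\Del_{a,\alp}}(p)$ must be matched against the local invariants $\eta_p(S^\sim)$ or $\xi_p(S^\sim)$ of $S^\sim$ at $p$, which is where the bulk of the bookkeeping lies. Once this is in hand, assembling the global Euler product is routine.
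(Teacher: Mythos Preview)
Your outline is a reasonable sketch of how one would actually establish such an identity from the local Hecke theory, but the paper itself does not carry out any of this: its entire proof is the single sentence ``This is a restatement of \cite[Proposition 4.2]{Su}.'' In other words, the paper treats the proposition as a direct translation of Sugano's result into the present notation, with no independent argument.

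Your plan---local generating functions $\Phi_p(X)$, recurrences from the Hecke eigenvalue equations, and case-by-case identification of numerators and denominators---is essentially a description of what underlies Sugano's Proposition~4.2. So you are not wrong, but you are reproving the cited input rather than matching what the paper does. If you want your write-up to align with the paper, the honest move is simply to invoke \cite[Proposition 4.2]{Su} and, if desired, indicate briefly how the polynomials $B_{S,p}$, $B_{S^\sim,p}$ and the correction factors correspond to Sugano's notation.
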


This is a restatement of \cite[Proposition 4.2]{Su}. 


\section{\bf The modular forms of half-integral weight}\label{sec:2} 

We recall some basic facts about the modular forms of half-integral weight. 
We refer to \cite{UY, Ko, U} for detail. 
Fix a positive integer $k$. 
The set $\frkG$ consists of all pairs $(\gam,\phi(\tau))$, where $\gam=\Left a & b \\ c & d\Right\in\GL_2(\RR)^+$ and $\phi(\tau)$ is a holomorphic function on $\frkH$ satisfying 
\[|\phi(\tau)|=(\det\gam)^{-k/2-1/4}|c\tau+d|^{k+1/2}. \] 
We define the group law of $\frkG$ by
\[(\gam_1,\phi_1(\tau))\cdot(\gam_2,\phi_2(\tau))=(\gam_1\gam_2, \phi_1(\gam_2\tau)\phi_2(\tau)). \]  
For a function $g:\frkH\to\CC$ and for $\alp=(\gam,\phi(\tau))\in\frkG$, we put
\[g|\alp(\tau)=\phi(\tau)^{-1}g(\gam\tau). \] 

There exists an injective homomorphism $\Gam_0(4)\to\frkG$ given by 
$\gam\mapsto\gam^*=(\gam, j(\gam,\tau)^{2k+1})$, where 
\[j(\gam,\tau)=\Big(\frac{c}{d}\Big)\eps^{-1}_d(c\tau+d)^{1/2}\]
for $\gam=\Left a & b \\ c & d\Right\in\Gam_0(4)$. Here $\bigl(\frac{c}{d}\bigl)$ is the Kronecker symbol (see \cite{Mi}) and 
\[\eps_d=\begin{cases}
1 &\tx{if $d\equiv 1\pmod 4$. }\\
\iu &\tx{if $d\equiv 3\pmod 4$. }
\end{cases}\]

Fix a positive integer $N=2^eM$, where $M$ is an odd square-free integer and $e$ equals either $2$ or $3$. 
Let $\chi$ be an even Dirichlet character mod $N$ such that $\chi^2=1$. Put $\chi(\gam)=\chi(d)$ for $\gam=\Left a & b \\ c & d\Right\in\Gam_0(N)$. 

We call a holomorphic function $g$ on $\frkH$ a modular (resp. cusp) form of weight $k+1/2$ with respect to $\Gam_0(N)$ and $\chi$ if $g|\gam^*=\chi(\gam)g$ for every $\gam\in\Gam_0(N)$ and it is holomorphic (resp. vanishes) at all cusps. 

The space of modular (resp. cusp) forms of weight $k+1/2$ with respect to $\Gam_0(N)$ and $\chi$ is denoted by $M_{k+1/2}(N,\chi)$ (resp. $S_{k+1/2}(N,\chi)$). We write $M_{k+1/2}(N)=M_{k+1/2}(N,\chi)$ and $S_{k+1/2}(N)=S_{k+1/2}(N,\chi)$ if $\chi$ is the trivial character. 

Putting 
\[\frkD_k=\{m\in\NN\cup\{0\}\;|\;(-1)^km\equiv0,1\pmod 4\}, \] 
we define the Kohnen plus space $M^+_{k+1/2}(N)$ by
\[M^+_{k+1/2}(N)=\{g\in M_{k+1/2}(N)\;|\;c_g(m)=0\tx{ unless }m\in\frkD_k\}.\] 
Put 
\[S^+_{k+1/2}(N)=S_{k+1/2}(N)\cap M^+_{k+1/2}(N).\] 

We define the $\CC$-linear map $\wp_k$ on formal power series by 
\[\sum_mc(m)q^m|\wp_k=\sum_{m\in\frkD_k}c(m)q^m. \]
It is known that $\wp_k$ induces a $\CC$-linear isomorphism of $S_{k+1/2}(4M)$ onto $S_{k+1/2}^+(8M)$ \cite{UY}. 
We define $\til{\del}_a\in\frkG$ and an operator $U(a)$ on formal power series by 
\begin{gather*}
\til{\del}_a=\bigl(\mtrx{a}{}{}{1}, a^{-k/2-1/4}\bigl), \\
\sum_mc(m)q^m|U(a)=\sum_mc(am)q^m 
\end{gather*}
for a positive integer $a$. Put $U_k(a^2)=U(a^2)\wp_k$. 

We choose an element $\gam_Q\in\SL_2(\ZZ)$ such that
\[\gam_Q\equiv\begin{cases}
\mtrx{0}{-1}{1}{0} &\pmod{Q^2} \\
\ono_2 &\pmod{(Q^{-1}N)^2}  
\end{cases}\]
for each positive divisor $Q$ of $N$ such that $Q$ and $N/Q$ are coprime. 
When $Q$ is odd, we define operators $\til{W}(Q)$, $\til{Y}(Q)$ $\til{W}(2^eQ)$ and $\til{Y}(2^eQ)$ on $M_{k+1/2}(N)$ via 
\begin{align*}
\til{W}(Q)&=\gam_Q^*\til{\del}_Q, \\
\til{Y}(Q)&=Q^{-k/2+3/4}U(Q)\til{W}(Q), \\
\til{W}(2^eQ)&=\til{W}(Q^{-1}M)\tau(N), \\
\til{Y}(2^eQ)&=(2^eQ)^{-k/2+3/4}U(2^eQ)\til{W}(Q^{-1}M)\tau(N),  
\end{align*}
where we put
\[\tau(N)=\Big(\mtrx{0}{-1}{N}{0}, (N^{1/4}(-\iu \tau)^{1/2})^{2k+1}\Big)\in\frkG. \]

Put $\mu=(-1)^{[(k+1)/2]}$. 
\begin{proposition}[Kohnen]\label{prop:21}
We have  
\[M^+_{k+1/2}(4M)=\{h\in M_{k+1/2}(4M)\;|\;(2^{3/2}\mu)^{-1}h|\til{Y}(4)=h\}. \]
The following direct sum decomposition holds.   
\[M_{k+1/2}(4M)=M^+_{k+1/2}(4M)\oplus M^-_{k+1/2}(4M), \]
where
\[M^-_{k+1/2}(4M)=\{h\in M_{k+1/2}(4M)\;|\;(2^{1/2}\mu)^{-1}h|\til{Y}(4)=-h\}. \]
\end{proposition}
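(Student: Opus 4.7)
The plan is to diagonalize the operator $\til{Y}(4)$ on $M_{k+1/2}(4M)$ by computing its action on Fourier expansions, and then identify the two eigenspaces with the plus and minus spaces.

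First, I would unwind the definition
$$\til{Y}(4) = 4^{-k/2+3/4}\,U(4)\,\til{W}(M)\,\tau(4M),$$
noting that $\tau(4M)$ is essentially the Fricke involution at level $4M$, $\til{W}(M)=\gam_M^*\til{\del}_M$ re-adjusts the odd part of the level so that the image again lies in $M_{k+1/2}(4M)$, and $U(4)$ acts as the rescaling $\tau\mapsto 4\tau$ composed with the average over the translates $\tau\mapsto\tau+j/4$ for $j\in\{0,1,2,3\}$.

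Next, for $g(\tau) = \sum_m c(m) q^m$, I would apply the transformation formula for half-integral weight to compute $g(-1/(4M\tau))$, then carry the result through $\til{W}(M)$ and $U(4)$. The averaging built into $U(4)$ produces a quadratic Gauss sum modulo $4$. Tracking carefully the powers of $\iu$ from the automorphy factor $j(\gam,\tau)^{2k+1}$, the Kronecker symbol $\bigl(\frac{c}{d}\bigr)$, and the $\eps_d$-factor, the calculation yields
$$g\,|\,\til{Y}(4)(\tau) = \sum_m c(m)\,\alp_k(m)\,q^m,$$
where the Gauss sum $\alp_k(m)$ depends only on the residue class of $(-1)^km$ modulo $4$ and takes exactly the two values
$$\alp_k(m) = \begin{cases} 2^{3/2}\mu, & (-1)^km \equiv 0,1 \pmod 4,\\ -2^{1/2}\mu, & \text{otherwise,} \end{cases}$$
with $\mu = (-1)^{[(k+1)/2]}$ emerging from $\iu^{-2k-1}$ after the contributions of $\iu$ from the various pieces cancel.

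From this both assertions of the proposition are immediate. The condition $h\,|\,\til{Y}(4)=2^{3/2}\mu\, h$ holds if and only if $c(m)=0$ whenever $(-1)^km\equiv 2,3\pmod 4$, which is the defining condition of $M^+_{k+1/2}(4M)$; similarly the $(-2^{1/2}\mu)$-eigenspace is $M^-_{k+1/2}(4M)$. Since $\alp_k(\cdot)$ takes only these two values, $\til{Y}(4)$ is diagonalizable on $M_{k+1/2}(4M)$ with minimal polynomial $(X-2^{3/2}\mu)(X+2^{1/2}\mu)$, and the claimed direct sum decomposition follows.

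The main obstacle is the sign bookkeeping in the Gauss sum evaluation. The Gauss sum itself is elementary, but propagating the Kronecker symbol, the $\eps_d$-factor, and the choice of square-root branch through the Fricke involution $\tau(4M)$ and the auxiliary rescaling $\til{W}(M)$ is delicate; the explicit form $\mu = (-1)^{[(k+1)/2]}$ only appears after one sorts out all of these branches, and it is this same calculation that distinguishes the two eigenvalues $2^{3/2}\mu$ versus $-2^{1/2}\mu$ (the discrepancy of a factor of $-2$ comes from the contribution of the Gauss-sum term at $j=2$, which is present in one case and annihilates in the other).
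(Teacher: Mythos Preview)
The paper does not actually prove this proposition: it is quoted as a result of Kohnen (hence the attribution in the proposition heading) and is used as background input, with the reference \cite{Ko} supplying the details. So there is no ``paper's own proof'' to compare against.

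That said, your sketch follows exactly the classical line of argument that Kohnen uses. One computes $g\,|\,\til{Y}(4)$ explicitly on Fourier expansions; the Gauss sum modulo $4$ that arises from the $U(4)$-average, combined with the half-integral-weight automorphy factors, produces a multiplier depending only on $(-1)^km\bmod 4$, taking the two values $2^{3/2}\mu$ and $-2^{1/2}\mu$. The eigenspace identification and the direct-sum decomposition then follow immediately. Your outline is correct in structure and identifies the genuine difficulty (the sign bookkeeping through $\tau(4M)$, $\til{W}(M)$, and the $\eps_d$-factors). If you want to make this into a complete proof rather than a sketch, the one place that needs to be written out carefully is the explicit evaluation of that Gauss sum together with the accumulated eighth-root-of-unity factors; everything else is formal.
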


The Petersson inner products on $S_{k+1/2}(N)$ are defined by
\[\La g,h\Ra=[\SL_2(\ZZ):\Gam_0(N)]^{-1}\int_{\Gam_0(N)\bsl\frkH}g(\tau)\oln{h(\tau)}y^{k-3/2}dxdy, \]
where $\tau=x+\iu y$, for $g$, $h\in S_{k+1/2}(N)$. 

The space of newforms $S^{\new,+}_{k+1/2}(N)$ for $S^+_{k+1/2}(N)$ is the orthogonal complement of 
\[\sum_{p|4^{-1}N}\Big(S^+_{k+1/2}(p^{-1}N)+S^+_{k+1/2}(p^{-1}N)|U_k(p^2)\Big) \]
in $S_{k+1/2}^+(N)$ with respect to the Petersson inner product. 

We denote by $\til{T}(p^2)$ (resp. $T(p)$) the usual Hecke operator on the space of modular forms of half-integral (resp. integral) weight. 
The following result was given by Kohnen in \cite{Ko} provided $e=2$. 
The case that $e=3$ was obtained in \cite{UY}. 

\begin{proposition}\label{prop:22} 
\begin{enumerate}
\renewcommand\labelenumi{(\theenumi)}
\item We have
\[S^+_{k+1/2}(N)=\oplus_{a,d\geq 1,\; ad|4^{-1}N}S^{\new,+}_{k+1/2}(4a)|U_k(d^2). \]
\item The operators $\til{T}(p^2)$ and $U_k(q^2)$, where $(p, 4^{-1}N)=1$ and $q|4^{-1}N$, fix $S^{\new,+}_{k+1/2}(N)$. Moreover, $S^{\new,+}_{k+1/2}(N)$ has an orthogonal $\CC$-basis which consists of common Hecke eigenforms of these operators. 
\item There is a bijective correspondence, up to scalar multiple, between Hecke eigenforms in $S_{k+1/2}^{\new,+}(N)$ and those in $S^\new_{2k}(4^{-1}N)$ in the following way. If $g\in S_{k+1/2}^{\new,+}(N)$ is a Hecke eigenform, i.e.,    
\begin{align*}
g|\til{T}(p^2)&=\lam_pg, & g|U_k(q^2)&=\lam_qg
\end{align*} 
for every prime number $p\nmid 4^{-1}N$ and prime divisor $q$ of $4^{-1}N$,  
then there exists a primitive form $f\in S_{2k}(\Gam_0(4^{-1}N))$ 
such that 
\begin{align*}
f|T(p)&=\lam_pf, & f|U(q)&=\lam_qf
\end{align*} 
for every prime number $p\nmid 4^{-1}N$ and prime divisor $q$ of $4^{-1}N$. 
\end{enumerate}
\end{proposition}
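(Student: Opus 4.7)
The plan is to reduce all three assertions to the standard Atkin--Lehner newform theory on $\Gam_0(4^{-1}N)$ via Kohnen's level-preserving isomorphism $S^+_{k+1/2}(N)\simeq S_{2k}(\Gam_0(4^{-1}N))$; this isomorphism is classical by \cite{Ko} when $e=2$, and for $e=3$ it is established in \cite{UY}. Since $4^{-1}N$ equals either $M$ or $2M$, it is squarefree, so Atkin--Lehner theory yields the clean old/new decomposition
\[
S_{2k}(\Gam_0(4^{-1}N))=\bigoplus_{a,d\geq 1,\;ad\mid 4^{-1}N}S^\new_{2k}(a)\,|\,T(d),
\]
together with a simultaneous orthogonal diagonalization by Hecke operators away from the level and by $\pm 1$ Atkin--Lehner involutions at primes dividing the level.

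First I would verify that Kohnen's correspondence intertwines the operator pairs $(\til T(p^2),T(p))$ for $p\nmid 4^{-1}N$ and $(U_k(q^2),U(q))$ for $q\mid 4^{-1}N$, and that it respects the Petersson inner products up to an explicit positive constant; these are direct computations on Fourier expansions (or can be extracted from Shimura's original formulas combined with the plus-space projection $\wp_k$). Granted this intertwining, the orthogonal-complement definition of $S^{\new,+}_{k+1/2}(N)$ transports to the orthogonal complement of oldforms on the integral side, i.e.\ to $S^\new_{2k}(4^{-1}N)$ itself, and the full old/new decomposition on $S_{2k}(\Gam_0(4^{-1}N))$ pulls back to the decomposition claimed in (1). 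The bijection of Hecke eigenforms in (3), together with the stated equality of eigenvalues, is then immediate from the intertwining. Assertion (2) is a formal consequence: the named operators preserve $S^{\new,+}_{k+1/2}(N)$ because the corresponding integral-weight operators preserve $S^\new_{2k}(4^{-1}N)$, and simultaneous orthogonal diagonalizability follows from the squarefree-level integral-weight analogue (which in turn rests on strong multiplicity one).

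The main technical obstacle is the case $e=3$. There the plus space at level $8M$ is cut out by the more subtle operator $\til Y(8)$ of Proposition \ref{prop:21} and the prime $2$ must be handled separately: one needs the Hecke-equivariant isomorphism with $S_{2k}(\Gam_0(2M))$, the correct behavior of the plus space under $U_k(4)$, and the commutation of $\til Y(8)$ with $\til T(p^2)$ for $p\neq 2$. Rather than redo this analysis I would invoke \cite{Ko,UY} for the underlying isomorphism and concentrate the work on the intertwining of Hecke operators and the transfer of the old/new decomposition sketched above.
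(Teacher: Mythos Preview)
The paper does not supply its own proof of this proposition: the sentence immediately preceding it attributes the result to Kohnen \cite{Ko} when $e=2$ and to \cite{UY} when $e=3$, and the proposition is stated without proof. Your proposal correctly identifies these as the relevant sources and gives a reasonable sketch of the strategy underlying them (transport Atkin--Lehner newform theory across a Hecke-equivariant, Petersson-compatible isomorphism $S^+_{k+1/2}(N)\simeq S_{2k}(\Gam_0(4^{-1}N))$, with extra care at the prime $2$ when $e=3$).

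One caution about your sketch: the existence of a single \emph{linear} isomorphism intertwining all the named operators and compatible with the level-raising maps $U_k(d^2)\leftrightarrow$ (integral-weight level-raising) is not quite what \cite{Ko} provides directly. Kohnen's argument proceeds via trace identities and a comparison of eigenvalue systems, establishing first the eigenform bijection of (3) and then deducing (1) and (2); the ``linear isomorphism'' is assembled a posteriori from multiplicity one, rather than being an input. Your outline reverses this order of dependence, which is harmless as a summary but would need care if carried out in detail.
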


For later convenience, we note the following: 

\begin{lemma}\label{lem:21}
Let $f$ be a primitive form in $S^\new_{2k}(4^{-1}N)$ and $g$ a corresponding Hecke eigenform in $S^{\new,+}_{k+1/2}(N)$. 
For each prime divisor $p$ of $4^{-1}N$, let $W(p)$ be the Atkin-Lehner involution on $S^\new_{2k}(4^{-1}N)$ and put 
\[\til{Y}\Pr(p)=\begin{cases}
\eps_p^{2k+1}p^{-1/2}\til{Y}(p) &\tx{if $2\neq p|4^{-1}N$. }\\
4^{-1}\mu \til{Y}(8) &\tx{if $2=p|4^{-1}N$. }
\end{cases}\]
For $\eps\in\{\pm 1\}$ the following conditions are equivalent:
\begin{enumerate}
\renewcommand\labelenumi{(\theenumi)} 
\item[(\roman{one})] $c_f(p)=\eps p^{k-1}$; 
\item[(\roman{two})] $f|W(p)=-\eps f$; 
\item[(\roman{thr})] $g|\til{Y}\Pr(p)=-\eps g$; 
\item[(\roman{fou})] $c_g(p^2m)=\eps p^{k-1}c_g(m)$ for all $m$; 
\item[(\roman{fiv})] $c_g(m)=0$ if $\Psi_p((-1)^km)=\eps$. 
\end{enumerate}
\end{lemma}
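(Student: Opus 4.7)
The plan is to prove the five conditions equivalent by chaining (i) $\Leftrightarrow$ (ii) $\Leftrightarrow$ (iii) and (i) $\Leftrightarrow$ (iv), then (iii) $\Leftrightarrow$ (v). The hypotheses on $N$---namely $N = 2^e M$ with $M$ odd and square-free and $e \in \{2, 3\}$---guarantee that each prime $p \mid 4^{-1}N$ divides the level of $f$ exactly once, so Atkin--Lehner theory applies in its simplest form and each of the five statements concerns purely local information at $p$.

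First, (i) $\Leftrightarrow$ (ii) is standard: for a newform $f \in S^{\new}_{2k}(4^{-1}N)$ at a prime $p$ of multiplicity one in the level, one has $f | W(p) = \pm f$ and the $p$-th Hecke eigenvalue is $c_f(p) = \mp p^{k-1}$, so matching signs yields the equivalence. Next, (i) $\Leftrightarrow$ (iv) follows directly from Proposition \ref{prop:22}(iii): the Shimura--Kohnen correspondence identifies the $U(p)$-eigenvalue of $f$ with the $U_k(p^2)$-eigenvalue of $g$, so $c_f(p) = \eps p^{k-1}$ is equivalent to $g | U_k(p^2) = \eps p^{k-1} g$. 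Unpacking $U_k(p^2) = U(p^2) \wp_k$ and using that $g$ lies in the plus space, the latter is equivalent to $c_g(p^2 m) = \eps p^{k-1} c_g(m)$ for every $m$, which is (iv); the small additional verification required at $p = 2$ is handled in \cite{UY}.

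The equivalence (ii) $\Leftrightarrow$ (iii) is the explicit intertwining of $W(p)$ on integral-weight newforms with $\til{Y}\Pr(p)$ on half-integral-weight newforms in the plus space, under the Shimura--Kohnen correspondence. The normalizations $\eps_p^{2k+1} p^{-1/2}$ (for odd $p$) and $4^{-1}\mu$ (for $p = 2$) are chosen precisely so that $\til{Y}\Pr(p)$ is an involution whose eigenvalue on $g$ is the opposite of the $W(p)$-eigenvalue on $f$; the odd-$p$ case is Theorem 2 of \cite{Ko}, and the $p = 2$ case is carried out in \cite{UY}.

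Finally, (iii) $\Leftrightarrow$ (v) I would prove by a direct computation on Fourier expansions. Writing $\til{Y}(p) = p^{-k/2+3/4} U(p) \til{W}(p)$ with $\til{W}(p) = \gam_p^* \til{\del}_p$, and applying the modular transformation law to the Fourier expansion of $g$, the $m$-th Fourier coefficient of $g | \til{Y}\Pr(p)$ becomes a linear combination of $c_g(m)$ and $c_g(p^2 m)$ whose coefficients are Gauss sums depending on the quadratic-residue behaviour of $(-1)^k m$ modulo $p$. The eigenvalue equation $g | \til{Y}\Pr(p) = -\eps g$ then separates into three cases according to whether $\Psi_p((-1)^k m)$ equals $0$, $-\eps$, or $\eps$; in the last case it forces $c_g(m) = 0$, yielding (v). The main technical obstacle will be the explicit Gauss sum computation for odd $p$, together with the separate and more delicate analysis at $p = 2$, where $\til{Y}\Pr(2)$ involves $\tau(N)$ and interacts nontrivially with the plus-space condition.
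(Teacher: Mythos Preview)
Your outline is correct, and in fact more detailed than what the paper offers: the paper's proof reads in its entirety ``The proof is an easy paraphrase of \cite{UY}, and is omitted.'' You have essentially reconstructed that paraphrase, drawing on the same sources (\cite{Ko} for odd $p$, \cite{UY} for $p=2$, and Proposition~\ref{prop:22} for the Shimura--Kohnen link), so there is no genuine difference in approach to compare.

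One small bookkeeping remark on your (iii)~$\Leftrightarrow$~(v) step: the Fourier computation you describe (which is exactly \cite[Proposition~1.29]{U} for odd $p$, reappearing as Lemma~\ref{lem:41}(2) later in the paper) directly controls only those $m$ with $\ord_p m\le 1$, since the Kronecker symbol $\bigl(\tfrac{(-1)^km}{p}\bigr)$ vanishes once $p\mid m$, whereas $\Psi_p((-1)^km)$ can equal $\eps$ even when $p^2\mid m$. To extend the vanishing in (v) to all such $m$ you need to invoke (iv), which you have already shown equivalent to (iii); so the chain closes, but it is worth making that dependence explicit rather than presenting (iii)~$\Leftrightarrow$~(v) as a standalone computation.
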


\begin{proof}
The proof is an easy paraphrase of \cite{UY}, and is omitted. 
\end{proof}


\section{\bf Statement of the main theorems}\label{sec:3}

We first define a bit more notation.  
The letter $k$ hereafter stands for a positive integer such that 
\[\kap=k+\bigl[\tfrac{n+1}{2}\bigl]\]
is an even integer. 
For $\eps\in\{\pm1\}$ and $e\in\ZZ$, we define a Laurent polynomials $l_{e, \eps}$ and $h_{e, \eps}$ by 
\begin{align*}
l_{e, \eps}(X)&=\begin{cases}
\dfrac{(\eps X)^{e+1}-X^{-e-1}}{\eps X-X^{-1}} &\tx{if $e\geq 0$, }\\
0 &\tx{if $e<0$, }
\end{cases}\\
h_{e, \eps}(X)&=\begin{cases}\eps X^e+X^{-e} &\tx{if $e> 0$. }\\
(1+\eps)/2 &\tx{if $e=0$. }\\
0 &\tx{if $e<0$. }
\end{cases}
\end{align*}
Let $a$ be a nonzero element of $\QQ_p$. Put $\Psi_p(a)=1$, $-1$, $0$ according as $\QQ_p(\sqrt{a})$ is $\QQ_p$, an unramified quadratic extension of $\QQ_p$ or a ramified quadratic extension of $\QQ_p$. 
If $p$ is an odd rational prime, then $\bigl(\frac{*}{p}\bigl)$ is the quadratic residue symbol modulo $p$. 
We put 
\[\biggl(\frac{a}{2}\biggl)=\begin{cases}
1 &\tx{if $a\equiv 1\pmod 8$. }\\
-1 &\tx{if $a\equiv 5\pmod 8$. }\\
0 &\tx{if $a\equiv 0,2,3,4,6,7\pmod 8$. }
\end{cases}\]
Let $\bigl(\frac{a}{p}\bigl)=0$ if $a\notin \ZZ_p$. 
Put
\[\frkf_p(a)=\biggl[\frac{\ord_pa+1-\del(p=2)}{2}\biggl]-1+\Psi_p((-1)^ka)^2. \]
We put $l_e=l_{e,1}$ and 
\begin{gather*}
\lam_{p,a}=l_{\frkf_p(a)}-\Psi_p((-1)^ka)p^{-1/2}l_{\frkf_p(a)-1}, \\
l_{p,S,a}=\begin{cases}
\lam_{p,a}
&\tx{if $p\in\frkS_0$, $2\nmid n$. }\\
\lam_{p,a}+\eta_p(S)p^{1/2}\lam_{p,p^{-2}a}
&\tx{if $p\in\frkS_1$, $2\nmid n$. }\\
\lam_{p,a}-\biggl(\dfrac{(-1)^kp^{-2}a}{p}\biggl)p^{1/2}\lam_{p,p^{-2}a}-p\lam_{p,p^{-4}a}
&\tx{if $p\in\frkS_2$, $2\nmid n$. }\\
l_{\ord_pa, \xi_p(S)}&\tx{if $p\in\frkS_0$, $2|n$. }\\
h_{\ord_pa, \eta_p(S/2)\Chi_{S,p}((-1)^kd_Sa)}&\tx{if $p\in\frkS_1$, $2|n$. }\\
l_{\ord_pa, \xi_p(S)}-\xi_p(S)pl_{\ord_pa-2, \xi_p(S)}&\tx{if $p\in\frkS_2$, $2|n$. }
\end{cases}
\end{gather*}
Set $\frkf_N=\prod_pp^{\frkf_p(N)}$ and $\frkd_N=N\frkf_N^{-2}$ for each positive integer $N$. 
\begin{theorem}\label{thm:31}
Suppose that $n$ is odd and $\kap=k+\frac{n+1}{2}$ is even. 
Let $b$ (resp. $d$) be a positive divisor of $b_S$ (resp. $d_S$). 
Let $f\in S_{2k}(\Gam_0(bd))$ be a primitive form, the $L$-function of which is given by 
\begin{multline*}
L(f,s)=\prod_{p|bd}(1-\alp_pp^{k-1/2-s})^{-1}\\
\times\prod_{p\nmid bd}(1-\alp_pp^{k-1/2-s})^{-1}(1-\alp_p^{-1}p^{k-1/2-s})^{-1}. 
\end{multline*}
Suppose that $f|W(p)=\eta_p(S)f$ for each prime factor $p$ of $b$. 
Let $g\in S^+_{k+1/2}(4bd)$ be a Hecke eigenform which corresponds to $f$ via the Shimura correspondence. 
For each positive integer $N$, we put 
\[c_\vPh(N)=2^{-\bfb_{bd}(N)}c_g(\frkd_{N})\frkf_{N}^{k-1/2}\prod_pl_{p,S,N}(\alp_p) \]
where $\bfb_{bd}(N)$ is the number of distinct prime divisors of $bd$ such that $\Psi_p((-1)^kN)\neq 0$.   
Then 
\[\vPh(\tau,w)=\sum_{(a,\alp)\in\calt^+}c_\vPh(\Del_{a,\alp})q^a\bfe(S(\alp,w))\]
is a Hecke eigenform in $J^{\cusp,M}_{\kap,S}$ whose $L$-function is given by 
\[L(\vPh,s)=\prod_p(1-\alp_pp^{-s})^{-1}(1-\alp_p^{-1}p^{-s})^{-1}. \] 
Moreover, the lifting $f\mapsto\vPh$ gives a bijective correspondence, up to scalar multiple, between Hecke eigenforms in 
\[\oplus_{b,d\geq 1,\;b|b_S,\; d|d_S}\{f\in S^\new_{2k}(bd)\;|\;f|W(p)=\eta_p(S)f\tx{ for each }p|b\}, \]
and those in $J^{\cusp,M}_{\kap,S}$. 
\end{theorem}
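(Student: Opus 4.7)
The plan is to realize the lifting $f\mapsto \vPh$ as the composition of three Hecke-equivariant isomorphisms
\[ J^{\cusp,M}_{\kap,S} \xrightarrow{\,i_1\,} S^S_{k+1/2}(\vDe_S) \xrightarrow{\,i_2\,} \bigoplus_{b|b_S,\;d|d_S}\{g\in S^{\new,+}_{k+1/2}(4bd)\mid g\text{ satisfies (\roman{one})}\} \xrightarrow{\,i_3\,} \bigoplus_{b,d}\{f\} \]
sketched in the introduction, and then to read off the explicit Fourier coefficients and the $L$-function via an unramified-by-prime trace through the composition. Since each of $i_1,i_2,i_3$ intertwines the Hecke action, the image of $f$ is automatically a Hecke eigenform, so the substance of the argument lies in constructing the maps and in a combinatorial identification of coefficients.

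I would define $i_1(\phi)$ to be the function $h$ of formula (\ref{tag:14}) in Lemma \ref{lem:14}. Two facts combine to show $h \in S^+_{k+1/2}(\vDe_S)$: Proposition \ref{prop:11}(1) gives the modular transformation of $\vth^S_0$ on $\Gam_0(\vDe_S)$, while Lemma \ref{lem:12} gives the compensating $J$-functional equation of $\phi_0$. The condition $\phi\in J^M_{\kap,S}$, rewritten via Lemma \ref{lem:13}(\roman{two}), is what turns the local behavior of the $\phi_\mu$ at $p\in\frkS_1\cup\frkS_2$ into the plus-space conditions defining $S^S_{k+1/2}(\vDe_S)$: for $p\in\frkS_1$ the one-dimensional radical of $(V_p,q_p)$ produces the Atkin--Lehner eigen-identity of Lemma \ref{lem:21}(\roman{fiv}), which is condition (\roman{one}); for $p\in\frkS_2$ the two-dimensional radical produces the trace condition (\roman{two}). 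Injectivity of $i_1$ is then immediate from the reconstruction formula (\ref{tag:15}).

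The main obstacle is the surjectivity of $i_1$. Given $h\in S^S_{k+1/2}(\vDe_S)$, one defines candidate components $\phi_\mu$ by (\ref{tag:15}) and sets $\phi=\sum_\mu \phi_\mu\vth^S_\mu$; the problem is to verify the Jacobi transformation law. By Lemma \ref{lem:12} this reduces to a vector-valued $J$-functional equation for the $\phi_\mu$, and Proposition \ref{prop:11} handles the companion theta transformation. The local conditions (\roman{one}) and (\roman{two}) are precisely the constraints needed to make the reconstructed vector $\{\phi_\mu\}$ transform in the required way, because the factors $l_{p,S,N}$ are built from the same local data as the Siegel series that occur in the Fourier expansion of the Jacobi Eisenstein series $E_{\kap,S}$ recalled in the introduction. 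Comparison against $E_{\kap,S}$ provides a concrete model in which every identity can be checked prime by prime. The isomorphism $i_2$ is supplied by Section \ref{sec:4}, and $i_3$ is the Kohnen-refined Shimura correspondence of Proposition \ref{prop:22}(3), combined with the Atkin--Lehner dictionary of Lemma \ref{lem:21} which translates the sign $\eta_p(S)$ for $p\mid b$ into condition (\roman{one}).

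Finally, to identify $c_\vPh(\Del_{a,\alp})$, I would trace $f\leadsto g\leadsto h\leadsto\phi$ prime by prime. Lemma \ref{lem:21}(\roman{fou}) computes $c_g(p^{2e}m)$ in terms of $c_g(m)$ for $p\mid 4bd$, while the classical Hecke recursion $c_g(p^{2f+2}m)=c_f(p)c_g(p^{2f}m)-p^{2k-1}c_g(p^{2f-2}m)$ controls the unramified primes. Writing $N=\frkd_N\frkf_N^2$ and repackaging these recursions yields the factor $\frkf_N^{k-1/2}\prod_p l_{p,S,N}(\alp_p)$, and the factor $2^{-\bfb_{bd}(N)}$ accounts for doubled contributions at ramified primes where $\Psi_p((-1)^kN)\neq 0$. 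The $L$-function identity then drops out of Proposition \ref{prop:12} applied to $\vPh$ with $S^\sim=S_{a,\alp}$ maximal: the ratio $B_{S^\sim,p}(p^{-s-1/2})/B_{S,p}(p^{-s})$ cancels against the corresponding local factor in $\prod_p l_{p,S,\Del_{a,\alp}}(\alp_p)$, and the remaining Dirichlet $L$-factor recombines with the Shimura relation between $c_g$ and $c_f$ to give the clean Euler product $L(\vPh,s)=\prod_p(1-\alp_pp^{-s})^{-1}(1-\alp_p^{-1}p^{-s})^{-1}$.
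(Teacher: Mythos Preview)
Your overall architecture—realizing the lift as the composite $i_3^{-1}\circ i_2^{-1}\circ i_1$ and isolating the surjectivity of $i_1$ as the crux, to be resolved by comparison with the Jacobi Eisenstein series $E_{\kap,S}$—matches the paper exactly (Proposition~\ref{prop:51} together with Corollary~\ref{cor:81}). The bijectivity and the $L$-function identification via Proposition~\ref{prop:12} are also as in the paper.

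Two points, however, diverge from the paper and the second is a genuine gap.

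\emph{Definition of $i_1$.} You take $i_1(\phi)$ to be the function $h$ of (\ref{tag:14}), but $h$ lies in $M_{k+1/2}(D_S,\psi_{4\Lam_S})$ with a nontrivial nebentypus (see the proof of Proposition~\ref{prop:51}), not in the plus space $M^+_{k+1/2}(\vDe_S)$. The paper's map (Lemma~\ref{lem:53}) is $i_1(\phi)=\phi_0\,|\,\til{W}(4b_S)\wp_k$: the twist $\til{W}(4b_S)$ removes the character $\psi_{4b_S}$ carried by $\phi_0$ via Proposition~\ref{prop:11}(1), and $\wp_k$ projects to the plus space. Verifying that the image satisfies conditions (A) and (B) of Definition~\ref{def:41} is then an explicit Weil-representation computation on $\cals(X_\bff)$, not a direct consequence of the radical dimensions alone.

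\emph{Hecke eigenform property.} You assert that ``each of $i_1,i_2,i_3$ intertwines the Hecke action'' and conclude that $\vPh$ is a Hecke eigenform. The paper never establishes Hecke equivariance of $i_1$; the Hecke algebras on $J^\cusp_{\kap,S}$ (Sugano's, Definition~\ref{def:13}) and on $S^+_{k+1/2}(\vDe_S)$ are different objects, and a direct intertwining would itself require proof. The paper instead uses Ikeda's method \cite[Theorem~3.3]{Ik2}: Corollary~\ref{cor:81} shows that the Fourier coefficients of $E_{\kap,S}$ have precisely the shape $c_\vPh(N)$ with $\alp_p$ specialized to $p^{k-1/2}$. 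The Hecke relations satisfied by the Eisenstein series are therefore polynomial identities in the Laurent polynomials $l_{p,S,N}$ that persist when one substitutes arbitrary Satake parameters $\alp_p$, whence $\vPh$ is an eigenform. This is the missing step in your outline; your use of $E_{\kap,S}$ only for surjectivity does not cover it.
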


\begin{remark}\label{rem:31}
Theorem \ref{thm:31} is compatible with the result of \cite{SZ} when $n=1$, as one can see by an easy computation.  
\end{remark}

From now on we consider the case when $n$ is even. 
Let $d$ be a square-free integer such that $\frkd_S$ and $d$ are coprime. 
We write $\Prm_k(\frkd_Sd, \chi_S)$ for the set of all primitive forms in $S_k(\Gam_0(\frkd_Sd),\chi_S)$. 
To state our result, we need the subset $V_k(\frkd_Sd,\chi_S)(\subset\Prm_k(\frkd_Sd,\chi_S))$ of Definition \ref{def:62}. 
Let $\Prm^*_k(\frkd_Sd,\chi_S)$ be the complement of $V_k(\frkd_Sd,\chi_S)$ in $\Prm_k(\frkd_Sd,\chi_S)$. 
Note that Atkin-Lehner involutions on $S_k(\Gam_0(\frkd_Sd),\chi_S)$ acts on the set $\Prm^*_k(\frkd_Sd,\chi_S)$ (see \S \ref{sec:6}). 

\begin{theorem}\label{thm:32}
Suppose that $n$ and $\kap=k+\frac{n}{2}$ are even.  
Let $d$ be a positive divisor of $d_S$ and $f$ an element of $\Prm^*_k(\frkd_Sd,\chi_S)$, the $L$-function of which is given by
\begin{multline*}
L(f,s)=\prod_{p|d}(1-\xi_p(S)\alp_pp^{(k-1)/2-s})^{-1}\prod_{p|\frkd_S}(1-\alp_pp^{(k-1)/2-s})^{-1}\\
\times\prod_{p\nmid \frkd_Sd}(1-\xi_p(S)\alp_pp^{(k-1)/2-s})^{-1}(1-\alp_p^{-1}p^{(k-1)/2-s})^{-1}. 
\end{multline*}
For each positive integer $N$, we put 
\[c_\vPh(N)=N^{(k-1)/2}\prod_pl_{p,S,N}(\alp_p)\]
We define the function $\vPh$ on $\cald$ by
\[\vPh(\tau,w)=\sum_{(a,\alp)\in\calt^+}c_\vPh(D_{a,\alp})q^a\bfe(S(\alp,w)). \]
Then $\vPh$ is a Hecke eigenform in $J^{\cusp,M}_{\kap,S}$ whose $L$-function is given by
\[L(\vPh,s)=\prod_p(1-\alp_p^2p^{-s})^{-1}(1-\xi_p(S)p^{-s})^{-1}(1-\alp_p^{-2}p^{-s})^{-1}. \]
Moreover, the lifting $f\mapsto\vPh$ defines a bijection from the orbits of Atkin-Lehner involutions in 
\[\bigcup_{d\geq 1, \; d|d_S}\Prm^*_k(\frkd_Sd,\chi_S) \]
onto Hecke eigenforms in $J^{\cusp,M}_{\kap,S}$. 
\end{theorem}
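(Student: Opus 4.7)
The strategy is to parallel the proof of Theorem \ref{thm:31}, executing the composition of canonical isomorphisms $i_1, i_2$ for even $n$ (the Shimura correspondence $i_3$ is unnecessary, since the components $\phi_\mu$ of $\phi \in J^M_{\kap,S}$ now have integer weight $k = \kap - n/2$). The explicit shape of $c_\vPh$ is then pinned down by comparison with the Jacobi Eisenstein series.

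First, I would construct a canonical injection
\[
i_1 : J^{\cusp,M}_{\kap,S} \hookrightarrow S_k(\Gam_0(D_S), \chi_S),
\]
with $D_S = \frkd_S d_S$, by sending $\phi$ to the integer-weight analog of the function $h$ in (\ref{tag:14}); the requisite automorphy is delivered by Proposition \ref{prop:11}(2), and injectivity by Lemma \ref{lem:13} together with the reconstruction formula (\ref{tag:15}). One then characterizes the image as a subspace $S^S_k(D_S, \chi_S)$ cut out by a local trace/vanishing condition at each $p \in \frkS_1 \cup \frkS_2$, in direct analogy with conditions (i) and (ii) of the Introduction.

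Second, using newform theory for $S_k(\Gam_0(\frkd_S d_S), \chi_S)$, I would identify $S^S_k(D_S, \chi_S)$ with a direct sum, over divisors $d \mid d_S$, of subspaces built from newforms in $\Prm^*_k(\frkd_S d, \chi_S)$; the role of $\Prm^*_k$ (the complement of $V_k$ of Definition 6.2) is precisely to single out those primitive forms whose local components at $p \in \frkS_1$ are compatible with the local restriction of $\chi_S$. At each prime $p \mid d_S/d$, the $U(p)$-oldform translates of a newform span a space on which the Atkin--Lehner involution $W(p)$ acts; this action leaves the eventual Jacobi lift $\vPh$ invariant, so Atkin--Lehner \emph{orbits}, rather than individual primitive forms, parametrize the eigenforms in $J^{\cusp,M}_{\kap,S}$.

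Third, given $f \in \Prm^*_k(\frkd_S d, \chi_S)$, setting $\vPh = i_1^{-1} \circ i_2^{-1}(f)$ produces a Hecke eigenform in $J^{\cusp,M}_{\kap,S}$, and the explicit formula $c_\vPh(N) = N^{(k-1)/2} \prod_p l_{p,S,N}(\alp_p)$ is extracted by reading off the Fourier expansion of $(i_1)^{-1}$ applied to the canonical representative of $f$, matched against Corollary \ref{cor:81}: the Fourier coefficients of $E_{\kap,S}$ already have the shape $\frkf_N^{k-1/2} \prod_p l_{p,S,N}(p^{k-1/2})$, and substituting the Satake parameter $\alp_p$ of $f$ for the Eisenstein value $p^{k-1/2}$ at each unramified $p$ gives the cuspidal formula. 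Proposition \ref{prop:12} then yields the stated Euler product for $L(\vPh, s)$.

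The principal obstacle is the careful bookkeeping at primes $p \in \frkS_1$, where $\chi_S$ ramifies: the Euler factor $h_{\ord_p N, \eta_p(S/2)\Chi_{S,p}((-1)^k d_S N)}$ involves a sign depending intricately on local invariants of both $S$ and $N$, and must be reconciled with the local behavior of newforms in $\Prm^*_k$. Pinning down the correct definition of $V_k(\frkd_S d, \chi_S)$ — excluded so that only genuine lifts are produced — and verifying that the Atkin--Lehner orbits at primes in $\frkS_2 \cap \{p \mid d\}$ give precisely the right target of the bijection, is the technically delicate step.
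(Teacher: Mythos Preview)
Your overall architecture is correct and matches the paper: construct $i_1$ via $\phi\mapsto\phi_0|W(\frkd_S)$ (Lemma~\ref{lem:72}), build the inverse $j_1$ (Proposition~\ref{prop:71}), decompose the image $S^S_k(D_S,\chi_S)$ via newform theory (Proposition~\ref{prop:61}), and match Fourier coefficients via Corollary~\ref{cor:81} and Lemma~\ref{lem:73}. However, you have two substantive misidentifications that would derail the argument.

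First, you locate the Atkin--Lehner orbits at primes $p\mid d_S/d$ (in $\frkS_2$) and link them to $U(p)$-oldforms. This is wrong. The orbits arise at primes $p\in\frkS_1$, where $\chi_S$ ramifies: for each subset $P\subset\frkS_1$ there is a twisted primitive form $f_P\in\Prm_k(\frkd_Sd,\chi_S)$ with $f_P\approx f|W(D_{S,P})$, and the paper's key construction is the symmetrization
\[
f^\sim=\sum_{P\subset\frkS_1}\eta_P(S/2)\,\chi_{S,P}((-1)^kd^{-1}d_S)\,f_P,
\]
which is (up to sign) invariant under $f\mapsto f_Q$. Hence distinct primitive forms in the same $\frkS_1$-orbit yield the same $f^\sim$, hence the same $\vPh$. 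The oldform structure at $p\mid d^{-1}d_S$ is handled separately by the operator $Q(p)$ and contributes no ambiguity. At primes $p\mid d$ the form is already new and $W(p)$ acts by a scalar, so those orbits are trivial.

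Second, your description of $V_k(\frkd_Sd,\chi_S)$ as forms with ``incompatible local components at $p\in\frkS_1$'' is not what the paper does. By Lemma~\ref{lem:63}, $V_k$ consists exactly of those $f$ for which $f^\sim=0$, equivalently $f=f_P$ for some nonempty $P\subset\frkS_1$ with a prescribed sign; these are the forms arising from Hecke characters of the quadratic fields $K_P$. The set $\Prm^*_k$ is defined precisely to exclude the vanishing locus of the lift, not a local compatibility failure. (Incidentally, for even $n$ the Eisenstein coefficient is $N^{(k-1)/2}\prod_p l_{p,S,N}(p^{(k-1)/2})$, not $\frkf_N^{k-1/2}\prod_p\cdots$ as you wrote.)
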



\section{\bf The space $M^S_{k+1/2}(\vDe_S)$}\label{sec:4} 

Suppose that $n$ is odd and continue with this assumption until \S \ref{sec:5}.  
We first put $\vDe_S=4b_Sd_S$. 
For each prime factor $q$ of $4^{-1}\vDe_S$, we define the trace operator $\Tr^S_q: M^+_{k+1/2}(\vDe_S)\to M^+_{k+1/2}(q^{-1}\vDe_S)$ by putting
\[\Tr^S_q(g)=(q+1)^{-1}\sum_{\gam\in\Gam_0(\vDe_S)\bsl\Gam_0(q^{-1}\vDe_S)}g|\gam^* \]
for $g\in M^+_{k+1/2}(\vDe_S)$ if $q$ is odd, and by putting
\[\Tr^S_2(g)=\frac{1}{2}\biggl(\sum_{\gam\in\Gam_0(\vDe_S)\bsl\Gam_0(2^{-1}\vDe_S)}g|\gam^*\biggl)|\mathrm{pr}\]if $q=2$. 
Here, let
\[\mathrm{pr}=(2^{-1/2}\mu \til{Y}(4)+1)/3. \]

\begin{remark}\label{rem:41}
\begin{enumerate}
\renewcommand\labelenumi{(\theenumi)} 
\item Proposition \ref{prop:21} states that the operator $\mathrm{pr}$ is the orthogonal projection of $M_{k+1/2}(2^{-1}\vDe_S)$ onto $M^+_{k+1/2}(2^{-1}\vDe_S)$. 
\item If $q$ is odd, then $\Tr^S_q=(q+1)^{-1}(1+q^{-k/2+3/4}\til{W}(q)U(q))$. Indeed, 
\[(q+1)\Tr^S_q(g)=g+\sum_{i=0}^{q-1}g|\gam_q^*\mtrx{1}{i}{0}{1}^*=g+q^{-k/2+3/4}g|\til{W}(q)U(q), \]
where we use the fact hat $\{\ono_2\}\cup\{\gam_q\Left 1 & i \\ 0 & 1\Right\}_{i=0}^{q-1}$ is a complete representative for $\Gam_0(\vDe_S)\bsl\Gam_0(q^{-1}\vDe_S)$.  
\end{enumerate}
\end{remark}

We consider the following conditions: 
\begin{enumerate}
\renewcommand\labelenumi{(\theenumi)}
\item[(\roman{one})] $c_g(m)=0$ if $\biggl(\dfrac{(-1)^km}{q}\biggl)=-\eta_q(S)$;   
\item[(\roman{two})] $\Tr^S_q(g)=0$ 
\end{enumerate} 
for each prime factor $q$ of $4^{-1}\vDe_S$. 

\begin{lemma}\label{lem:41}
If $q$ is an odd prime divisor of $4^{-1}\vDe_S$, then the following assertions hold. 
\begin{enumerate}
\renewcommand\labelenumi{(\theenumi)}
\item $\eps_q^{2k+1}q^{-1/2}\til{Y}(q)$ is an involution on $M^+_{k+1/2}(\vDe_S)$. 
\item The conditions (\roman{one}) holds if and only if $\eps_q^{2k+1}q^{-1/2}g|\til{Y}(q)=\eta_q(S)g$. 
\item The conditions (\roman{two}) holds if and only if $g|\til{W}(q)\til{Y}(q)=-g|\til{W}(q)$.  
\end{enumerate}
\end{lemma}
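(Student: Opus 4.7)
The plan is to treat the three parts in the order (iii), (i), (ii), corresponding to increasing computational complexity.

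For (iii), Remark 4.1 (ii) gives $\Tr^S_q = (q+1)^{-1}(1 + q^{-k/2+3/4}\til{W}(q)U(q))$, so $\Tr^S_q(g)=0$ is equivalent to $g = -q^{-k/2+3/4}g|\til{W}(q)U(q)$. Right-multiplying by $\til{W}(q)$, which is invertible in $\frkG$, and using the definition $\til{Y}(q)=q^{-k/2+3/4}U(q)\til{W}(q)$, this rewrites as $g|\til{W}(q) = -g|\til{W}(q)\til{Y}(q)$, which is condition (iii); the converse runs identically backwards.

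For (i), I will compute $\til{Y}(q)^2 = q^{-k+3/2}(U(q)\til{W}(q))^2$ directly. Writing $\til{W}(q) = \gam_q^*\til{\del}_q$ and expanding $U(q)\til{W}(q)U(q)\til{W}(q)$ via a coset decomposition of $\Gam_0(\vDe_S)\bsl\Gam_0(q^{-1}\vDe_S)$, the congruence $\gam_q \equiv \onon \pmod{q^2}$ makes the computation essentially local at $q$. The factor $\eps_q^{2k+1}$ enters as the quadratic Gauss sum from two applications of $j(\gam_q,\tau)^{2k+1}$, and the plus-space projection $\mathrm{pr}$ from Remark 4.1 collapses cross-terms, so that $(\eps_q^{2k+1}q^{-1/2}\til{Y}(q))^2 = 1$ on $M^+_{k+1/2}(\vDe_S)$.

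For (ii), I will derive an explicit Fourier-coefficient formula for $g|\til{Y}\Pr(q)$, where $\til{Y}\Pr(q)=\eps_q^{2k+1}q^{-1/2}\til{Y}(q)$. The action of $\gam_q^*\til{\del}_q$ on $g(\tau) = \sum c_g(m)q^m$ at the cusp $1/q$ produces a sum weighted by quadratic Gauss sums of shape $\sum_{x\bmod q}\bigl(\frac{x}{q}\bigr)\bfe(xm/q)$, which evaluate to $\eps_q q^{1/2}\bigl(\frac{m}{q}\bigr)$ up to an explicit phase. Composing with $U(q)$ and the normalization $\eps_q^{2k+1}q^{-1/2}$ yields, for $m$ in the plus-space support,
\[c_{g|\til{Y}\Pr(q)}(m) = \biggl(\frac{(-1)^km}{q}\biggr)c_g(m) + (\text{lower-order correction from } c_g(q^{-2}m)),\]
the correction being absent when $q^2\nmid m$. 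Imposing $g|\til{Y}\Pr(q) = \eta_q(S)g$ and inducting on $\ord_q m$ forces $c_g(m) = 0$ whenever $\bigl(\frac{(-1)^km}{q}\bigr) = -\eta_q(S)$; conversely, the vanishing in (one) collapses the recursion to a tautology, giving the stated equivalence.

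The main obstacle is the Fourier-coefficient computation in (ii). One must carefully evaluate the Gauss sums arising from the $\gam_q$ transformation, separately analyze the cases $q \nmid m$, $q\| m$, and $q^2\mid m$, and verify that the plus-space condition keeps the induced recursion closed among the surviving coefficients. The calculation parallels Kohnen's original treatment \cite{Ko}, but here the eigenvalue $\eta_q(S)$---which depends on the local Witt invariant of $S$ at $q$---must be threaded through the identification of the $\pm1$-eigenspaces of the involution.
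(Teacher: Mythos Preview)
Your argument for (3) is exactly the paper's: it follows in one line from Remark~4.1(2). For (1) and (2) the paper does not compute at all---it simply cites \cite[Proposition~1.29]{U}, where the involution property and the Fourier-coefficient characterization of the eigenspaces of $\til{Y}\Pr(q)$ are worked out. Your plan to re-derive these directly is legitimate and is essentially what Ueda does, but two points in your sketch need tightening.

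In (1), the projector $\mathrm{pr}=\frac{1}{3}(2^{-1/2}\mu\til{Y}(4)+1)$ from Remark~4.1 is a purely $2$-adic object; it has no role in showing $(\eps_q^{2k+1}q^{-1/2}\til{Y}(q))^2=1$ for odd $q$. The involution property comes from $\til{W}(q)^2=\eps_q^{-(2k+1)}$ together with the explicit commutation between $U(q)$ and $\til{W}(q)$ (equivalently, the Gauss-sum computation you outline), with no projection needed.

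In (2), the converse is the delicate step. Condition (i) only constrains $c_g(m)$ for $q\nmid m$, whereas the eigenvalue equation $g|\til{Y}\Pr(q)=\eta_q(S)g$ imposes relations for all $m$, including those with $q\mid m$. Your claim that the vanishing in (i) ``collapses the recursion to a tautology'' presupposes that you know the precise shape of $c_{g|\til{Y}\Pr(q)}(m)$ when $q\mid m$---in particular, how these coefficients feed back into those with $q\nmid m$. The leading term you wrote down is not enough: you need the full formula (which in Ueda's treatment involves both $c_g(q^{-2}m)$- and $c_g(q^2m)$-type contributions) to close the induction on $\ord_q m$. The skeleton is right, but the step requires the exact expression, not an unspecified ``lower-order correction.''
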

\begin{proof}
See \cite[Proposition 1.29]{U} for our assertions (1), (2). 
Our assertion (3) follows from Remark \ref{rem:41} (2). 
\end{proof}

\begin{lemma}\label{lem:42}
If $\vDe_S$ is divisible by $8$, then the following assertions hold. 
\begin{enumerate}
\renewcommand\labelenumi{(\theenumi)}
\item The operator $(4\mu)^{-1}\til{Y}(8)$ is an involution on $M_{k+1/2}^+(\vDe_S)$. 
\item The conditions (\roman{one}) holds if and only if $(4\mu)^{-1}g|\til{Y}(8)=\eta_2(S)g$.  
\item The conditions (\roman{two}) holds if and only if $(2^{1/2}\mu)^{-1}g|\wp_k^{-1}\til{Y}(4)=-g$. 
\end{enumerate}
\end{lemma}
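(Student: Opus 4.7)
The plan is to imitate the odd-prime argument of Lemma \ref{lem:41}, replacing $\til{W}(q)$, $\til{Y}(q)$ and the Kronecker symbol $\bigl(\frac{*}{q}\bigr)$ by their dyadic analogues $\til{W}(8)$, $\til{Y}(8)$ and the $\{0,\pm1\}$-valued symbol $\bigl(\frac{*}{2}\bigr)$ fixed in \S \ref{sec:3}. The key technical inputs will be the dyadic versions of \cite[Proposition 1.29]{U}, of Proposition \ref{prop:21}, and of the analysis of the Kohnen projector at the prime $2$ carried out in \cite{UY}.

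For (1) I would begin from the factorisation $\til{Y}(8)=8^{-k/2+3/4}U(8)\til{W}(8)$ and compute $\til{Y}(8)^2$ on $M^+_{k+1/2}(\vDe_S)$ using the commutation relations between $\til{W}(8)$, $U(8)$ and $\tau(\vDe_S)$. A direct evaluation of the resulting scalar shows that $(4\mu)^{-1}$ is precisely the normalisation making the operator square to the identity on the plus space, in parallel with the odd case.

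For (2) the heart of the matter is an explicit formula for the action of $(4\mu)^{-1}\til{Y}(8)$ on Fourier coefficients. Adapting \cite[Proposition 1.29]{U} to the dyadic situation, I expect to obtain
\[(4\mu)^{-1}g|\til{Y}(8)=\sum_m\biggl(\frac{(-1)^km}{2}\biggr)c_g(m)q^m\]
for $g=\sum_mc_g(m)q^m\in M^+_{k+1/2}(\vDe_S)$. Since the $\{0,\pm 1\}$-valued symbol at $2$ coincides with $\Psi_2((-1)^km)$ whenever it is nonzero, condition (\roman{one}) at $q=2$ then becomes the eigenvalue equation with eigenvalue $\eta_2(S)$.

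For (3) I would choose explicit representatives for $\Gam_0(\vDe_S)\bsl\Gam_0(2^{-1}\vDe_S)$ (as in \cite{UY}), expand $\sum_\gam g|\gam^*$ in Fourier coefficients, and then apply $\mathrm{pr}=(2^{-1/2}\mu\til{Y}(4)+1)/3$. Using the isomorphism $\wp_k:S_{k+1/2}(2^{-2}\vDe_S)\to S^+_{k+1/2}(2^{-1}\vDe_S)$, the result should be identifiable, up to an explicit nonzero scalar, with $g|\wp_k^{-1}+(2^{1/2}\mu)^{-1}g|\wp_k^{-1}\til{Y}(4)$, and the vanishing of $\mathrm{Tr}^S_2(g)$ then translates into the stated eigenvalue condition on $g|\wp_k^{-1}\til{Y}(4)$. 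The main obstacle will be this last step: the dyadic operator $\til{Y}(4)$ lacks the clean involutive behaviour of its odd-prime analogue, so one must carefully interleave $\wp_k^{-1}$ with the trace normalisation $1/2$ and the scalars built into $\mathrm{pr}$ before the cancellation becomes transparent.
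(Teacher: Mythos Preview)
Your overall plan matches the paper's: parts (1) and (2) are taken from \cite{UY}, and part (3) is reduced to the interplay between the half-trace from level $\vDe_S$ to level $2^{-1}\vDe_S$ and the operator $\til{Y}(4)$ on $M_{k+1/2}(2^{-1}\vDe_S)$. Two points deserve correction.

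For (2), the Fourier-coefficient formula you anticipate cannot be literally correct. If $(4\mu)^{-1}\til{Y}(8)$ sent $c_g(m)$ to $\bigl(\tfrac{(-1)^km}{2}\bigr)c_g(m)$, then its square would annihilate every coefficient with $(-1)^km\equiv 0\pmod 4$, contradicting (1). Already in the odd-prime case the analogue of \cite[Proposition~1.29]{U} gives
\[\eps_q^{2k+1}q^{-1/2}g|\til{Y}(q)=\sum_m\Bigl(\bigl(\tfrac{(-1)^km}{q}\bigr)c_g(m)+q^{k}c_g(m/q^2)\Bigr)q^m,\]
and it is the extra $c_g(m/q^2)$ term that makes the operator involutive while still yielding the equivalence with (\roman{one}). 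The dyadic formula in \cite{UY} has the same shape; the paper simply cites it.

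For (3), the paper bypasses your direct expansion by invoking a single identity from \cite{UY}: on $M^+_{k+1/2}(\vDe_S)$ the inverse $\wp_k^{-1}$ agrees with
\[g\longmapsto \tfrac12\Bigl(\sum_{\gam\in\Gam_0(\vDe_S)\bsl\Gam_0(2^{-1}\vDe_S)}g|\gam^*\Bigr)\bigl(3-2^{-1/2}\mu\,\til{Y}(4)\bigr).\]
Setting $h=\tfrac12\sum_\gam g|\gam^*\in M_{k+1/2}(2^{-1}\vDe_S)$ and writing $h=h_++h_-$ along the decomposition of Proposition~\ref{prop:21}, one gets at once $\Tr^S_2(g)=h|\mathrm{pr}=h_+$ and $g|\wp_k^{-1}=h_++4h_-$, so the eigenvalue condition on $g|\wp_k^{-1}$ under $\til{Y}(4)$ is equivalent to $h_+=0$. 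Your route would eventually recover this identity, but quoting it removes the ``main obstacle'' you flag. (A small slip: $\wp_k$ is an isomorphism $M_{k+1/2}(2^{-1}\vDe_S)\simeq M^+_{k+1/2}(\vDe_S)$, not one from level $2^{-2}\vDe_S$.)
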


\begin{proof}
See \cite{UY} for our assertion (1), (2). 
Our assertion (3) follows from Remark \ref{rem:41} (1) and the fact that $\wp_k^{-1}$ and the $\CC$-linear map
\[g\mapsto2^{-1}\biggl(\sum_{\gam\in\Gam_0(8M)\bsl\Gam_0(4M)}g|\gam^*\biggl)(3-2^{-1/2}\mu\til{Y}(4))\] 
agree on $M^+_{k+1/2}(8M)$ (see \cite{UY}). 
\end{proof}

We now introduce the intermediate space $M^S_{k+1/2}(\vDe_S)$. 

\begin{definition}\label{def:41}
The space $M^S_{k+1/2}(\vDe_S)$ consists of all functions $g\in M^+_{k+1/2}(\vDe_S)$ with the following properties: 
\begin{enumerate}
\renewcommand\labelenumi{(\theenumi)}
\item[(A)] $g$ satisfies (\roman{one}) for every prime $q\in\frkS_1$;
\item[(B)] $g$ satisfies (\roman{two}) for every prime $q\in\frkS_2$. 
\end{enumerate}
We put $S^S_{k+1/2}(\vDe_S)=S_{k+1/2}(\vDe_S)\cap M^S_{k+1/2}(\vDe_S)$. 
\end{definition}

Let $b$ and $d$ be positive divisors of $b_S$ and $d_S$ respectively, and $p$ and $\ell$ prime divisors of $b^{-1}b_S$ and $d^{-1}d_S$ respectively. 
We define the operator $P(p): M^+_{k+1/2}(4bd)\to M^+_{k+1/2}(4pbd)$ by setting 
\[g|P(p)=g+\eta_p(S)p^{1-k}(g|\til{T}(p^2)-g|U_k(p^2)), \]
and define the operator $Q(\ell): M^+_{k+1/2}(4bd)\to M^+_{k+1/2}(4\ell bd)$ by setting
\[g|Q(\ell)=(\ell+1)g|U_k(\ell^2)-\ell g|\til{T}(\ell^2) \]
for $g\in M^+_{k+1/2}(4bd)$. 
\begin{remark}\label{rem:42}
Let us note that
\begin{align*}
g|P(p)(\tau)
&=\sum_m\biggl(1+\eta_p(S)\biggl(\frac{(-1)^km}{p}\biggl)\biggl)(c_g(m)+\eta_p(S)p^kc_g(p^{-2}m))q^m\\
g|Q(\ell)(\tau)
&=\sum_{m\in\frkD_k}\biggl(c_g(\ell^2m)-\ell^k\biggl(\frac{(-1)^km}{\ell}\biggl)c_g(m)-\ell^{2k}c_g(\ell^{-2}m)\biggl)q^m. 
\end{align*}
\end{remark}

We define $g^*\in M^+_{k+1/2}(\vDe_S)$ by 
\[g^*=g|\prod_{p|b^{-1}b_S}P(p)\prod_{p|d^{-1}d_S}Q(p) \]
for $g\in M^+_{k+1/2}(4bd)$. 

The space $\frkS^S_{k+1/2}(4bd)$ consists of all functions $g\in S^{\new,+}_{k+1/2}(4bd)$ which satisfy the condition (\roman{one}) for every prime divisor $q$ of $b$. 

\begin{proposition}\label{prop:41}
The mapping $g\mapsto g^*$ induces a $\CC$-linear isomorphism $i_2:\oplus_{b,d\geq 1,\; b|b_S, \;d|d_S}\frkS^S_{k+1/2}(4bd)\simeq S^S_{k+1/2}(\vDe_S)$. 
\end{proposition}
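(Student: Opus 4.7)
The plan is to verify well-definedness, injectivity, and surjectivity separately, using the Atkin--Lehner--Kohnen direct sum decomposition of Proposition~\ref{prop:22}(1), which for $N=\vDe_S$ reads
\[
S^+_{k+1/2}(\vDe_S) = \bigoplus_{b|b_S,\,d|d_S}\;\bigoplus_{b''|b_S/b,\;d''|d_S/d} S^{\new,+}_{k+1/2}(4bd)\,\big|\,U_k((b''d'')^2),
\]
where the pair $(b,d)$ corresponds bijectively to the divisor $a=bd$ of $b_Sd_S$ since $b_S$ and $d_S$ are coprime.

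For well-definedness, take $g\in \frkS^S_{k+1/2}(4bd)$. Each $P(p)$, $Q(\ell)$ raises the level by a factor of its prime, preserves cuspidality, and preserves the plus space (via $U_k(a^2)=U(a^2)\wp_k$), so $g^*\in S^+_{k+1/2}(\vDe_S)$. To check condition (A) at $p\in\frkS_1$: if $p\mid b_S/b$ the Fourier formula of Remark~\ref{rem:42} for $P(p)$ carries the vanishing factor $1+\eta_p(S)\bigl(\tfrac{(-1)^km}{p}\bigr)$; if $p\mid b$ the condition holds for $g$ by hypothesis and is preserved by the subsequent $P(p')$, $Q(\ell')$ at primes $\neq p$ since those are Hecke-polynomial operators at primes coprime to $p$ (for such $q$, $c_g(q^2m)$ has the same quadratic symbol at $p$ as $c_g(m)$). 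To check condition (B) at $\ell\in\frkS_2$: if $\ell\mid d_S/d$ the definition of $Q(\ell)$ gives $\Tr^S_\ell\circ Q(\ell)=0$ directly; if $\ell\mid d$ the newform property yields $\Tr^{4bd}_{4bd/\ell}(g)=0$, an index calculation transfers this to $\Tr^{\vDe_S}_{\vDe_S/\ell}(g)=0$, and the remaining $P(p')$, $Q(\ell')$ commute with $\Tr^S_\ell$.

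For injectivity, Remark~\ref{rem:42} shows the image of $\frkS^S_{k+1/2}(4bd)$ lies inside the $(b,d)$-summand of the decomposition above, so distinct $(b,d)$ map to linearly independent subspaces. Inside a single summand the operator $g\mapsto g^*$ has the shape $g + (\text{terms involving } U_k(\cdot^2))$, so its projection to the $b''=d''=1$ corner is $g$ acted on by a nonzero Hecke polynomial (Deligne's bound controls the scalars from the $\til{T}(p^2)$ factors, and Steinberg-style nonvanishing of $\lam_\ell$ handles the $Q(\ell)$ factors), forcing $g=0$ whenever $g^*=0$. For surjectivity, take a Hecke eigenform $h\in S^S_{k+1/2}(\vDe_S)$; the decomposition yields $g\in S^{\new,+}_{k+1/2}(4a)$ and scalars $c_{d'}$ with $h=\sum_{d'}c_{d'}\,g|U_k(d'^2)$. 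Writing $a=bd$, condition (A) at each $p\mid b_S/b$ and condition (B) at each $\ell\mid d_S/d$ each cut the two-dimensional span $\{g,\, g|U_k(p^2)\}$ or $\{g,\, g|U_k(\ell^2)\}$ down to one dimension, so the $c_{d'}$ are uniquely determined up to an overall scalar and explicit computation identifies the resulting line with $\CC\cdot g^*$. Conditions on $h$ at $p\mid b$ and $\ell\mid d$ then descend to $g$, placing $g$ in $\frkS^S_{k+1/2}(4bd)$.

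The main obstacle is the careful bookkeeping of how conditions (A) and (B) propagate through the $P$ and $Q$ operators at other primes, and in particular their commutation with trace operators across different level changes. Lemmas~\ref{lem:41} and \ref{lem:42} handle the prime $p=2$ in parallel to the odd case; once the commutation statements are in place, the rest reduces to Fourier coefficient calculations and a dimension count at each Hecke eigenspace.
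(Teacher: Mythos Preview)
Your overall strategy---check well-definedness, injectivity, and surjectivity against the newform decomposition of Proposition~\ref{prop:22}(1)---is the same as the paper's. The well-definedness and surjectivity sketches are broadly in line with what the paper does, though your claim that ``the definition of $Q(\ell)$ gives $\Tr^S_\ell\circ Q(\ell)=0$ directly'' understates the work: the paper carries out an explicit operator computation with $\til W(p)$, $\til Y(p)$ (and the analogous $p=2$ version via $\wp_k^{-1}\til Y(4)$) to verify this, appealing to Lemma~\ref{lem:41}(3) and Lemma~\ref{lem:42}(3). That calculation is not hard but it is not immediate from the definition.

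The real gap is in your injectivity argument. Projecting $g^*$ to the $b''=d''=1$ corner does \emph{not} give $g$ plus $U_k$-terms: the operators $P(p)$ and $Q(\ell)$ also contain $\til T(p^2)$ and $\til T(\ell^2)$, which preserve level $4bd$. So the projection is $g$ times the Hecke polynomial $\prod_{p\mid b_S/b}(1+\eta_p(S)p^{1-k}\lambda_p)\prod_{\ell\mid d_S/d}(-\ell\lambda_\ell)$, and you need this scalar to be nonzero. Deligne does not help: $|\lambda_p|\le 2p^{k-1/2}$ does not exclude $\lambda_p=-\eta_p(S)p^{k-1}$. Worse, your ``Steinberg-style nonvanishing of $\lambda_\ell$'' is misplaced: here $\ell\nmid bd$, so the local component at $\ell$ is unramified principal series, and $\lambda_\ell=0$ is entirely possible. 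Thus the $b''=d''=1$ projection can vanish for nonzero $g$.

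The fix is to project to a different corner. From $g|P(p)=0$ the direct sum forces both $g+\eta_p(S)p^{1-k}g|\til T(p^2)=0$ and $g|U_k(p^2)=0$; since $U_k(p^2)$ is injective on $S^{\new,+}_{k+1/2}(4bd)$ (its image is a direct summand of $S^+_{k+1/2}(4pbd)$), the second equation alone gives $g=0$. The same reasoning handles $Q(\ell)$. This is exactly how the paper argues injectivity, avoiding any appeal to Ramanujan-type bounds.
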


\begin{proof}
Let $g\in\frkS^S_{k+1/2}(4bd)$. 
Our first task is to prove $g^*\in S^S_{k+1/2}(\vDe_S)$. 
In view of the definition of $\frkS^S_{k+1/2}(4bd)$ and Remark \ref{rem:42}, we conclude that $g^*$ satisfies (A) and $\Tr^S_p(g^*)=0$ for every prime divisor $p$ of $d$. 

Suppose that $d^{-1}d_S$ is even. 
Since $S_{k+1/2}^+(4bd)$ is the $2^{3/2}\mu$-eigenspace of $\til{Y}(4)$ by Proposition \ref{prop:21},  
\begin{align*}
g|Q(2)\wp_k^{-1}\til{Y}(4)
&=3g|U_k(4)\wp_k^{-1}\til{Y}(4)-2g|\til{T}(4)\wp_k^{-1}\til{Y}(4)\\
&=3g|\wp_k^{-1}U(4)\til{Y}(4)-2g|\til{T}(4)\til{Y}(4)\\
&=3g|U(4)\til{Y}(4)-2^{5/2}\mu g|\til{T}(4). 
\end{align*}
Since
\[\til{T}(4)=\tfrac{3}{2}\cdot U(4)\mathrm{pr}=2^{-3/2}\mu U(4)\til{Y}(4)+\thalf U(4)\]
by the definition of $\til{T}(4)$, we have
\[g|Q(2)\wp_k^{-1}\til{Y}(4)=-2^{1/2}\mu(3g|U(4)-2g|\til{T}(4))=-2^{1/2}\mu g|Q(2)\wp_k^{-1}. \]

If $p$ is an odd prime divisor $d^{-1}d_S$, then we have
\[g|U(p^2)\til{W}(p)=g|\til{T}(p^2)\til{\del}_p-p^{k/2-3/4}g|U(p), \]
where we use relations   
\begin{align*}
g|\til{W}(p)^2&=\eps_p^{-2k-1}g, & g|\til{T}(p^2)-g|U(p^2)&=\eps_p^{2k+1}p^{k-3/2}g|\til{Y}(p) 
\end{align*} 
(see \cite[Proposition 1.18, (3.12)]{U}). 
Observe that
\begin{align}
g|Q(p)\til{W}(p)\label{tag:41}
&=(p+1)g|U(p^2)\til{W}(p)-pg|\til{T}(p^2)\til{\del}_p\\
&=g|\til{T}(p^2)\til{\del}_p-(p+1)p^{k/2-3/4}g|U(p). \notag  
\end{align}
Now $g|Q(p)\til{W}(p)\til{Y}(p)$ is equal to 
\begin{multline*}
p^{-k/2+3/4}g|\til{T}(p^2)\til{\del}_pU(p)\til{W}(p)-(p+1)g|U(p^2)\til{W}(p)\\
=pg|\til{T}(p^2)\til{\del}(p)-(p+1)(g|\til{T}(p^2)\til{\del}_p-p^{k/2-3/4}g|U(p))=-g|Q(p)\til{W}(p). 
\end{multline*}

By virtue of Lemma \ref{lem:41} (3) and \ref{lem:42} (3), $g^*$ satisfies (B) and hence $g^*\in S^S_{k+1/2}(\vDe_S)$ as expected. 

If $g|P(p)=0$, then $g=0$ since Proposition \ref{prop:22} (1) implies that 
\begin{align*}
g+\eta_p(S)p^{1-k}g|\til{T}(p^2)&=0, &g|U_k(p^2)&=0. 
\end{align*}
In a similar fashion we can show $g=0$ if $g|Q(p)=0$. 
Consequently, $i_2$ is injective. 

For a prime divisor $p$ of $4^{-1}\vDe_S$ and for $g\in S^S_{k+1/2}(\vDe_S)$, Proposition \ref{prop:22} (1) gives 
\begin{align*}
g_1&\in \oplus_{p|c,\; ac|4^{-1}\vDe_S}S^{\new,+}_{k+1/2}(4c)|U_k(a^2), & 
g_2,\; g_3&\in S^+_{k+1/2}(p^{-1}\vDe_S)
\end{align*} 
such that 
\[g=g_1+g_2+g_3|U_k(p^2). \] 
Put $h_1=g_2+g_3|U_k(p^2)$. 
Since $\til{Y}\Pr(p)$ fixes the space of newforms, Lemma \ref{lem:41} and \ref{lem:42} imply that $g_1$, $h_1\in S^S_{k+1/2}(\vDe_S)$ (see Lemma \ref{lem:21} for the definition of $\til{Y}\Pr(p)$). 

Assume that $p\in\frkS_1$. 
Put $h_2=h_1+\eta_p(S)p^{k-1}g_3|P(p)$. 
Since 
\[h_2=g_2+\eta_p(S)p^{k-1}g_3+g_3|\til{T}(p^2)\]
is an element of $S^+_{k+1/2}(p^{-1}\vDe_S)$ and $h_2|\til{Y}\Pr(p)=\eta_p(S)h_2$, we have $h_2=0$ and $h=g_1-\eta_p(S)p^{k-1}g_3|P(p)$ (see \cite{UY}). 

Assume that $p\in\frkS_2$. 
Put $h_3=h_1-(p+1)^{-1}g_3|Q(p)$. 
Since 
\[h_3=g_2+(p+1)^{-1}pg_3|\til{T}(p^2)\in S^+_{k+1/2}(p^{-1}\vDe_S), \]
we have $h_3=\Tr^S_p(h_3)=0$. 
Thus $h=g_1+(p+1)^{-1}g_3|Q(p)$. 
We can establish the surjectivity of $i_2$ by induction. 
\end{proof}


\section{\bf Proof of Theorem \ref{thm:31}}\label{sec:5}

We do not require $n$ to be odd in the following lemma. 

\begin{lemma}\label{lem:51}
Let $(U, Q)$ be a non-degenerate quadratic space over $\QQ$ with $\dim U\geq 4$ for which $U_v=U\otimes_\QQ\QQ_v$ is isotropic for some place $v$, $L$ a lattice on $U$ and $L_p$ its closure in $U_p$. 
If a non-zero rational number $a\in Q[U]$ satisfy $a\in Q[L_p]$ for every prime $p$, then there exists an element $z\in U$ satisfying the following conditions: 
\begin{enumerate}
\renewcommand\labelenumi{(\theenumi)}
\item[(\roman{one})] $z\in L_p$ for every prime $p\neq v$; 
\item[(\roman{two})] $Q[z]=a$. 
\end{enumerate}
\end{lemma}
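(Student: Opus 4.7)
The plan is to combine Witt's extension theorem with strong approximation for the spin group of $Q$. Using $a\in Q[U]$, I fix once for all $y_0\in U$ with $Q[y_0]=a$. For each prime $p$, the hypothesis $a\in Q[L_p]$ supplies $y_p\in L_p$ with $Q[y_p]=a$, and at almost all primes I may take $y_p=y_0$. Since $Q$ is non-degenerate and $y_0,y_p$ have the same non-zero length in $U_p$, Witt's theorem furnishes $g_p\in O(U_p)$ with $g_p(y_0)=y_p$; arranging $\det g_p=1$ by composing with a reflection along an anisotropic vector in $y_p^{\perp}$ (which exists because $\dim y_p^{\perp}\geq 3$), I may assume $g_p\in SO(U_p)$. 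The adelic datum $(g_p)_p$ then lies in $SO(U)(\AAf)$ and equals the identity outside a finite set of primes.

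The heart of the argument is to approximate $(g_p)_{p\ne v}$ by a global isometry $\gamma\in SO(U)(\QQ)$ in the sense that $\gamma g_p^{-1}\in K_p:=\mathrm{Stab}_{SO(U_p)}(L_p)$ for every prime $p\ne v$. Indeed, setting $z:=\gamma(y_0)$ one has $Q[z]=a$, and for each such $p$, $z=(\gamma g_p^{-1})(y_p)\in(\gamma g_p^{-1})(L_p)=L_p$, which is the required conclusion. The existence of $\gamma$ is precisely strong approximation for $SO(U)$ away from the place $v$ applied to the prescribed open subset $\prod_{p\ne v}g_pK_p$ of $SO(U)(\AAf^{\{v\}})$. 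The hypotheses $\dim U\geq 4$ and isotropy of $U_v$ make $\Spin(U)$ a simply connected semisimple group with $\Spin(U)(\QQ_v)$ non-compact, so by the Eichler--Kneser theorem $\Spin(U)$ satisfies strong approximation with respect to $\{v\}$.

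The main obstacle is the transfer from $\Spin(U)$ to $SO(U)$, i.e.\ controlling the spinor-norm obstruction to realizing $(g_p)_{p\ne v}$ inside the image of $\Spin(U)(\AAf^{\{v\}})$. I arrange this by modifying each $g_p$ on the right by a suitable element $h_p\in SO(y_0^{\perp})(\QQ_p)$; such a modification preserves the condition $g_p(y_0)=y_p$ since $h_p$ fixes $y_0$. Because $\dim y_0^{\perp}\geq 3$, the local spinor-norm map $SO(y_0^{\perp})(\QQ_p)\to\QQ_p^{\times}/\QQ_p^{\times 2}$ is surjective at every $p$, so any obstruction can be killed prime by prime (it is already trivial at almost all $p$, so only finitely many corrections are needed). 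Once this normalization is in place the adelic datum admits a lift to $\Spin(U)(\AAf^{\{v\}})$, strong approximation for $\Spin(U)$ produces a global element projecting to the desired $\gamma$, and the vector $z=\gamma(y_0)$ satisfies (i) and (ii).
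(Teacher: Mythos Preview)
Your overall strategy---Witt's theorem to produce local isometries $g_p$, then strong approximation for $\Spin(U)$ away from the isotropic place $v$---is exactly the standard argument, and it is essentially what lies behind the reference the paper invokes: the paper does not prove the lemma at all but simply cites \cite[Lemma~6.2.3]{Ki}. So on the level of approach you are in full agreement with the literature.

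There is, however, one inaccurate step in your spinor-norm correction. You assert that because $\dim y_0^{\perp}\geq 3$, the spinor norm $\theta:SO(y_0^{\perp})(\QQ_p)\to\QQ_p^{\times}/\QQ_p^{\times 2}$ is surjective at every $p$. This is false in general: when $\dim U=4$ the space $y_0^{\perp}$ is ternary, and an \emph{anisotropic} ternary form over $\QQ_p$ fails to represent exactly one square class, so the spinor norm on its special orthogonal group has image of index~$2$. Hence you cannot always kill $\theta_p(g_p)$ by right-multiplication by some $h_p\in SO(y_0^{\perp})(\QQ_p)$ alone.

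The usual repair is not to trivialise the spinor norms place by place but to absorb them into a single global element, exploiting the freedom at $v$. Since $\dim U\geq 4$, the form $Q$ represents every class of $\QQ_p^{\times}/\QQ_p^{\times 2}$ at each finite $p$ (an anisotropic quaternary form is a scaled quaternion norm form, which is universal), so $\theta:SO(U)(\QQ_p)\to\QQ_p^{\times}/\QQ_p^{\times 2}$ is surjective. One then finds, by weak approximation and Hasse--Minkowski (using that there is no lattice condition at $v$ nor at $\infty$), an element $\delta\in SO(U)(\QQ)$ with $\theta_p(\delta)=\theta_p(g_p)$ for every finite $p\neq v$. The corrected family $(\delta^{-1}g_p)_{p\neq v}$ now lies in the image of $\Spin$, strong approximation applies, and $\gamma=\delta\gamma_0$ with $\gamma_0$ coming from $\Spin(U)(\QQ)$ gives the vector $z=\gamma(y_0)$ exactly as you intended. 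Alternatively one can combine left-modification by $k_p\in K_p$ with your right-modification by $h_p$; together $\theta(K_p)$ and $\theta(SO(y_0^{\perp})(\QQ_p))$ do cover $\QQ_p^{\times}/\QQ_p^{\times 2}$, but this requires a separate verification.
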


\begin{proof}
This is a special case of \cite[Lemma 6.2.3]{Ki}. 
\end{proof}

In the rest of this section we suppose that $n$ is odd and $k$ is a positive integer such that $\kap=k+\tfrac{n+1}{2}$ is even. 
For each rational prime $p$ and $\ell\in\ZZ_p$, we put
\[\bfa_{S,p}(\ell)=\begin{cases}
1 &\tx{if $2\neq p\in\frkS_0$. }\\
1+\eta_p(S)\biggl(\dfrac{(-1)^k\Lam_S\ell}{p}\biggl) &\tx{if $2\neq p\in\frkS_1$. }\\
p\del(p\nmid \ell)+1 &\tx{if $2\neq p\in\frkS_2$. }\\
\del(\Lam_S\ell\in\frkD_k) &\tx{if $p=2\in\frkS_0$. }\\
\del(\Lam_S\ell\in\frkD_k)\biggl(1+\eta_2(S)\biggl(\dfrac{(-1)^k\Lam_S\ell}{2}\biggl)\biggl) &\tx{if $p=2\in\frkS_1$. }\\
\del(\Lam_S\ell\in\frkD_k)+3\del(\Lam_S\ell\notin\frkD_k) &\tx{if $p=2\in\frkS_2$. }
\end{cases}\]
 
\begin{lemma}\label{lem:52}
Let $\ell\in\NN$. The following conditions are equivalent: 
\begin{enumerate}
\renewcommand\labelenumi{(\theenumi)}
\item[(\roman{one})] all primes $p$ satisfy $\bfa_{S,p}(\ell)\neq 0$; 
\item[(\roman{two})] there exists $(a,\alp)\in\calt^+$ such that $\ell=D_{a,\alp}$. 
\end{enumerate}
Moreover, $\bfa_S(\ell)=\prod_p\bfa_{S,p}(\ell)$, where $\bfa_S(\ell)$ is defined in Lemma \ref{lem:14}.
\end{lemma}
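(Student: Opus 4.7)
The plan is to reduce both parts of the statement to the same local counting problem by reformulating (ii) as a congruence in $L^*/L$, splitting the count prime-by-prime via the Chinese Remainder Theorem, and matching each local factor with $\bfa_{S,p}(\ell)$ through a Jordan-decomposition computation.

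First I unpack (ii). For $\alp\in L^*$ the equation $\ell=D_{a,\alp}=D_S(a-S[\alp]/2)$ forces $a=\ell/D_S+S[\alp]/2$, and the constraint $a>S[\alp]/2$ in the definition of $\calt^+$ is automatic from $\ell>0$. Hence $(a,\alp)\in\calt^+$ with $\ell=D_{a,\alp}$ amounts to $a\in\ZZ$, i.e., $S[\alp]/2+\ell/D_S\in\ZZ$. Since $S[L]\subset 2\ZZ$ and $S(L^*,L)\subset\ZZ$, this congruence depends only on the class of $\alp$ modulo $L$, so (ii) becomes the existence of $\mu\in L^*/L$ with $S[\mu]/2\equiv-\ell/D_S\pmod{\ZZ}$. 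The number of such $\mu$ is precisely the quantity $\bfa_S(\ell)$ introduced in Lemma~\ref{lem:14}, so (ii) is equivalent to $\bfa_S(\ell)\neq 0$.

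The orthogonal decomposition $L^*/L\cong\bigoplus_{p\mid\det S}L^*_p/L_p$ together with $\QQ/\ZZ=\bigoplus_p\QQ_p/\ZZ_p$ makes the congruence split prime-by-prime, yielding
\[
\bfa_S(\ell)=\prod_pN_p(\ell),\qquad N_p(\ell):=\#\{\mu_p\in L^*_p/L_p\mid S[\mu_p]/2+\ell/D_S\in\ZZ_p\}.
\]
For $p\nmid 2\det S$, $L^*_p=L_p$ and $v_p(D_S)=0$, so $N_p(\ell)=1=\bfa_{S,p}(\ell)$. For $p\in\frkS_1\cup\frkS_2\cup\{2\}$, the key input is the local Jordan decomposition dictated by maximality (cf.~\cite[\S 9]{E}): for odd $p\in\frkS_1$ one has $L_p=L_p^\circ\perp\ZZ_p e$ with $L_p^\circ$ unimodular and $S[e]\in p\ZZ_p^\times$, so $L^*_p/L_p\cong\ZZ/p\ZZ$ is generated by $e/p$ and the congruence $k^2S[e]/(2p^2)\equiv -\ell/D_S\pmod{\ZZ_p}$ has exactly $1+\eta_p(S)\biggl(\frac{(-1)^k\Lam_S\ell}{p}\biggr)$ solutions by the standard Legendre-symbol count. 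The parallel treatment of odd $p\in\frkS_2$ uses $L_p=L_p^\circ\perp pA$ with $A$ the anisotropic binary form over $\ZZ_p$ and gives $N_p(\ell)=p\,\del(p\nmid\ell)+1=\bfa_{S,p}(\ell)$. The dyadic case relies on the analogous classification of binary $\ZZ_2$-lattices, and it is here that the parity condition $(-1)^k\Lam_S\ell\equiv 0,1\pmod 4$, equivalently $\Lam_S\ell\in\frkD_k$, appears.

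Once $N_p(\ell)=\bfa_{S,p}(\ell)$ is verified at every prime, the product formula $\bfa_S(\ell)=\prod_p\bfa_{S,p}(\ell)$ is immediate, and (i)$\iff$(ii) follows because (ii) is precisely the nonvanishing of $\bfa_S(\ell)$. The main obstacle is the dyadic case: $\ZZ_2$-lattices cannot be orthogonally diagonalized, so one must compute directly with both the hyperbolic and anisotropic binary planes and track residues modulo $8$ carefully; the $\frkD_k$-constraint ultimately records the discriminant of the quadratic extension $\QQ_2(((-1)^k\Lam_S\ell)^{1/2})/\QQ_2$.
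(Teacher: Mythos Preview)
Your approach is correct and genuinely more direct than the paper's. The key difference is your first step: you observe immediately that (ii) is equivalent to the existence of $\mu\in L^*/L$ with $S[\mu]/2+\ell/D_S\in\ZZ$, i.e., to $\bfa_S(\ell)\neq 0$, and then split this count prime-by-prime via the Chinese Remainder Theorem. The paper instead embeds the problem into the auxiliary quadratic space $(U,Q)$ of rank $n+2$ defined by $Q[ae+\alp+bf]=D_S(ab-S[\alp]/2)$, rephrases (ii) as $\ell\in Q[L_1^*]$, and invokes the local--global principle of Lemma~\ref{lem:51} (Kitaoka) to pass from the local conditions to the global one; only then does it arrive at the same local counting identity~(\ref{tag:51}) that you reach directly. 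Your route thus eliminates the need for Lemma~\ref{lem:51} altogether, which is a real simplification. On the local side the two proofs diverge again: the paper computes the count through the hyperbolic-plus-anisotropic decomposition of the $\frka$-maximal lattice $L_{1,p}$ (writing the anisotropic kernel in terms of quaternion orders when $\eta_p(S)=-1$), whereas you work with the Jordan decomposition of $L_p$ itself. Both are valid; the paper's anisotropic-kernel description handles odd and dyadic primes in a more uniform way and makes the appearance of $\eta_p(S)$ transparent, while your Jordan-form approach is more elementary but, as you note, requires a separate and more delicate analysis at $p=2$. One point you should make explicit when you write this up in full is how $\eta_p(S)$ is read off from the unit $u$ in your rank-one Jordan block $S[e]=pu$; this is where the paper's quaternionic description pays off, since $\eta_p(S)=\pm1$ there corresponds directly to $\dim Z_p\in\{1,3\}$.
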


\begin{proof}
We define the quadratic form $Q$ on $U=\QQ e\oplus X\oplus \QQ f$ by 
\[Q[ae+\alp+bf]=D_S(ab-S[\alp]/2). \]
Put $L_1=\ZZ e\oplus L\oplus\ZZ f$ and $L_1^*=\ZZ e\oplus L^*\oplus\ZZ f$. 
To prove the first part, it suffices to show 
\[Q[L_1^*]=\{\ell\in\ZZ\;|\;\bfa_{S,p}(\ell)\neq 0\tx{ for all }p\}. \]

We first note that the case $n=1$ is immediate since 
\begin{align*}
S/2&=b_S, & D_S&=4b_S, & L^*&=(2b_S)^{-1}L
\end{align*}
in this case. 
We hereafter assume that $n\geq 3$. 
Applying Lemma \ref{lem:51} with $v=\infty$, we have only to show 
\beq
\bfa_{S,p}(\ell)=\sharp\{\alp\in L^*_{1,p}/L_{1,p}\;|\;Q[\alp]\equiv\ell\pmod{D_S}\}\label{tag:51}
\eeq
for every prime $p$ and $\ell\in\ZZ_p$.

Fix a prime $p$ and put $\frka=D_S\ZZ_p$. 
Since $L_{1,p}$ is $\frka$-maximal, Lemma 6.5 of \cite{Sh2} gives a set of elements $\{e_i,f_i\}$ and an anisotropic kernel $Z_p$ of $(U_p,Q)$ such that  
\begin{gather*}
L_{1,p}=\sum_i(\ZZ_p f_i+\frka e_i)+M_p, \quad\quad
M_p=\{\alp\in Z_p\;|\; Q[\alp]\in\frka\}, \\
Q(e_i,e_i)=Q(f_j,f_j)=0, \quad 2Q(e_i,f_j)=\del(i=j)\; \tx{ for every $i$ and $j$}.   
\end{gather*}
Then we can observe that
\[L^*_{1,p}=\sum_i(\ZZ_p f_i+\frka e_i)+M^*_p, \]
where 
\[M^*_p=\{\alp\in Z_p\;|\;2Q(\alp,\bet)\in\frka \tx{ for every }\bet\in M_p\}. \]

Noting that  
\[(-1)^{(n+3)/2}\det Q=(-1)^{k+1}\Lam_S\in\QQ_p^\times/\QQ_p^{\times2} \]
(see (\ref{tag:11})), we have exactly two possibilities for the isometry class $Q$ distinguished by the sign $\eta_p(S)$. 
\begin{enumerate}
\renewcommand\labelenumi{(\theenumi)}
\item[(1)] Suppose that $\eta_p(S)=1$. 
Since $\dim Z_p=1$, the quadratic space $Z_p$ is isometric to $(\QQ_p, (-1)^k\Lam_S)$ by the observation above. We of course have $p\notin\frkS_2$ and conclude that 
\begin{align*}
M_p&=2b_S\ZZ_p, & 
M^*_p&=\ZZ_p. 
\end{align*}
\item[(2)] Next suppose that $\eta_p(S)=-1$. 
Let $H$ be a quaternion division algebra over $\QQ_p$, $\nu$ the reduced norm on $H$ and $B$ the set of pure quaternions in $H$. 
Since $\dim Z_p=3$, the quadratic space $Z_p$ is isometric to $(B, (-1)^{k+1}\Lam_S\cdot\nu)$. 
Letting $\calo$ be the maximal compact subring of $H$ and $\frkP$ the maximal ideal of $\calo$, we have   
\begin{align*}
M_p&=\begin{cases}
2\frkP\cap B &\tx{if $p|b_S$, }\\
2\calo\cap B &\tx{if $p\nmid b_S$, }
\end{cases} & 
M_p^*&=\begin{cases}
\calo\cap B &\tx{if $p|b_S$. }\\
\frkP^{-1}\cap B &\tx{if $p\nmid b_S$. }
\end{cases} 
\end{align*}
Note that $s_p(S)\geq 1$ in this case. 
\end{enumerate} 

We obtain (\ref{tag:51}) in both cases as expected. 
\end{proof}

Our immediate goal is to define a canonical $\CC$-linear map $i_1:J_{\kap,S}\to M^S_{k+1/2}(\vDe_S)$ whose restriction induces an isomorphism of $J^M_{\kap,S}$ onto $M^S_{k+1/2}(\vDe_S)$. 
To do this, we need some representation theoretic background for theta functions. 

Let $W=\QQ^2$ (row vectors) with the symplectic form defined via 
\begin{align*}
\La x, y\Ra&=xJ\trs y, & J&=\mtrx{0}{-1}{1}{0}. 
\end{align*} 
Let $\WW=X\otimes_\QQ W$ with a symplectic form $\ll\;,\;\gg=S(\;,\;)\otimes \La\;,\;\Ra$, and $\wtl{\Sp(\WW)}$ the metaplectic covering of $\Sp(\WW)(\AA)$. 
For any subgroup $H$ of $\Sp(\WW)$, we denote by $\wtl{H}$ the pull-back of $H$ in $\wtl{\Sp(\WW)}$. 

Let $v$ be a place of $\QQ$. 
As is well-known, $\wtl{\SL_2(\QQ_v)}=\wtl{\Sp(W)(\QQ_v)}$ is set-theoretically the product $\SL_2(\QQ_v)\times\{\pm1\}$ and the group law is given by $(g_1,\zet_1)(g_2,\zet_2)=(g_1g_2, c_v(g_1,g_2)\zet_1\zet_2)$, 
where $c_v(g_1,g_2)$ is the Kubota $2$-cocycle on $\SL_2(\QQ_v)$. 

The metaplectic cover $\wtl{\SL_2(\QQ_p)}\to\SL_2(\QQ_p)$ splits over the subgroup 
\[\Gam_1(4;\ZZ_p)=\Big\{\mtrx{a}{b}{c}{d}\in\SL_2(\ZZ_p)\;|\;c\equiv 0,\; d\equiv 1\pmod{4\ZZ_p}\Big\}. \] 
Of course, $\Gam_1(4;\ZZ_p)=\SL_2(\ZZ_p)$ if $p$ is odd. 
The splitting $\gam\mapsto(\gam,\sig_p(\gam))$ is given by
\[\sig_p\Big(\mtrx{a}{b}{c}{d}\Big)
=\begin{cases}
(c,d)_p&\tx{if $cd\neq0$ and $\ord_pc$ is odd. }\\
1 &\tx{otherwise. }
\end{cases}\]
The metaplectic cover $\wtl{\SL_2(\RR)}\to\SL_2(\RR)$ splits over $\Gam_1(4)$ and the splitting $\gam\mapsto(\gam,\sig_\infty(\gam))$ is given by 
\[\sig_\infty\Big(\mtrx{a}{b}{c}{d}\Big)=\Big(\frac{c}{d}\Big). \]
We identify $\Gam_1(4;\ZZ_p)$ and $\Gam_1(4)$ with the images of the splittings if there is no fear of confusion. 

We can define the automorphy factor $\jmath(\til\gam, \tau)$ on $\wtl{\SL_2(\RR)}\times\frkH$ by 
\[\jmath\Big(\Big(\mtrx{a}{b}{c}{d},\zet\Big),\tau\Big)
=\begin{cases}
\zet(c\tau+d)^{1/2} &\tx{if $c\neq 0$. }\\
\zet \sqrt{d} &\tx{if $c=0$, $d>0$. }\\
-\zet\iu\sqrt{|d|} &\tx{if $c=0$, $d<0$. }
\end{cases}\]
The map $\til\gam=(\gam,\zet)\mapsto(\gam,\jmath(\til\gam,\tau))$ is an isomorphism of $\wtl{\SL_2(\RR)}$ onto the commutator subgroup of $\frkG$, which on $\Gam_1(4)$ coincides with the homomorphism $\gam\mapsto\gam^*$.  

The group $\wtl{\SL_2(\QQ_v)}$ acts on $H_S(\QQ_v)$ through $\SL_2(\QQ_v)$. 
Hence we obtain the group 
\[\wtl{J_S(\QQ_v)}=\wtl{\SL_2(\QQ_v)}H_S(\QQ_v). \] 
Then $\wtl{J_S(\RR)}$ acts on $\cald$ through $J_S(\RR)$, and for each integer $\ell$, the automorphy factor $\jmath_\ell$ on $\wtl{J_S(\RR)}\times\cald$ is defined by 
\begin{multline*}
\jmath_\ell([\xi,\eta,\zet]\til\gam,(\tau,w))\\
=\jmath(\til\gam,\tau)^\ell
\bfe\biggl(-\zet+\frac{cS[w]}{2(c\tau+d)}-\frac{S(\xi,w)}{c\tau+d}-\frac{S[\xi]\cdot\gam\tau}{2}\biggl),  
\end{multline*}
where $\gam=\Left * & * \\ c & d\Right$ is the image of $\til\gam$ in $\SL_2(\RR)$. 

The local Weil representation of $\wtl{\Sp(\WW)(\QQ_v)}$ associated to $\bfe_v$ yields a representation $\ome_v$ of $\wtl{\SL_2(\QQ_v)}$. 
This representation can be realized on the Schwartz-Bruhat space $\cals(X_v)$ on $X_v$ with 
\begin{gather*}
\ome_v\Big(\mtrx{a}{b}{0}{a^{-1}},\zet\Big)l(\xi)=\zet\gam_{S,v}(1)\gam_{S,p}(a)^{-1}|a|_v^{n/2}\bfe_v(abS[\xi]/2)l(\xi a), \\
\ome_v(J,\zet)l(\xi)=\zet\gam_{S,v}(1)^{-1}\int_{X_v}l(\eta)\bfe_v(-S(\eta, \xi))d\eta. 
\end{gather*}
Here let $|\;|_v$ be the module of $\QQ_v^\times$ and $\gam_{S,v}$ the Weil constant with respect to $S$ and $\bfe_v$. 
The measure $d\eta$ is the self-dual Haar measure on $X_v$ with respect to the pairing $(\xi,\eta)\mapsto\bfe_v(S(\xi,\eta))$. 

The representation $\ome_v$ extends to the representation of $\wtl{J_S(\QQ_v)}$ by 
\beq
\ome_v([x,y,z])l(\xi)=\bfe_v(z+S(y,\xi))l(\xi+x).  \label{tag:52}
\eeq 
By the Stone-von Neumann theorem, (\ref{tag:52}) is a unique irreducible admissible representation of $H_S(\QQ_v)$ on which $[0,0,z]$ acts by $\bfe_v(z)$. 

We can identify $\wtl{\SL_2(\AA)}$ with the restricted direct product of $\wtl{\SL_2(\QQ_v)}$ with respect to $\{\Gam_1(4;\ZZ_p)\}$ divided by $\{(t_v)\in \oplus_v\{\pm1\} \;|\;\prod_vt_v=1\}$. 
Put $\wtl{J_S(\AA)}=\wtl{\SL_2(\AA)}H_S(\AA)$, $\wtl{J_S(\AAf)}=\wtl{\SL_2(\AAf)}H_S(\AAf)$ and $X_\AA=X\otimes_\QQ\AA$.  
The global Weil representation $\ome$ (resp. $\ome_\bff$) of $\wtl{J_S(\AA)}$ (resp. $\wtl{J_S(\AAf)}$) on $\cals(X_\AA)$ (resp. $\cals(X_\bff)$) is the restricted tensor product of $\ome_v$. 

For each $l\in\cals(X_\bff)$, we define $l\Pr\in\cals(X_\AA)$ by 
\[l\Pr(x,\xi)=e^{-\pi S[x]}l(\xi)\] 
for $x\in X_\infty$ and $\xi\in X_\bff$. 
The theta function $\vTh^S(\sig;l)$ is defined  by 
\begin{align*}
\vTh^S(\sig;l)&=\sum_{\alp\in X}\ome(\sig)l\Pr & 
\tx{for }\sig&\in J_S(\AA). 
\end{align*}
We now have an easy relation 
\begin{align*}
\vTh^S(g; l)&=\jmath_n(g, (\iu,0))^{-1}\vth^S_l(g(\iu, 0)), & g&\in\wtl{J_S(\RR)}, 
\end{align*}
where $\vth^S_l$ is the theta function defined in \S \ref{sec:1}. 

There is a unique splitting $\iot:\SL_2(\QQ)\to\wtl{\SL_2(\AA)}$, the image of which we identify with $\SL_2(\QQ)$, and $\vTh^S(g;l)$ is left invariant by $\SL_2(\QQ)$ by Weil's fundamental result. 
Let $\gam\in\SL_2(\QQ)$ and choose $\gam_\infty\in\wtl{\SL_2(\RR)}$ and $\gam_\bff\in\wtl{\SL_2(\AAf)}$ such that $\gam=\gam_\infty\gam_\bff$. 
Observe that 
\[\jmath_n(\gam_\infty,(\tau,w))^{-1}\vth^S_l(\gam(\tau,w))=\vth^S_{\ome_\bff(\gam_\bff^{-1})l}(\tau,w). \]
If $\gam\in\Gam_1(4)$, then we can take 
\begin{align*}
\gam_\infty&=(\gam,\sig_\infty(\gam)), & \gam_\bff&=\prod_p(\gam,\sig_p(\gam)), 
\end{align*} 
i.e., the splitting $\iot$ coincides with the map $\gam\mapsto\prod_v(\gam,\sig_v(\gam))$ on $\Gam_1(4)$. 
Note that there is no canonical choice of $\gam_\infty$ and $\gam_\bff$ if $\gam\notin\Gam_1(4)$. 

Suppose that $\gam\in\SL_2(\ZZ)$ and choose a matrix $u(\gam)_{\mu\nu}$ such that 
\[\ome_\bff(\gam_\bff^{-1})l_\nu=\sum_{\mu\in\Xi}u(\gam)_{\mu\nu}l_\mu\]
(see \S \ref{sec:1} for the definition of $l_\mu$). 
It follows that 
\[\vth^S_\nu\|_{n/2}\gam=t\sum_{\mu\in\Xi}u(\gam)_{\mu\nu}\vth^S_\mu\]
for some $t\in\CC$ with $|t|=1$. 
Since the natural inner product on $\cals(X_\AA)$ is $\wtl{\SL_2(\AA)}$-invariant, $u(\gam)_{\mu\nu}$ is a unitary matrix. 
Since $\{\vth^S_\mu\;|\;\mu\in\Xi\}$ is linearly independent, we have    
\beq
\phi_\nu\|_{k+1/2}\gam=t^{-1}\sum_{\mu\in\Xi}\oln{u(\gam)_{\mu\nu}}\phi_\mu \label{tag:53}
\eeq
for each $\phi\in J_{\kap,S}$. 
If $\gam\in\Gam_1(4)$, then more strongly 
\[\phi_\nu|\gam^*=\sum_{\nu\in\Xi}\oln{u(\gam)_{\mu\nu}}\phi_\mu. \]

Putting $\Xi(p)=L_p^*/L_p$, we can identify $\Xi$ with the direct sum $\oplus_p\Xi(p)$ in the obvious way. 
For $Q\in\NN$, we write $\Xi(Q)$ for the subset of $\Xi$ corresponding to $\{(\mu_p)\in\oplus_p\Xi(p)\;|\;\mu_p=0\tx{ for }p\nmid Q\}$. 
Next, we use these results to define the canonical map $i_1:J_{\kap,S}\to M^S_{k+1/2}(\vDe_S)$. 

\begin{lemma}\label{lem:53}
Given $\phi\in J_{\kap,S}$, we put $i_1(\phi)=\phi_0|\til{W}(4b_S)\wp_k$. 
Then $i_1(\phi)$ is an element of $M^S_{k+1/2}(\vDe_S)$. 
\end{lemma}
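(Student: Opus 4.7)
The plan is to verify in turn that $i_1(\phi) = \phi_0 | \til{W}(4b_S) \wp_k$ is (a) a modular form of weight $k+1/2$ on $\Gam_0(\vDe_S)$ with trivial character, (b) an element of the Kohnen plus subspace $M^+_{k+1/2}(\vDe_S)$, and (c) satisfies the local conditions (A) at each $q \in \frkS_1$ and (B) at each $q \in \frkS_2$ from Definition \ref{def:41}.

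For (a), I would expand $\phi = \sum_{\mu \in \Xi} \phi_\mu \vth^S_\mu$ as in (\ref{tag:12}) and apply the transformation law (\ref{tag:53}) for the theta components under $\SL_2(\ZZ)$. Proposition \ref{prop:11}(1) together with the linear independence of the $\vth^S_\mu$ forces $u(\gam)_{\mu 0} = 0$ for $\mu \ne 0$ and $\gam \in \Gam_0(\vDe_S)$, while $u(\gam)_{00}$ equals the Kronecker symbol $\bigl(\frac{4b_S}{d}\bigr)$. Hence by (\ref{tag:53}), $\phi_0$ is a modular form of weight $k + 1/2$ on $\Gam_0(\vDe_S)$ with Kronecker character $\bigl(\frac{4b_S}{\cdot}\bigr)$, and the Atkin--Lehner-type twist by $\til{W}(4b_S)$ trivialises this character while preserving holomorphy at the cusps, producing an element of $M_{k+1/2}(\vDe_S)$.

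For (b), by the definition of $\wp_k$ the Fourier expansion of $i_1(\phi)$ is supported on $m \in \frkD_k$, and Proposition \ref{prop:21} identifies the plus subspace as precisely those forms in $M_{k+1/2}(\vDe_S)$ whose Fourier support lies in $\frkD_k$ (equivalently, with the $2^{3/2}\mu$-eigenspace of $\til{Y}(4)$). For (c), each of the conditions (A) and (B) at a prime $q$ translates, via Lemmas \ref{lem:41} and \ref{lem:42}, into an eigenvalue identity for $i_1(\phi)$ under $\til{Y}(q)$ (or the $q = 2$ variants $\til{Y}(8)$ and $\wp_k^{-1}\til{Y}(4)$). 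I would reduce each such identity, using the definitions of $\til{W}(q)$ and $\til{Y}(q)$, to an identity on $\phi_0$, which in turn is controlled by the local Weil representation $\ome_q$ acting on the finite abelian group $\Xi(q) = L^*_q / L_q$. Since $S$ is maximal, the local lattice structure at $q$ is described explicitly (via an anisotropic kernel $Z_q$) in the proof of Lemma \ref{lem:52}, and from it one reads off the required sign: $\eta_q(S)$ in the $\frkS_1$-case and $-1$ in the $\frkS_2$-case.

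The main obstacle is precisely this third step: carrying out the local Weil-representation computation cleanly at each bad prime, especially at $q = 2$, where the splittings $\sig_2$, the extra factors $2^{1/2}\mu$ appearing in Lemma \ref{lem:42}, the Hasse invariant $\eta_2(S)$, the local Weil constant $\gam_{S,2}$, and the Kronecker-symbol normalisation must all be matched up consistently. The remainder of the argument is a formal unwinding of the definitions of $\til{W}(4b_S)$ and $\wp_k$.
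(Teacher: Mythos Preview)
Your overall strategy is sound and close to the paper's, but there is a genuine gap in step (b), and your route to condition (A) in step (c) differs from the paper's.

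\textbf{The gap at (b).} You claim that once $\phi_0|\til W(4b_S)\in M_{k+1/2}(\vDe_S)$, applying $\wp_k$ lands you in $M^+_{k+1/2}(\vDe_S)$ because the Fourier support is forced into $\frkD_k$. But $\wp_k$ is only a formal power-series projector; it does \emph{not} preserve $M_{k+1/2}(4M)$ when $M$ is odd --- rather, $\wp_k$ carries $M_{k+1/2}(4M)$ into $M^+_{k+1/2}(8M)$ (this is exactly the isomorphism recorded before Proposition~\ref{prop:21}). So when $2\in\frkS_0$ (hence $\vDe_S=D_S=4\cdot(\text{odd})$), your argument would only yield $i_1(\phi)\in M^+_{k+1/2}(2\vDe_S)$, which is too weak. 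The paper closes this gap by a separate computation: using the Weil representation formula
\[
\phi_0\|_{k+1/2}\gam_Q\;\backsim\;(\det S)_Q^{-1/2}\sum_{\mu\in\Xi(Q)}\phi_\mu
\]
with $Q=4$, one finds $\phi_0|\til W(4b_S)\til Y(4)\backsim 4(\det S)_2^{-1/2}\,\phi_0|\til W(4b_S)$. When $2\in\frkS_0$ one has $(\det S)_2=2$, so the eigenvalue is $2^{3/2}$ (up to the sign $\mu$), and Proposition~\ref{prop:21} then shows $\phi_0|\til W(4b_S)$ is \emph{already} in $M^+_{k+1/2}(D_S)$; hence $\wp_k$ acts as the identity and the level is correct. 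The same $\til Y(4)$-computation, when $2\in\frkS_2$ (so $(\det S)_2=8$), simultaneously verifies the condition $\Tr^S_2(i_1(\phi))=0$ via Lemma~\ref{lem:42}(3). You identified this $q=2$ analysis as the hard part of step (c), but in fact it is also what makes step (b) go through.

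\textbf{Condition (A): a different route.} For the primes $q\in\frkS_1$ the paper does not verify the $\til Y(q)$-eigenvalue identity as you propose. Instead it writes out the Fourier expansion of $i_1(\phi)$ explicitly as
\[
\sum_{\substack{a\in\NN,\ \mu\in\Xi(4b_S)\\ D_{a,\mu}/\Lam_S\in\frkD_k}} c_\phi(a,\mu)\,q^{D_{a,\mu}/\Lam_S}
\]
(up to a constant), and then invokes Lemma~\ref{lem:52}: the exponents $m=D_{a,\mu}/\Lam_S$ that occur are exactly those with $\bfa_{S,q}(\Lam_S m)\neq 0$ for every $q$, and for $q\in\frkS_1$ this says precisely $\bigl(\tfrac{(-1)^k m}{q}\bigr)\neq -\eta_q(S)$. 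This gives condition (A) directly from the Fourier support, with no further local computation. Your eigenvalue approach would also work, but the paper's route is shorter because Lemma~\ref{lem:52} has already packaged the local lattice analysis you mention.
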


\begin{proof}
We first note that $\phi_0\in M_{k+1/2}(D_S,\psi_{4b_S})$ by virtue of (\ref{tag:53}) and Proposition \ref{prop:11} (1). 
Let $Q$ be a positive divisor of $D_S$ such that $Q$ and $D_S/Q$ are coprime, and let $(\det S)_Q$ be a positive divisor of $\det S$, the set of prime divisors of which coincides with that of $(\det S, Q)$ and such that $(\det S)_Q$ and $(\det S)/(\det S)_Q$ are coprime. 
From (\ref{tag:53}) we have  
\[\phi_0\|_{k+1/2}\gam_Q\backsim (\det S)_Q^{-1/2}\sum_{\mu\in\Xi(Q)}\phi_\mu, \]
where we write $a\backsim b$ if there is a constant $t$ of absolute value one such that $a=tb$.  
Consequently, $i_1(\phi)$ equals 
\[(4b_S)^{-k/2-1/4}\sum_{\mu\in\Xi(4b_S)}\phi_\mu|\til\del_{4b_S}\wp_k(\tau)=\sum_{\begin{smallmatrix}a\in\NN,\; \mu\in\Xi(4b_S)\\ D_{a,\mu}/\Lam_S\in\frkD_k\end{smallmatrix}}c_\phi(a,\mu)q^{D_{a,\mu}/\Lam_S} \]
up to a constant. 
By Lemma \ref{lem:52}, we can see that $i_1(\phi)$ satisfies (A). 

If $2\notin\frkS_1$, then 
\begin{multline*}
\phi_0|\til{W}(4b_S)\til{Y}(4)
\backsim (\det S)_2^{-1/2}\sum_{\mu\in\Xi(4)}\sum_{j=0}^3\phi_\mu|\til{W}(b_S)\mtrx{1}{j}{0}{1}^*\til{W}(4)\\
\backsim 4(\det S)_2^{-1/2}\phi_0|\til{W}(b_S)\til{W}(4)\backsim 4(\det S)_2^{-1/2}\phi_0|\til{W}(4b_S).  
\end{multline*} 
Note that $(\det S)_2=2$ or $8$ according as $2\in\frkS_0$ or $2\in\frkS_2$. 
Proposition \ref{prop:21} shows that $\phi_0|\til{W}(4b_S)\in M^+_{k+1/2}(D_S)$ if $2\in\frkS_0$, and hence $i_1(\phi)\in M^S_{k+1/2}(\vDe_S)$.  
Lemma \ref{lem:42} (3) shows that $\Tr^S_2(i_1(\phi))=0$ if $2\in\frkS_2$. 

If $p\in\frkS_2$ is odd, then  
\begin{align*}
&\sum_{\mu\in\Xi(4b_S)}\phi_\mu|\til\del_{4b_S}\wp_k\til{W}(p)\til{Y}(p)\\
=&p^{-k/2+3/4}\sum_{\mu\in\Xi(4b_S)}\phi_\mu|\til{W}(p)U(p)\til\del_{4b_S}\wp_k\til{W}(p)\\
=&\sum_{\mu\in\Xi(4b_S)}\sum_{j=0}^{p-1}\phi_\mu|\gam_p^*\mtrx{1}{j}{0}{1}^*\til\del_{4b_S}\wp_k\til{W}(p)\\
=&\gam_{S,p}(1)^{-1}p^{-1}\sum_{\mu\in\Xi(4b_Sp)}\sum_{j=0}^{p-1}\phi_\mu|\mtrx{1}{j}{0}{1}^*\til\del_{4b_S}\wp_k\til{W}(p)\\
=&-\sum_{\mu\in\Xi(4b_S)}\phi_\mu|\til{\del}_{4b_S}\wp_k\til{W}(p). 
\end{align*} 
Here, we use $(\det S)_p=p^2$ and $\gam_{S,p}(1)=-1$. 
Lemma \ref{lem:41} (3) shows that $i_1(\phi)$ satisfies (B). 
Now our proof is complete. 
\end{proof}

To prove the surjectivity of $i_1$, we will construct a natural splitting $j_1$
\[\begin{array}{ccc}
 J^M_{\kap,S}  & \hookrightarrow & J_{\kap,S} \\
               & \nwarrow        & \downarrow i_1 \\
               & j_1        & M^S_{k+1/2}(\vDe_S) 
\end{array}\]
in the following proposition. 

\begin{proposition}\label{prop:51}
For $g\in M^S_{k+1/2}(\vDe_S)$, we let $\phi_0=g|\wp_k^{-1}\til{W}(4b_S)$ and put  
\[j_1(g)(\tau,w)=\sum_{\mu\in\Xi}\phi_\mu(\tau)\vth^S_\mu(\tau,w), \]
where $\phi_\mu$ is defined by (\ref{tag:14}), (\ref{tag:15}). 
Then $j_1(g)\in J^M_{\kap,S}$ and $i_1(j_1(g))$ equals $g$ up to a constant. 
\end{proposition}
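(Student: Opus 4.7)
The plan is to verify directly that the collection $\{\phi_\mu\}_{\mu\in\Xi}$ defined by (\ref{tag:14}), (\ref{tag:15}) satisfies the hypotheses of Lemma \ref{lem:12}, so that $j_1(g)\in J_{\kap,S}$, and then to check that this collection meets the Maass criterion of Lemma \ref{lem:13}. The definition (\ref{tag:15}) is designed so that $\phi_\mu$ depends on $\mu$ only through the residue class $S[\mu]/2\pmod{\ZZ}$: indeed, the factor $\bfa_S(D_{0,\mu})^{-1}$ and the exponential $\bfe(jS[\mu]/2)$ both have this property. Hence condition (\roman{two}) of Lemma \ref{lem:13} is automatic, and it remains only to show that $j_1(g)$ is a Jacobi form. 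The simpler part of the Jacobi transformation, namely the periodicity $\phi_\mu\|_{\kap-n/2}\Left 1 & 1 \\ 0 & 1\Right=\bfe(-S[\mu]/2)\phi_\mu$, is immediate from (\ref{tag:15}) by reindexing the $j$-sum.

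The main step, and the chief obstacle, is to verify the transformation formula
\[\phi_\mu\|_{\kap-n/2}J=(\det S)^{-1/2}\bfe(n/8)\sum_{\nu\in\Xi}\bfe(-S(\mu,\nu))\phi_\nu\]
required by Lemma \ref{lem:12}. I would carry this out as follows. Substituting (\ref{tag:15}) and using $J\Left 1 & j \\ 0 & D_S\Right=\Left 0 & -D_S \\ 1 & j\Right$, I would decompose each matrix in the latter class via the Euclidean algorithm applied to $(j,D_S)$ into a product of an element of $\Gam_0(D_S)$ and an upper triangular matrix. Inserting the modular transformation law of $h$, which is inherited from that of $g\in M^+_{k+1/2}(\vDe_S)$ through (\ref{tag:14}) together with Proposition \ref{prop:11}(1), this converts the right-hand side back into a weighted sum of $h\|\Left 1 & j' \\ 0 & D_S\Right$ for $j'$ running over $\ZZ/D_S\ZZ$. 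The weights that arise form, up to an overall root of unity, a matrix of Gauss sums governed by the Weil representation, which by the unitarity statement (\ref{tag:53}) and Lemma \ref{lem:11} is exactly the matrix $\bigl((\det S)^{-1/2}\bfe(n/8)\bfe(-S(\mu,\nu))\bigl)_{\mu,\nu}$ needed. Here the Kohnen plus condition on $g$ and conditions (A), (B) of Definition \ref{def:41} enter crucially to kill the Fourier coefficients that would otherwise obstruct the identification: they guarantee that the only $m$ contributing to $\phi_\mu$ are those with $\bfa_{S,p}(D_{a,\mu})\neq 0$ for every $p$, i.e., the discriminants coming from some $(a,\mu)\in\calt^+\cup\calt^0$ by Lemma \ref{lem:52}.

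Once the Jacobi transformation is established, Lemma \ref{lem:12} yields $j_1(g)\in J_{\kap,S}$ and Lemma \ref{lem:13} upgrades this to $j_1(g)\in J^M_{\kap,S}$; holomorphy and the correct shape of the Fourier expansion of $\phi_\mu$ follow at once from the Fourier expansion of $g$ together with the support constraint just described. Finally, the identity $i_1(j_1(g))=c\cdot g$ reduces to the computation
\[i_1(j_1(g))=\phi_0|\til{W}(4b_S)\wp_k=g|\wp_k^{-1}\til{W}(4b_S)^2\wp_k, \]
and since $\til{W}(4b_S)^2$ acts as a scalar on $M_{k+1/2}(\vDe_S)$ and $\wp_k$ acts as the identity on the plus space to which $g$ belongs, this equals a nonzero constant times $g$.
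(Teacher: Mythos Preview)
Your opening moves match the paper's---reduce to Lemma \ref{lem:12}, observe that the Maass condition of Lemma \ref{lem:13} is built into (\ref{tag:15}), and compute $\phi_\mu\|J$ by manipulating the matrices $\Left 1 & j \\ 0 & D_S\Right J=\Left j & -1 \\ D_S & 0\Right$---but the core step is incomplete and your plan misses the idea the paper actually uses.

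First, a genuine gap: your proposed decomposition of $\Left j & -1 \\ D_S & 0\Right$ as an element of $\Gam_0(D_S)$ times $\Left 1 & j' \\ 0 & D_S\Right$ exists only when $(j,D_S)=1$; for $(j,D_S)>1$ no such factorization is possible, and mere $\Gam_0(D_S)$-modularity of $h$ does not suffice. The paper handles this by first applying the relation $h|\til{Y}(D_r)\approx h$, where $D_r$ is the $D_S$-part of $r$; this is exactly where conditions (A) and (B) enter, via Lemmas \ref{lem:41} and \ref{lem:42}. Your description of (A), (B) as serving only to ``kill the Fourier coefficients that would otherwise obstruct the identification'' misplaces their role: they are eigenvalue hypotheses for the Atkin--Lehner-type operators $\til{Y}$, and it is this eigenvector behavior that makes the matrix reduction go through for all $j$.

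Second, even after the reduction, you assert without justification that the resulting weights form ``exactly the matrix needed.'' The paper does \emph{not} attempt this Gauss-sum computation. Instead it observes that the reduction yields $\phi_\mu\|J=\sum_\nu C_{\mu\nu}\,\phi_\nu$ where the constants $C_{\mu\nu}$ depend only on $S$ and on $k\bmod 8$, not on $g$. The required identity is then verified by taking $g=i_1(E_{\kap,S})$: the Jacobi Eisenstein series lies in $J^M_{\kap,S}$ with all theta components nonzero (Corollary \ref{cor:81}), so the identity holds for this choice and hence for every $g\in M^S_{k+1/2}(\vDe_S)$. The freedom to shift $k$ by multiples of $8$ lets one assume $\kap$ is large enough for absolute convergence. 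This universality-plus-Eisenstein-witness argument is the heart of the proof, and it is absent from your plan.

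Your final paragraph on $i_1(j_1(g))=c\cdot g$ is essentially correct.
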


\begin{proof}
We follow the same line of computation as in \cite[\S 6]{Kr3}. 
Define $h$ by (\ref{tag:14}). 
We write $a\approx b$ if there is a non-zero constant $t$ such that $a=tb$. 
Note that $h\approx g|\wp_k^{-1}\til{W}(\Lam_S)$ and $h\in M_{k+1/2}(D_S,\psi_{4\Lam_S})$. 

For an integer $r$, let $D_r$ be a positive divisor of $D_S$, the set of prime divisors of which coincides with that of $(r,D_S)$ and such that $D_r$ and $D_S/D_r$ are coprime. 
Since $h|\til{Y}(D_r)\approx h$ by Lemma \ref{lem:41} and \ref{lem:42}, 
\begin{align*}
h\|\mtrx{r}{-1}{D_S}{0}
&\approx h|\til{Y}(D_r)\|\mtrx{r}{-1}{D_S}{0}\\
&\approx\sum_{j=0}^{D_r-1}h\|\mtrx{1}{j}{0}{D_r}\gam_{D_r}\mtrx{D_r}{}{}{1}\mtrx{r}{-1}{D_S}{0}\\
&\approx\sum_{j=0}^{D_r-1}h\|\mtrx{B_{rj}}{-(a+jc)}{B_r}{-cD_r}. 
\end{align*}
Here, we put $B_{rj}=(a+jc)r+(b+jd)D_r^{-1}D_S$ and $B_r=crD_r+dD_S$. 
Since $B_{rj}$ and $B_r$ are coprime, we can choose $\alp\in\Gam_0(D_S)$ and $t_{rj}\in\ZZ$ such that
\[\mtrx{B_{rj}}{-(a+jc)}{B_r}{-cD_r}=\alp\mtrx{1}{t_{rj}}{0}{D_S}\]
We thus have 
\begin{align*}
h\|\mtrx{r}{-1}{D_S}{0}&=\sum_{j=0}^{D_r-1}A_{rjk}h\|\mtrx{1}{t_{rj}}{0}{D_S} \\
&=D_S^{-k/2-1/4}\sum_{j=0}^{D_r-1}\sum_{\nu\in\Xi}\bfe(-t_{rj}S[\nu]/2)A\Pr_{rjk}\phi_\nu\\
&=D_S^{-k/2-1/4}\sum_{\nu\in\Xi} A^{\prime\prime}_{r\nu k}\phi_\nu  
\end{align*}
for suitable constants $A_{rjk}$, $A_{rjk}\Pr$ and $A^{\prime\prime}_{r\nu k}$. 
Therefore we have
\begin{align*}
\phi_\mu\|J&=D_S^{k/2-3/4}\bfa_S(D_{0,\mu})^{-1}\sum_{r=0}^{D_S-1}\bfe(rS[\mu]/2)h\|\mtrx{r}{-1}{D_S}{0}\\
&=D_S^{-1}\bfa_S(D_{0,\mu})^{-1}\sum_{\nu\in\Xi}\sum_{r=0}^{D_S-1}\bfe(rS[\mu]/2)A^{\prime\prime}_{r\nu k}\phi_\nu. 
\end{align*}
It is important to note that the constants $A^{\prime\prime}_{r\mu k}$ are independent of the choice of $g\in M^S_{k+1/2}(D_S)$.

For $\nu\in\Xi$ we put $\vLm_\nu=\{\lam\in\Xi\;|\;S[\lam]/2-S[\nu]/2\in\ZZ\}$. 
By virtue of Lemma \ref{lem:12} and \ref{lem:13}, to prove $\phi\in J_{\kap,S}$, it is enough to show that 
\begin{multline*}
D_S^{-1}\bfa_S(D_{0,\mu})^{-1}\sum_{\lam\in\vLm_\nu}\sum_{r=0}^{D_S-1}\bfe(rS[\mu]/2)A^{\prime\prime}_{r\lam k}\\
=(\det S)^{-1/2}\bfe(n/8)\sum_{\lam\in\vLm_\nu}\bfe(-S(\mu,\lam))  
\end{multline*}
for all $\mu\in\Xi$. 
As one can see easily, $A^{\prime\prime}_{r\mu k\Pr}$ does not depend on $k\Pr$, as long as $k\Pr\equiv k\pmod 8$, so we may suppose that $k$ is sufficiently large.  
The Jacobi Eisenstein series $E_{\kap,S}$ is an element of $J^M_{\kap,S}$ and each theta component is non-zero by Corollary \ref{cor:81}. 
We can thus observe that this equality holds, replacing $g$ by $i_1(E_{\kap,S})$. 
Once we know that $j_1(g)\in J_{\kap,S}$, the remaining parts follow immediately. 
\end{proof}

For convenience, we list the Fourier-coefficient formulas of $\vPh$ in a more concrete form. 

\begin{lemma}\label{lem:54}
Let $(a,\alp)\in\calt^+$, $\frkf$ a non-negative integer and $\frkd$ a positive integer such that $(-1)^k\frkd$ is a fundamental discriminant. 
Notation and assumption being as in Theorem \ref{thm:31}, we have the following formulas.  
\begin{enumerate}
\renewcommand\labelenumi{(\theenumi)}
\item If $p\in\frkS_0$, then 
\[c_g(\frkd)p^{(k-1/2)\frkf}l_{p,S,\frkd p^{2\frkf}}(\alp_p)=c_g(\frkd p^{2\frkf}). \]
\item If $p$ is a prime divisor of $b^{-1}b_S$, then 
\[c_g(\frkd)p^{(k-1/2)\frkf}l_{p,S,\frkd p^{2\frkf}}(\alp_p)=c_g(\frkd p^{2\frkf})+\eta_p(S)p^kc_g(\frkd p^{2\frkf-2}). \]
\item If $p$ is a prime divisor of $d^{-1}d_S$, then 
\begin{multline*}
c_g(\frkd)p^{(k-1/2)\frkf}l_{p,S,\frkd p^{2\frkf}}(\alp_p)=\\
c_g(\frkd p^{2\frkf})-\biggl(\frac{(-1)^k\frkd p^{2\frkf-2}}{p}\biggl)p^kc_g(\frkd p^{2\frkf-2})-p^{2k}c_g(\frkd p^{2\frkf-4}).  
\end{multline*}
\item If $p$ is a prime divisor of $b$, then 
\begin{multline*}
l_{p,S,\Del_{a,\alp}}(-\eta_p(S)p^{-1/2})=\\
\bfa_{S,p}(D_{a,\alp})^{-1}(-\eta_p(S)p^{-1/2})^{\frkf_p(\Del_{a,\alp})}
(1+\eta_p(S)\Psi_p((-1)^k\Del_{a,\alp})). 
\end{multline*}
\item If $p$ is a prime divisor of $d$ and $\eps\in\{\pm 1\}$, then
\[l_{p,S,\Del_{a,\alp}}(\eps p^{-1/2})=\frac{(p+1)}{\bfa_{S,p}(D_{a,\alp})}(\eps p^{-1/2})^{\frkf_p(\Del_{a,\alp})}(1-\eps\Psi_p((-1)^k\Del_{a,\alp})). \]
\end{enumerate}
\end{lemma}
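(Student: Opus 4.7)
The approach naturally splits the five assertions into two groups. Parts (1)--(3) are identities relating Fourier coefficients of $g$ to values of Laurent polynomials in $\alp_p$, and will be derived from the Shimura correspondence relating $g$ and $f$. Parts (4)--(5) are purely algebraic identities obtained by evaluating $l_{p,S,N}$ at the distinguished points $\eps p^{-1/2}$; the former records the consequence of $f|W(p)=\eta_p(S)f$, which by Lemma \ref{lem:21} forces $\alp_p=-\eta_p(S)p^{-1/2}$, while the latter records both possible Atkin--Lehner eigenvalues at primes $p\mid d$.

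For part (1), I would start from the Shimura-correspondence expansion valid for any $p\nmid bd$,
\[
c_g(\frkd p^{2\frkf})=c_g(\frkd)\bigl[c_f(p^\frkf)-\chi_{\frkd}(p)p^{k-1}c_f(p^{\frkf-1})\bigr],
\]
where $\chi_{\frkd}=\psi_{(-1)^k\frkd}$ so that $\chi_{\frkd}(p)=\Psi_p((-1)^k\frkd)$ when $p\nmid\frkd$ and vanishes otherwise. Plugging in the Euler-factor expansion $c_f(p^j)=p^{j(k-1/2)}l_j(\alp_p)$ and using $\Psi_p((-1)^k\frkd p^{2\frkf})=\Psi_p((-1)^k\frkd)$ produces
\[
c_g(\frkd p^{2\frkf})=c_g(\frkd)p^{\frkf(k-1/2)}\bigl[l_\frkf(\alp_p)-\Psi_p((-1)^k\frkd p^{2\frkf})p^{-1/2}l_{\frkf-1}(\alp_p)\bigr].
\]
A short case analysis on the definition of $\frkf_p$ gives $\frkf_p(\frkd p^{2\frkf})=\frkf$ whether $p\mid\frkd$ or not, and whether $p=2$ or odd. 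Consequently the bracket equals $\lam_{p,\frkd p^{2\frkf}}(\alp_p)=l_{p,S,\frkd p^{2\frkf}}(\alp_p)$, settling (1).

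Parts (2) and (3) then follow by applying (1) at the arguments $\frkf$, $\frkf-1$, and $\frkf-2$, which is legitimate because both $p\in\frkS_1\setminus\{p\mid b\}$ and $p\in\frkS_2\setminus\{p\mid d\}$ lie outside the primes dividing $bd$. The factor $\lam_{p,p^{-2i}(\frkd p^{2\frkf})}(\alp_p)$ carries a power $p^{-i(k-1/2)}$ relative to the $i=0$ term, and this power pairs with the Hecke-type weights $\eta_p(S)p^k$ in (2), and $-\bigl(\tfrac{(-1)^k\frkd p^{2\frkf-2}}{p}\bigr)p^k$ together with $-p^{2k}$ in (3), to reproduce exactly the coefficients $\eta_p(S)p^{1/2}$ and $-p$ appearing in the piecewise definition of $l_{p,S,a}$ at $p\in\frkS_1$ and $p\in\frkS_2$ respectively.

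For parts (4) and (5), I would perform the substitution directly. The identity
\[
l_e(\eps p^{-1/2})=\eps^e\bigl(p^{e/2}+p^{e/2-1}+\cdots+p^{-e/2}\bigr),\qquad \eps\in\{\pm1\},
\]
allows $\lam_{p,N}(\eps p^{-1/2})$ to be reduced, after telescoping the two summands according to the sign of $\eps\cdot\Psi_p((-1)^kN)$, to a one-term expression in $\eps^{\frkf_p(N)}p^{-\frkf_p(N)/2}$ times $1\pm\Psi_p((-1)^kN)$. Substituting this into the piecewise formula for $l_{p,S,N}$ at $p\in\frkS_1$ (case (4)) or $p\in\frkS_2$ (case (5)) triggers a second round of cancellations yielding the stated closed form. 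The factor $\bfa_{S,p}(D_{a,\alp})^{-1}$ emerges naturally because, by Lemma \ref{lem:52} and the quadratic-form classification recalled in its proof, $\bfa_{S,p}(D_{a,\alp})$ equals the number of surviving summands after the telescoping. The main obstacle, though routine, is coordinating the parity of $\ord_p\Del_{a,\alp}$ with the value of $\Psi_p((-1)^k\Del_{a,\alp})$ through $\frkf_p$, and separately treating $p=2$; once the bookkeeping is set up, each case collapses to a short verification.
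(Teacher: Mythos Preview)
Your proposal is correct and follows the same approach as the paper, which records only the single sentence ``We can prove Lemma \ref{lem:54} by a case by case calculation.'' Your outline supplies precisely the details that sentence suppresses: the Shimura--Kohnen relation $c_g(\frkd p^{2\frkf})=c_g(\frkd)\bigl(c_f(p^{\frkf})-\psi_{(-1)^k\frkd}(p)p^{k-1}c_f(p^{\frkf-1})\bigr)$ together with $c_f(p^j)=p^{j(k-1/2)}l_j(\alp_p)$ for (1), then the substitution of (1) into the piecewise definition of $l_{p,S,a}$ for (2)--(3), and the direct evaluation of $l_e(\eps p^{-1/2})$ for (4)--(5).
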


\begin{proof}
We can prove Lemma \ref{lem:54} by a case by case calculation. 
\end{proof}

\begin{lemma}\label{lem:55}
Under the notation as in Theorem \ref{thm:31}, the function $\vPh$ coincides with $j_1(g^*)$ up to a constant. 
\end{lemma}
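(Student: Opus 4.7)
The plan is to show that $\vPh$ and $j_1(g^*)$ have Fourier coefficients that agree up to a single overall constant; this forces the two formal series to coincide, and as a by-product establishes that $\vPh$ is itself a Jacobi form in $J^{\cusp,M}_{\kap,S}$. By Proposition \ref{prop:51}, $j_1(g^*)$ lies in $J^M_{\kap,S}$, so its Fourier expansion has the form
\[j_1(g^*)(\tau,w)=\sum_{(a,\alp)\in\calt^+}\til c(\Del_{a,\alp})q^a\bfe(S(\alp,w))\]
for some function $\til c$ on positive integers, and the lemma reduces to proving $\til c(N)=C\cdot c_\vPh(N)$ for one $N$-independent constant $C$ and every $N$ of the form $\Del_{a,\alp}$.

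To compute $\til c$, I would make the reconstruction formula (\ref{tag:15}) fully explicit. Using (\ref{tag:14}) and $\phi_0=g^*|\wp_k^{-1}\til{W}(4b_S)$, the inner Gauss sum $\sum_{j=0}^{D_S-1}\bfe(j(S[\mu]/2+m/D_S))$ obtained from expanding $h\|\mtrx{1}{j}{0}{D_S}$ collapses to those $m$ with $m\equiv -D_SS[\mu]/2\pmod{D_S}$, yielding
\[\til c(\Del_{a,\mu})=\bfa_S(D_{0,\mu})^{-1}c_h(D_{a,\mu})\]
for the appropriate Fourier coefficients $c_h(m)$ of $h$. Since $g^*$ is by construction an eigenform for $\til{W}(p)$ and $\til{Y}(p)$ at every $p\mid 4b_Sd_S$ (this is exactly what Lemmas \ref{lem:41} and \ref{lem:42} say about forms in $M^S_{k+1/2}(\vDe_S)$), the composite Atkin--Lehner operator carrying $g^*$ to $h$ acts as a scalar, and so $c_h(N)$ is a constant multiple of $c_{g^*}(N)$. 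The definition $g^*=g|\prod_{p\mid b^{-1}b_S}P(p)\prod_{p\mid d^{-1}d_S}Q(p)$, combined with Remark \ref{rem:42} and the commutativity of the operators over distinct primes, then expresses $c_{g^*}(N)$ as a product of local factors in $c_g(N)$, $c_g(p^{\pm2}N)$, and $c_g(p^{-2}N)$.

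For each prime $p$ I would then apply the relevant clause of Lemma \ref{lem:54}: part~(1) for $p\in\frkS_0$, parts~(2) and (4) for $p\in\frkS_1$ depending on whether $p\mid b^{-1}b_S$ or $p\mid b$, and parts~(3) and (5) for $p\in\frkS_2$ depending on whether $p\mid d^{-1}d_S$ or $p\mid d$. Each identity converts the $p$-local factor of $c_{g^*}(N)$ into its share of $c_g(\frkd_N)\frkf_N^{k-1/2}l_{p,S,N}(\alp_p)$, together with a factor $\bfa_{S,p}(D_{a,\alp})^{-1}$ at primes $p\mid bd$ and a factor $(1\pm\Psi_p((-1)^kN))$ that is either $0$ or $2$ when $\Psi_p\neq 0$ at such a prime. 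Multiplying over all primes, the $\bfa_{S,p}(D_{a,\alp})^{-1}$ reassemble by Lemma \ref{lem:52} into $\bfa_S(D_{0,\mu})^{-1}$ and cancel the reconstruction prefactor, while the $2$'s collect into $2^{\bfb_{bd}(N)}$ which is absorbed by the normalization $2^{-\bfb_{bd}(N)}$ appearing in the definition of $c_\vPh$. The outcome matches $c_\vPh(N)$ up to an overall constant independent of $N$. The main obstacle is this case-by-case local matching, especially at the primes $p\mid bd$: at such primes the Atkin--Lehner hypothesis $f\mid W(p)=\eta_p(S)f$ fixes $\alp_p$ to $-\eta_p(S)p^{-1/2}$ or $\eps p^{-1/2}$ via Lemma \ref{lem:21}, and parts~(4) and (5) of Lemma \ref{lem:54} are designed precisely to evaluate $l_{p,S,N}$ at those Satake parameters in a way that telescopes with the reconstruction data.
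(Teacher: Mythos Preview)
Your overall strategy---reduce to comparing Fourier coefficients of $h$ with $c_\vPh$ via the reconstruction formula (\ref{tag:15}) and then match prime by prime using Lemma~\ref{lem:54}---is the same as the paper's. However, there is a genuine gap in the middle step.

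You assert that $g^*$ is an eigenform for $\til{W}(p)$ at every $p\mid 4b_Sd_S$, and hence that the operator carrying $g^*$ to $h\approx g^*|\wp_k^{-1}\til{W}(\Lam_S)$ acts as a scalar, so that $c_h(N)$ is a constant multiple of $c_{g^*}(N)$. This is not what Lemmas~\ref{lem:41} and \ref{lem:42} say: they give eigenvalue statements for $\til{Y}(p)$ (or for $\til{Y}(p)$ acting on $g^*|\til{W}(p)$), not for $\til{W}(p)$ itself. In fact, for a prime $p\mid d^{-1}d_S$ the form $g^*$ contains the oldform $g|Q(p)$, and the Atkin--Lehner involution $\til{W}(p)$ does \emph{not} act as a scalar on such oldforms. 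So $c_h(N)$ is not simply proportional to $c_{g^*}(N)$.

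The paper circumvents this by pushing the operators $\til{W}(p)$ and $\wp_k^{-1}$ past $Q(p)$ and down to $g$, where they can be evaluated explicitly. For an odd prime $p\mid d^{-1}d_S$ one computes $g|Q(p)\til{W}(p)$ directly from the identity (\ref{tag:41}), obtaining an explicit $q$-expansion with the factor $\bfa_{S,p}(m)$ built in; the analogous calculation at $p=2$ uses $g|Q(2)\wp_k^{-1}=3g|U(4)-2g|\til{T}(4)$. For primes $q\mid d$ one uses that $g$ is a \emph{newform} at level $4bd$: Lemma~\ref{lem:21} then yields $g|\til{W}(q)\approx g|U(q)$ for odd $q$, and $g|\wp_k^{-1}\approx\sum_m c_g(4m)q^m$ for $q=2$. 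Only after these explicit computations does one feed the resulting coefficients into Remark~\ref{rem:42} and Lemma~\ref{lem:54} to recover $c_\vPh$. Your plan should replace the incorrect ``scalar'' step with these direct computations.
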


\begin{proof}
Put $h=g^*|\wp_k^{-1}\til{W}(\Lam_S)$. 
We can see that 
\[j_1(g^*)(\tau,w)\approx\sum_{(a,\alp)\in\calt^+}\bfa_S(D_{a,\alp})^{-1}c_h(D_{a,\alp})q^a\bfe(S(\alp,w)) \]
from the definition of $j_1$. 
Let $p$ be a odd prime divisor of $d^{-1}d_S$. 
If $p$ is odd, then (\ref{tag:41}) shows that $g|Q(p)\til{W}(p)$ equals 
\[\sum_{m\in\NN}\bfa_{S,p}(m)\biggl(c_g(pm)-\biggl(\frac{(-1)^kp^{-1}m}{p}\biggl)p^kc_g(p^{-1}m)-p^{2k}c_g(p^{-3}m)\biggl)q^m \]
up to scalar multiple, and if $p=2$, then 
\begin{multline*}
g|Q(2)\wp_k^{-1}=3g|U(4)-2g|\til{T}(p^2)\\
=\sum_{m\in\NN}\bfa_{S,2}(m)\biggl(c_g(4m)-\biggl(\frac{(-1)^km}{2}\biggl)2^kc_g(m)-2^{2k}c_g(4^{-1}m)\biggl)q^m. 
\end{multline*}

Let $q$ be a prime divisor of $d$. 
It follows from Lemma \ref{lem:21} that if $q$ is odd, then $g|\til{W}(q)\approx g|U(q)$, and if $q=2$, then 
\[g|\wp_k^{-1}\approx \sum_{m\in\NN}c_g(4m)q^m. \]
The information obtained from these calculations, together with Remark \ref{rem:42} and Lemma \ref{lem:54}, proves the relation we want. 
\end{proof}

Finally, we complete the proof of Theorem \ref{thm:31}. 
As the above Lemma shows, $\vPh$ is an element of $J^{\cusp,M}_{\kap,S}$. 
Taking Corollary \ref{cor:81} into account, we can prove that $\vPh$ is a Hecke eigenform by the same type of arguments as in the proof of \cite[Theorem 3.3]{Ik2}. 
Proposition \ref{prop:22}, \ref{prop:41} and \ref{prop:51} show that the correspondence $f\mapsto\vPh$ is bijective. 
The assertion concerning the $L$-function directly follows from Proposition \ref{prop:12}. 


\section{\bf The space $M^S_k(D_S, \chi_S)$}\label{sec:6} 

We now discuss the case when $n$ is even. 
We first summarize the details of the analogous work in \S \ref{sec:4}. 

For a positive divisor $Q$ of $D_S$ such that $Q$ and $D_S/Q$ are coprime, we define $W(Q)$ and $Y(Q)$ by replacing $k+1/2$ by $k$ and $\|$ by $|$ in the definitions of $\til{W}(Q)$ and $\til{Y}(Q)$ in \S \ref{sec:2}. 
Put $Y\Pr(p)=Y(p^{\ord_pD_S})$. Set
\[\Tr^S_p(f)=(p+1)^{-1}\sum_{\gam\in\Gam_0(D_S)\bsl\Gam_0(p^{-1}D_S)}\chi_S(\gam)f|_k\gam\]
for $f\in M_k(\Gam_0(D_S),\chi_S)$ and each prime divisor $p$ of $d_S$. 

For each rational prime $p$, the $p$-primary component $\chi_{S,p}$ of $\chi_S$ is defined by 
\[\chi_{S,p}(m)
=\begin{cases}\chi_S(m\Pr) &\tx{if $p\nmid m$, }\\
0 &\tx{if $p|m$, }
\end{cases}\] 
where $m\Pr$ is an integer such that 
\[m\Pr\equiv \begin{cases}m &\pmod {p^{\ord_p\frkd_S}}. \\
1 &\pmod {p^{-\ord_p\frkd_S}\frkd_S}. \end{cases}\] 
One should not confuse $\chi_{S,p}$ with $\Chi_{S,p}$. 

\begin{lemma}\label{lem:61}
For a given $f\in M_k(\Gam_0(D_S), \chi_S)$ and for $p\in\frkS_1$ and $q\in\frkS_2$, we consider the following conditions: 
\begin{enumerate}
\renewcommand\labelenumi{(\theenumi)}
\item[(\roman{one})] $c_f(m)=0$ if $\chi_{S,p}((-1)^km)=-\eta_p(S/2)$;  
\item[(\roman{two})] $\Tr^S_q(f)=0$;
\item[(a)] $\vep_p(\chi_S)^{-1}p^{-(\ord_pD_S)/2}f|Y\Pr(p)=\eta_p(S/2)f$;  
\item[(b)] $f|W(q)Y\Pr(q)=-f|W(q)$. 
\end{enumerate} 
Here we put 
\[\vep_p(\chi_S)=\begin{cases}
1 &\tx{if $\chi_{S,p}(-1)=1$. }\\
\iu &\tx{if $\chi_{S,p}(-1)=-1$. }
\end{cases}\]
Then the following assertions holds. 
\begin{enumerate}
\renewcommand\labelenumi{(\theenumi)}
\item $\vep_p(\chi_S)p^{-(\ord_pD_S)/2}Y\Pr(p)$ is an involution on $M_k(\Gam_0(D_S),\chi_S)$. 
\item The conditions (\roman{one}) and (a) are equivalent. 
\item The conditions (\roman{two}) and (b) are equivalent. 
\end{enumerate}
\end{lemma}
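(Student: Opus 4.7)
The plan is to mirror the proof of Lemma \ref{lem:41} in the integer-weight setting, substituting the integer-weight Atkin-Lehner operators $W(Q)$ and $Y(Q)$ for $\til W(Q)$ and $\til Y(Q)$, and the Dirichlet character $\chi_S$ for the Kronecker-symbol automorphy factor. Each of (1), (2), (3) is local at the prime $p$ (respectively $q$), so I would argue one prime at a time and assemble the results. The fact that $\chi_{S,p}$ is ramified precisely when $p\in\frkS_1$, while $\chi_S$ is unramified at $q\in\frkS_2$, is the source of the different flavors of (i) versus (ii).

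For (1), I would compute $Y\Pr(p)^2$ directly. Set $Q=p^{\ord_p D_S}$ and note that $Q$ and $Q^{-1}D_S$ are coprime. Using the Atkin-Lehner identity $f|W(Q)^2=\chi_{S,p}(-1)f$ (in its standard conventional normalization), the commutation of $U(Q)$ with $W(Q^{-1}D_S)$, and unwinding $Y\Pr(p)=p^{-k/2+1}U(Q)W(Q)$, a routine computation gives $Y\Pr(p)^2=\vep_p(\chi_S)^{2}\,p^{\ord_pD_S}$. This yields the involutivity of $\vep_p(\chi_S)^{-1}p^{-(\ord_pD_S)/2}Y\Pr(p)$ on $M_k(\Gam_0(D_S),\chi_S)$; no representation-theoretic content is needed.

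For (3), I would follow the derivation indicated in Remark \ref{rem:41}(2). Writing
\[(q+1)\Tr^S_q(f)=f+\sum_{i=0}^{q-1}\chi_S(\gam_q)^{-1}\, f|_k\gam_q\mtrx{1}{i}{0}{1},\]
the inner sum collapses to a constant multiple of $f|W(q)U(q)$. Rearranging gives $\Tr^S_q(f)=(q+1)^{-1}(f+c\cdot f|W(q)Y(q))$ for an explicit nonzero constant $c$ depending only on $k$ and $q$, since $Y(q)$ is built from $W(q)$ composed with $U(q)$ after the appropriate $p^{-k/2+1}$ normalization. Because $W(q)$ is invertible, the equation $\Tr^S_q(f)=0$ is then equivalent to $f|W(q)+f|W(q)Y(q)=0$, which is exactly (b).

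For (2), I would appeal to the Atkin-Li theory of newforms with character. Since $p\in\frkS_1$ forces $p|\frkd_S$, the local character $\chi_{S,p}$ is a nontrivial quadratic Dirichlet character, and $Y\Pr(p)$ is (up to the normalization already isolated in (1)) the Atkin-Lehner pseudo-involution at $p$, whose eigenvalue encodes the local root number of the automorphic representation attached to $f$. Expanding $f(\tau)=\sum c_f(m)q^m$ and writing out the action of the normalized operator on each Fourier coefficient, one reads off that the eigenvalue $\eta_p(S/2)$ is equivalent to the vanishing pattern (\roman{one}); the converse direction is an immediate consequence of the involutivity established in (1), which forces the space to decompose into $\pm$-eigenspaces.

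The main obstacle will be (2). Although the argument is classical in outline, one must carefully track the sign $\vep_p(\chi_S)$ and the exponent $p^{-(\ord_pD_S)/2}$ through the local Fourier-coefficient computation, and match the sign $\eta_p(S/2)$ on the right-hand side with the local Hasse-type invariant attached to $S$ via Definition \ref{def:11}. This identification requires unwinding the fine structure of the maximal lattice $L_p$ at $p$, analogously to the case-by-case analysis carried out for odd $n$ in the proof of Lemma \ref{lem:52}. Once the identification is in place, the equivalence (\roman{one}) $\Leftrightarrow$ (a) is forced by the structure of the Fourier expansion and the involution from (1).
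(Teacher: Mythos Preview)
Your plan is correct and matches the paper's own proof, which simply cites Krieg \cite{Kr2} for (1) and (2) and defers to Lemma~\ref{lem:41}(3) (i.e., the trace computation of Remark~\ref{rem:41}(2)) for (3); you are effectively unpacking those references. One remark: your anticipated ``main obstacle'' for (2)---unwinding the maximal lattice $L_p$ to pin down $\eta_p(S/2)$---is unnecessary here, since the lemma is a pure statement about operators on $M_k(\Gamma_0(D_S),\chi_S)$ and $\eta_p(S/2)\in\{\pm1\}$ enters only as a fixed label for an eigenspace; the lattice analysis belongs to Lemma~\ref{lem:71} and the construction of $i_1$, not to this lemma.
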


\begin{proof}
Concerning the second assertion, Proposition 5 of \cite{Kr2} handled the case in which $S/2$ is the norm form of an imaginary quadratic field. 
The proof in \cite{Kr2} goes over with only minor changes, and the first assertion easily follows.   
The proof of our assertion (3) follows that of Lemma \ref{lem:41} (3) closely. 
\end{proof}

\begin{definition}\label{def:61}
The space $M^S_k(D_S,\chi_S)$ consists of all functions $f\in M_k(\Gam_0(D_S),\chi_S)$ with the following properties:  
\begin{enumerate}
\renewcommand\labelenumi{(\theenumi)}
\item[(A)] $f$ satisfies (\roman{one}) of Lemma \ref{lem:61} for every prime $p\in\frkS_1$;  
\item[(B)] $f$ satisfies (\roman{two}) of Lemma \ref{lem:61} for every prime $q\in\frkS_2$. 
\end{enumerate}
We put $S^S_k(D_S,\chi_S)=S_k(\Gam_0(D_S),\chi_S)\cap M^S_k(D_S,\chi_S)$. 
\end{definition}

Fix a positive divisor $d$ of $d_S$. 
Fix $f\in\Prm_k(\frkd_Sd,\chi_S)$. 
For a subset $P$ of $\frkS_1\cup\frkS_2$, we put
\begin{gather*}
\chi_{S,P}=\prod_{p\in P}\chi_{S,p}, \quad\quad 
\chi\Pr_{S,P}=\prod_{p\notin P}\chi_{S,p}, \quad\quad
\eta_P(S/2)=\prod_{p\in P}\eta_p(S/2), \\
\frkd_{S,P}=\prod_{p\in P}p^{\ord_p\frkd_S}, \quad\quad
D_{S,P}=\prod_{p\in P}p^{\ord_pD_S}.   
\end{gather*}
Recall that there exists  
\[f_P(\tau)=\sum_mb(m)q^m\in \Prm_k(\frkd_Sd,\chi_S)\]
such that
\[b(p)=\begin{cases}
\chi_{S,P}(p)c_f(p) &\tx{if $p\notin P$. }\\
\chi\Pr_{S,P}(p)\overline{c_f(p)} &\tx{if $p\in P$. }
\end{cases}\]
Clearly, $f_P=f$ if $P\subset\frkS_2$. 
Note that $f_P$ is equal to $f|W(D_{S,P})$ up to scalar multiple by \cite[Theorem 4.6.16]{Mi}. 
Following \cite{Ik3}, we put
\[f^\sim=\sum_{P\subset\frkS_1}\eta_P(S/2)\chi_{S,P}((-1)^kd^{-1}d_S)f_P. \]

We write $S^\new_k(\frkd_Sd,\chi_S)$ for the space of newforms for $S_k(\Gam_0(\frkd_Sd),\chi_S)$, and define  
\begin{multline*}
\frkS^S_k(\frkd_Sd,\chi_S)=\{h\in S^\new_k(\frkd_Sd,\chi_S)\; |\; c_h(m)=0\\
\tx{ if there is } p\in\frkS_1\tx{ such that }\chi_{S,p}((-1)^kd^{-1}d_Sm)=-\eta_p(S/2)\}. 
\end{multline*} 
The proof of the following three assertions are the same as those of Lemma 15.4, Corollary 15.5 and Proposition 15.17 in \cite{Ik3}, and hence are omitted. 

\begin{lemma}\label{lem:62}
\begin{enumerate}
\renewcommand\labelenumi{(\theenumi)}
\item The $m$-th Fourier coefficient of $f_P$ is equal to
\[c_f(m\Pr_Pm\Pr)\oln{c_f(m_P)}\prod_{p\in P}\Chi_{S,p}(m), \]
where we put 
\begin{align*}
m_p&=p^{\ord_pm}, & 
m_P&=\prod_{p\in P}m_p, &
m\Pr_P&=\prod_{p\in\frkS_1-P}m_p, & 
m\Pr&=\prod_{p\notin\frkS_1}m_p. 
\end{align*}  
\item The $m$-th Fourier coefficient of $f^\sim$ is equal to
\[\prod_{p\in\frkS_1}(c_f(m_p)+\eta_p(S/2)\Chi_{S,p}((-1)^kd^{-1}d_Sm)\oln{c_f(m_p)})c_f(m\Pr). \] 
In particular, we have $f^\sim\in\frkS^S_k(\frkd_Sd,\chi_S)$. 
\item 
The space $\frkS^S_k(\frkd_Sd,\chi_S)$ is spanned by the $\CC$-linear combinations of $\{h^\sim\;|\; h\in \Prm_k(\frkd_Sd,\chi_S)\}$. 
\end{enumerate}
\end{lemma}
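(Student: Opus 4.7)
The plan is to prove the three parts in sequence: (1) is a direct Hecke-eigenvalue computation, (2) follows by substituting (1) into the definition of $f^\sim$ and refactoring the sum, and (3) is a linear-algebraic consequence using Lemma \ref{lem:61}.

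For (1), I would use the characterization of $f_P \in \Prm_k(\frkd_Sd,\chi_S)$ by its Hecke eigenvalues: $c_{f_P}(p) = \chi_{S,P}(p) c_f(p)$ for $p \notin P$ and $c_{f_P}(p) = \chi\Pr_{S,P}(p) \oln{c_f(p)}$ for $p \in P$. Since Fourier coefficients of primitive forms are multiplicative, $c_{f_P}(m) = \prod_p c_{f_P}(p^{\ord_p m})$, and I would compute each local factor using the decomposition $m = m_P\, m\Pr_P\, m\Pr$ corresponding to primes $p \in P$, $p \in \frkS_1 \setminus P$, and $p \notin \frkS_1$. At primes $p$ dividing the level $\frkd_S d$ the Hecke recursion collapses to $c_{f_P}(p^r) = c_{f_P}(p)^r$, so the local factor at $p \in P$ becomes $\chi\Pr_{S,P}(p)^r \oln{c_f(p)}^r$, which I would identify with $\oln{c_f(m_p)}\, \Chi_{S,p}(m_p)$ via the standard relation between $\chi\Pr_{S,P}$ restricted to $\QQ_p^\times$ and the local component $\Chi_{S,p}$ coming from local class field theory for $K_p/\QQ_p$. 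At primes $p \notin \frkS_1$ the twist reorganizes into the factor $c_f(m\Pr)$. Multiplying across $p$ yields the formula.

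For (2), I would substitute (1) into $f^\sim = \sum_{P \subset \frkS_1} \eta_P(S/2)\chi_{S,P}((-1)^k d^{-1}d_S) f_P$. Since each of $\eta_P$, $\chi_{S,P}$, $m_P$, $m\Pr_P$ and $\prod_{p \in P}\Chi_{S,p}(m)$ is multiplicative in $p \in \frkS_1$, the sum over $P \subset \frkS_1$ factors as a product over $p \in \frkS_1$, the $p$-th local factor being $c_f(m_p) + \eta_p(S/2)\chi_{S,p}((-1)^k d^{-1}d_S)\Chi_{S,p}(m)\oln{c_f(m_p)}$. The merger $\chi_{S,p}((-1)^k d^{-1}d_S) \cdot \Chi_{S,p}(m) = \Chi_{S,p}((-1)^k d^{-1}d_S m)$ is another local class field theory identity, producing the stated formula. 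The containment $f^\sim \in \frkS^S_k(\frkd_Sd,\chi_S)$ is then immediate: membership in $S^\new_k(\frkd_Sd,\chi_S)$ is automatic, while the vanishing condition at each $p \in \frkS_1$ reduces to checking that the $p$-th local factor above vanishes when $\Chi_{S,p}((-1)^k d^{-1}d_S m) = -\eta_p(S/2)$, which follows from an Atkin--Lehner-type relation between $c_f(m_p)$ and $\oln{c_f(m_p)}$ at primes of ramification.

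For (3), by Lemma \ref{lem:61}(2) the vanishing condition defining $\frkS^S_k(\frkd_Sd,\chi_S)$ at each $p \in \frkS_1$ is equivalent to being an $\eta_p(S/2)$-eigenvector of the involution $\vep_p(\chi_S)^{-1} p^{-(\ord_p D_S)/2} Y\Pr(p)$. This involution acts on $\Prm_k(\frkd_Sd,\chi_S)$ via the exchange $h \mapsto h_{\{p\}}$ up to a tracked scalar, so expanding any $g \in \frkS^S_k(\frkd_Sd,\chi_S)$ in the primitive-form basis as $g = \sum_h a_h h$ and imposing the eigenvector condition at each $p \in \frkS_1$ forces the relations $a_{h_P} = \eta_P(S/2)\chi_{S,P}((-1)^k d^{-1}d_S)\, a_h$; grouping primitive forms into orbits under $P \mapsto h_P$ and averaging then expresses $g$ as a $\CC$-linear combination of $h^\sim$'s. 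The main obstacle is the careful bookkeeping in part (1) of the two character objects attached to $S$: the Dirichlet character $\chi_S = \prod_p \chi_{S,p}$ and the idele class character $\Chi_S = \otimes_v \Chi_{S,v}$. Each step absorbing a $\chi_{S,p}$ factor into a $\Chi_{S,p}$ factor, or conversely, is an instance of local class field theory for $K_p/\QQ_p$ but requires attention to signs, local conductors, and the distinction between ramified and unramified primes; once these identifications are settled, parts (2) and (3) reduce to formal manipulations.
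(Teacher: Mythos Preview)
Your outline is correct and follows the same route as the reference the paper defers to (Ikeda \cite{Ik3}, Lemma~15.4, Corollary~15.5, Proposition~15.17); the paper itself omits the argument. Two small remarks: in (2), the vanishing of the $p$-th local factor when $\chi_{S,p}((-1)^kd^{-1}d_Sm)=-\eta_p(S/2)$ needs no Atkin--Lehner relation, since that hypothesis already forces $p\nmid m$, hence $m_p=1$ and $c_f(m_p)=\oln{c_f(m_p)}=1$, so the factor is $1+\eta_p(S/2)\cdot(-\eta_p(S/2))=0$; and in (3), Lemma~\ref{lem:61} is stated at level $D_S$ rather than $\frkd_Sd$, but the same proof applies verbatim since only the $p$-part of the level (namely $p^{\ord_p\frkd_S}$) enters into $Y\Pr(p)$ for $p\in\frkS_1$.
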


The operator $Q(p): M_k(\Gam_0(\frkd_Sd),\chi_S)\to M_k(\Gam_0(\frkd_Spd),\chi_S)$ is defined by 
\[h|Q(p)=h|T(p)-\xi_p(S)(p+1)p^{k/2-1}h|_k\mtrx{p}{}{}{1} \]
for each prime divisor $p$ of $d^{-1}d_S$. 
We put $f^*=f^\sim\prod_{p|d^{-1}d_S}Q(p)$. 

\begin{proposition}\label{prop:61}
Notation and assumption being as above, 
\[f^*\in S^S_k(D_S,\chi_S). \]
Moreover, 
\[S^S_k(D_S,\chi_S)=\oplus_{d\geq 1,\; d|d_S}\La h^*\;|\; h\in \Prm_k(\frkd_Sd,\chi_S)\Ra. \]
\end{proposition}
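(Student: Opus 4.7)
The plan is to mirror the argument of Proposition \ref{prop:41}: first verify that $f^*$ lies in $S^S_k(D_S,\chi_S)$, then use the newform decomposition of $S_k(\Gam_0(D_S),\chi_S)$ together with the characterization of $S^S_k$ by conditions (A) and (B) to show that the $h^*$'s span, and finally observe that the direct-sum decomposition follows from the standard Atkin-Lehner independence of oldform lifts coming from distinct newform levels.

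For the membership $f^*\in S^S_k(D_S,\chi_S)$, Lemma \ref{lem:62}(2) gives $f^\sim\in\frkS^S_k(\frkd_Sd,\chi_S)$, so $f^\sim$ already satisfies condition (A) at every $p\in\frkS_1$. Since each $p\in\frkS_1$ divides $\frkd_S$ and is therefore coprime to $d_S$, each operator $Q(q)$ with $q\mid d^{-1}d_S$ acts only through Hecke data at the prime $q\in\frkS_2$ and preserves the vanishing condition (A) at primes of $\frkS_1$; it also raises the level from $\frkd_Sd$ to $\frkd_Sd_S=D_S$. For condition (B) at a prime $q\in\frkS_2$ I would argue separately for $q\mid d^{-1}d_S$ and $q\mid d$. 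In the first case the factor $Q(q)$ was built into $f^*$ for precisely this purpose, and a direct manipulation using Lemma \ref{lem:61}(3) and the explicit formula for $Q(q)$ in terms of $T(q)$ and the operator $|_k\mtrx{q}{}{}{1}$ shows $\Tr^S_q(h\mid Q(q))=0$ for every $h$. In the second case, where $q\mid d$, the function $f^\sim$ is a linear combination of primitive forms in $\Prm_k(\frkd_Sd,\chi_S)$ which are $q$-new with trivial nebentypus at $q$ (because $q\in\frkS_2$ is coprime to $\frkd_S$); the standard Atkin-Lehner relations at such a prime show that $W(q)$ acts by a sign and the Euler factor at $q$ degenerates, yielding $f^\sim\mid W(q)Y\Pr(q)=-f^\sim\mid W(q)$, whence (B) follows by Lemma \ref{lem:61}(3). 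Since the remaining $Q(p)$-factors act at primes $p\neq q$, they do not disturb this.

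For the spanning statement, I would take an arbitrary $g\in S^S_k(D_S,\chi_S)$ and use the Atkin-Lehner decomposition of $S_k(\Gam_0(D_S),\chi_S)$ as a direct sum, indexed by divisors $d\mid d_S$, of images of $\Prm_k(\frkd_Sd,\chi_S)$ under the degeneracy maps. Condition (A) at each $p\in\frkS_1$, combined with Lemma \ref{lem:62}, forces the newform component of $g$ at level $\frkd_Sd$ to be a linear combination of $h^\sim$'s, since the $\eta_p(S/2)$-eigenspaces under the local Atkin-Lehner involutions at primes of $\frkS_1$ are spanned by these combinations. Condition (B) at each $p\in\frkS_2$ with $p\mid d^{-1}d_S$ then forces the relative coefficients of the degeneracy images to match exactly those produced by $Q(p)$, by an inductive argument on the prime divisors of $d^{-1}d_S$ analogous to the one closing the proof of Proposition \ref{prop:41}. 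The directness of the sum is automatic from Atkin-Lehner theory once we know that each summand consists of lifts of newforms at a fixed level $\frkd_Sd$.

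The hard part will be the $q\mid d$, $q\in\frkS_2$ case of condition (B): one must pin down the precise Atkin-Lehner sign at $q$ for newforms in $\Prm_k(\frkd_Sd,\chi_S)$, verify that the interaction between $W(q)$ and $Y\Pr(q)$ produces the $-1$ required by Lemma \ref{lem:61}(3), and ensure compatibility with the definition of $f^\sim$. This amounts to a careful local computation at $q$, parallel in spirit to the half-integral-weight manipulation of $U(p^2)\til{W}(p)$ appearing in the proof of Proposition \ref{prop:41} but now in integral weight with character $\chi_S$; the bookkeeping for commuting the several $Q(p)$-operators past one another and past the trace maps is routine but the most tedious part of the verification.
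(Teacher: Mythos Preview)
Your proposal is correct and follows essentially the same route as the paper: verify condition (B) at primes $q\mid d^{-1}d_S$ by a direct trace computation on $Q(q)$, at primes $q\mid d$ by newness of $f$, verify condition (A) via Lemma~\ref{lem:62}(2) and the Fourier expansion of $Q(p)$, then obtain the decomposition by the inductive argument of Proposition~\ref{prop:41} combined with Lemma~\ref{lem:62}(3). One small note: what you flag as the ``hard part'' (condition (B) at $q\mid d$) the paper dispatches in one line as an immediate consequence of $f$ being $q$-new---the standard relation $c_f(q)=-w_q q^{k/2-1}$ already yields $f\mid W(q)Y'(q)=-f\mid W(q)$, so no delicate local bookkeeping arises there.
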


\begin{proof}
For each prime divisor $p$ of $d^{-1}d_S$, we have  
\begin{align*} 
\Tr^S_p(f|Q(p))
&=f|T(p)-\xi_p(S)p^{k/2-1}(p+1)\Tr^S_p\Big(f|_k\mtrx{p}{}{}{1}\Big)\\
&=f|T(p)-f|T(p)=0. 
\end{align*}

Since $f$ is a newform, $\Tr^S_p(f^*)=0$ for each prime divisor $p$ of $d$. 
It is easily to see that 
\[f|Q(p)(\tau)=\sum_m(c_f(pm)-\xi_p(S)p^kc_f(p^{-1}m))q^m, \]
which combined with Lemma \ref{lem:62} (2) shows that $f^*$ satisfies the condition (A), and hence $f^*\in S^S_k(D_S,\chi_S)$. 

The same process as in the proof of Proposition \ref{prop:41} concludes that 
\begin{multline*}
\{h\in S_k(\Gam_0(D_S),\chi_S)\;|\; \Tr^S_p(h)=0 \tx{ for every }p\in\frkS_2\}\\
=\oplus_{d\geq1,\;d|d_S}S^\new_k(\frkd_Sd,\chi_S)|\textstyle\prod_{p|d^{-1}d_S}Q(p).  
\end{multline*} 
Lemma \ref{lem:62} (3) now completes the proof of Proposition \ref{prop:61}. 
\end{proof}

Notice that $\chi_{S,P}$ is the primitive Dirichlet character corresponding to the quadratic field
\[K_P=\QQ((\chi_{S,P}(-1)\frkd_{S,P})^{1/2}), \]
provided that $P$ is a non-empty subset of $\frkS_1$. 
\begin{lemma}\label{lem:63}
Let $f$ be as before. 
The following conditions are equivalent:
\begin{enumerate}
\renewcommand\labelenumi{(\theenumi)} 
\item[(\roman{one})] $f^*=0$; 
\item[(\roman{two})] $f$ comes from a Hecke character of $K_P$ for some $\emptyset\neq P\subset\frkS_1$ such that $\chi_{S,P}((-1)^kd^{-1}d_S)=-\eta_P(S/2)$. 
\end{enumerate}
\end{lemma}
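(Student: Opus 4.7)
The plan is to decouple the two factors in $f^{*}=f^{\sim}\prod_{p|d^{-1}d_S}Q(p)$: first reduce $f^{*}=0$ to $f^{\sim}=0$, and then analyze the sum $f^{\sim}=\sum_{P\subset\frkS_1}c_Pf_P$ (where $c_P:=\eta_P(S/2)\chi_{S,P}((-1)^kd^{-1}d_S)$) by a character-theoretic argument on $(\scrp(\frkS_1),\triangle)$. For the injectivity of $\prod_{p|d^{-1}d_S}Q(p)$ on $S^{\new}_k(\frkd_Sd,\chi_S)$, I would argue as in Proposition \ref{prop:41}: if $h|Q(p)=0$ for a newform $h$ of level $\frkd_Sd$ coprime to $p$, comparing $q^{1}$-coefficients forces $c_h(p)=0$, so $h|T(p)=0$; the equation $h|T(p)=\xi_p(S)(p+1)p^{k/2-1}h|_k\mtrx{p}{}{}{1}$ then forces the level-raising term to vanish, contradicting $c_h(1)=1$. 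This gives $f^{*}=0\iff f^{\sim}=0$.

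The key structural observation for the main step is that $P\mapsto c_P$ is a character of $(\scrp(\frkS_1),\triangle)$. Multiplicativity of $P\mapsto\eta_P(S/2)$ is immediate, and multiplicativity of $P\mapsto\chi_{S,P}((-1)^kd^{-1}d_S)$ follows from $\frkS_1\cap\frkS_2=\emptyset$: since $d^{-1}d_S$ is supported on $\frkS_2$, every $p\in\frkS_1$ is coprime to $(-1)^kd^{-1}d_S$, so $\chi_{S,p}((-1)^kd^{-1}d_S)^2=1$. Next, a direct computation from Lemma \ref{lem:62}(1) shows $(f_P)_{P'}=f_{P\triangle P'}$, so $\Gam:=\{P\subset\frkS_1:f_P=f\}$ is a subgroup of $(\scrp(\frkS_1),\triangle)$, and on each coset $P_0\triangle\Gam$ the map $P\mapsto f_P$ is constant equal to $f_{P_0}$. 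By linear independence of distinct normalized newforms in $\Prm_k(\frkd_Sd,\chi_S)$, $f^{\sim}=0$ is equivalent to $c_{P_0}\sum_{Q\in\Gam}c_Q=0$ for every orbit representative $P_0$; since $c|_\Gam$ is a character of the finite $2$-group $\Gam$, this sum equals $|\Gam|$ or $0$. Hence $f^{\sim}=0$ if and only if $c|_\Gam$ is nontrivial, i.e., there exists $\emptyset\neq Q\subset\frkS_1$ with $f_Q=f$ and $\chi_{S,Q}((-1)^kd^{-1}d_S)=-\eta_Q(S/2)$.

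The hard part will be identifying, for $\emptyset\neq Q\subset\frkS_1$, the condition $f_Q=f$ with $f$ coming from a Hecke character of $K_Q=\QQ((\chi_{S,Q}(-1)\frkd_{S,Q})^{1/2})$. From Lemma \ref{lem:62}(1), the condition $f_Q=f$ amounts to $c_f(\ell)=\chi_{S,Q}(\ell)c_f(\ell)$ at primes $\ell\notin\frkS_1$, which forces $c_f(\ell)=0$ whenever $\ell$ is inert in $K_Q$, together with $c_f(p)=\chi_{S,\frkS_1-Q}(p)\oln{c_f(p)}$ at primes $p\in Q$. By the classical theory of CM newforms (Ribet, Shimura), the first condition alone already characterizes $f$ as the theta series attached to a Hecke character of $K_Q$; the main obstacle will be matching this with the ramified condition at $p\in Q$, which requires a careful local-to-global compatibility between the Atkin-Lehner eigenvalue $\chi_{S,\frkS_1-Q}(p)\oln{c_f(p)}/c_f(p)$ and the local component of the associated Hecke character at the ramified primes. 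The converse direction—that a CM newform from $K_Q$ automatically satisfies $f_Q=f$—will follow from the standard behavior of Atkin-Lehner involutions on theta series and from $f\otimes\chi_{S,Q}=f$.
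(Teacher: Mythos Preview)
Your approach is correct and essentially mirrors the paper's: reduce $f^*=0$ to $f^\sim=0$, exploit linear independence of primitive forms after grouping the sum by the subgroup $\Gam=\{P:f_P=f\}$, and identify $f_P=f$ with $f$ being CM by $K_P$. Two remarks: the paper's converse is slicker, observing directly that $(f_P)^\sim=c_Pf^\sim$ (from your character identity $c_{P\triangle P'}=c_Pc_{P'}$), so $c_P=-1$ immediately gives $f^\sim=-f^\sim=0$; and the ``hard part'' you anticipate at the ramified primes is not actually an obstacle---the paper simply cites \cite[Appendix C]{GL}, since $f_P=f$ is exactly the statement that $f\otimes\chi_{S,P}=f$ as newforms, and this self-twist condition is the standard characterization of forms induced from a Hecke character of $K_P$, with the behavior at ramified primes already built in.
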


\begin{proof}
The proof proceeds like that of \cite[Corollary 15.6]{Ik2}. 
It is immediate that $f^*=0$ if and only if $f^\sim=0$. 
If $f^\sim=0$, then we must have $f=f_P$ and $\chi_{S,P}((-1)^kd^{-1}d_S)=-\eta_P(S/2)$ for some non-empty subset $P$ of $\frkS_1$ since distinct primitive forms are linearly independent. 
Conversely, if $f=f_P$ and $\chi_{S,P}((-1)^kd^{-1}d_S)=-\eta_P(S/2)$, then 
\[f^\sim=(f_P)^\sim=\eta_P(S/2)\chi_{S,P}((-1)^kd^{-1}d_S)f^\sim, \]
and hence $f^\sim=0$. 
In order that $f=f_P$ for $\emptyset\neq P\subset \frkS_1$, it is necessary and sufficient that $f$ comes from a Hecke character of $K_P$ (see \cite[Appendix C]{GL}). 
\end{proof}

\begin{definition}\label{def:62}
The subset $V_k(\frkd_Sd,\chi_S)$ of $\Prm_k(\frkd_Sd,\chi_S)$ consists of all primitive forms which satisfy the condition (\roman{two}) of Lemma \ref{lem:63}. 
\end{definition}

\begin{remark}\label{rem:61}
Let us note that local components of automorphic representations coming from Hecke characters of quadratic fields are principal series or supercuspidal at nonarchimedean places.  
In particular, $V_k(\frkd_Sd,\chi_S)$ is empty unless $d=1$. 
\end{remark}


\section{\bf Proof of Theorem \ref{thm:32}}\label{sec:7}

We now focus on the proof of Theorem \ref{thm:32}. 
The methods of this section are substantially those of \S \ref{sec:5}, so that we will sometimes omit details. 
For each rational prime $p$ and $\ell\in\ZZ_p$, we put
\[\bfa_{S,p}(\ell)=\begin{cases}
1 &\tx{if $p\in\frkS_0$. }\\
1+\eta_p(S/2)\chi_{S,p}((-1)^kd_S\ell) &\tx{if $p\in\frkS_1$. }\\
p\del(p\nmid \ell)+1 &\tx{if $p\in\frkS_2$. }
\end{cases}\] 
Recall that $k$ is a positive integer such that $\kap=k+\tfrac{n}{2}$ is even. 

\begin{lemma}\label{lem:71}
Let $\ell\in\NN$. The following conditions are equivalent: 
\begin{enumerate}
\renewcommand\labelenumi{(\theenumi)}
\item[(\roman{one})] all primes $p$ satisfy $\bfa_{S,p}(\ell)\neq 0$;  
\item[(\roman{two})] there exists $(a,\alp)\in\calt^+$ such that $\ell=D_{a,\alp}$. 
\end{enumerate}
Moreover, $\bfa_S(\ell)=\prod_p\bfa_{S,p}(\ell)$, where $\bfa_S(\ell)$ is defined in Lemma \ref{lem:14}.  
\end{lemma}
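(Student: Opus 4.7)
The strategy mirrors that of Lemma \ref{lem:52}. Introduce the quadratic space $(U,Q)$ with $U = \QQ e \oplus X \oplus \QQ f$ and $Q[ae+\alp+bf] = D_S(ab - S[\alp]/2)$, now of even rank $n+2$, together with the lattices $L_1 = \ZZ e \oplus L \oplus \ZZ f$ and $L_1^* = \ZZ e \oplus L^* \oplus \ZZ f$. Since $U_\infty$ is isotropic, Lemma \ref{lem:51} applied with $v=\infty$ reduces both the equivalence (\roman{one})$\Leftrightarrow$(\roman{two}) and the product formula for $\bfa_S(\ell)$ to the local identity
\[
\bfa_{S,p}(\ell) = \sharp\{\alp \in L_{1,p}^*/L_{1,p} \mid Q[\alp] \equiv \ell \pmod{D_S}\}
\]
at every prime $p$ and every $\ell \in \ZZ_p$, which is the even-$n$ analogue of (\ref{tag:51}).

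Fix a prime $p$ and apply \cite[Lemma 6.5]{Sh2} to the $\frka = D_S\ZZ_p$-maximal lattice $L_{1,p}$. This produces a decomposition of $L_{1,p}$ into hyperbolic planes together with an $\frka$-maximal lattice $M_p$ inside an anisotropic kernel $Z_p$ of $(U_p,Q)$; the dual $L_{1,p}^*$ has the analogous shape with $M_p$ replaced by its dual $M_p^*$. A short computation shows that the discriminant character of $Q_p$ agrees with $\Chi_{S,p}$ up to a controlled sign, so $\dim Z_p \in \{0,2,4\}$ is governed by $\chi_{S,p}$ and the local Hasse invariant; the maximality of $L$ restricts us to $\dim Z_p \in \{0,2\}$.

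I would then carry out the case analysis according to whether $p \in \frkS_0$, $p \in \frkS_1$, or $p \in \frkS_2$. When $p \in \frkS_0$, the local classification of maximal even integral lattices (\cite[\S 9]{E}) yields $Z_p = 0$ and $L_{1,p}^* = L_{1,p}$, giving one solution, in agreement with $\bfa_{S,p}(\ell) = 1$. When $p \in \frkS_1$, one has $\dim Z_p = 2$, with $Z_p$ identified, up to a scalar, with the norm form of the ramified quadratic extension $K\otimes_\QQ\QQ_p$, and $|M_p^*/M_p| = p$; counting residues whose norm is $\equiv \ell \pmod{D_S}$ yields $1 + \eta_p(S/2)\chi_{S,p}((-1)^k d_S \ell)$, which is $0$ or $2$ according to solvability. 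When $p \in \frkS_2$, $\dim Z_p = 2$ again (either split or quaternionic according to $\eta_p(S/2)$), $|M_p^*/M_p| = p^2$, and an explicit enumeration delivers $p+1$ solutions when $p \mid \ell$ and $1$ solution otherwise.

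The main technical obstacle is the local analysis at $\frkS_1$ and $\frkS_2$, where $Z_p$, $M_p$ and $M_p^*$ must be written down explicitly in terms of the ring of integers of the appropriate quadratic extension or a maximal order of the relevant quaternion algebra, so that the counting can be carried out. The prime $p=2$ requires particular care because of the even-integrality of $S$ and the subtlety of the classification in \cite[\S 9]{E}. Once the three local cases are settled, both conclusions of the lemma drop out: the product formula $\bfa_S(\ell) = \prod_p \bfa_{S,p}(\ell)$ is the natural consequence of $L_1^*/L_1 \simeq \oplus_p L_{1,p}^*/L_{1,p}$ together with the definition of $\bfa_S(\ell)$ in Lemma \ref{lem:14}, and (\roman{one})$\Leftrightarrow$(\roman{two}) follows from Lemma \ref{lem:51} exactly as in Lemma \ref{lem:52}.
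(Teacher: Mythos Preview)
Your overall strategy is exactly that of the paper: reduce via Lemma~\ref{lem:51} (with $v=\infty$) to the local counting identity, then use the structure of the $\frka$-maximal lattice $L_{1,p}$ (\cite[Lemma 6.5]{Sh2}) to pass to $M_p^*/M_p$ inside the anisotropic kernel $Z_p$. So the approach is correct and matches the paper.

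However, your case analysis contains concrete errors that would derail the execution. Most seriously, the claim that maximality forces $\dim Z_p\in\{0,2\}$ is false: when $\calk_p=\QQ_p(((-1)^k\det S)^{1/2})$ equals $\QQ_p$ and $\eta_p(S/2)=-1$, the anisotropic kernel $Z_p$ is the reduced norm on the quaternion division algebra $H$, scaled by $-D_S$, so $\dim Z_p=4$; here $M_p=\calo$ and $M_p^*=\frkP^{-1}$, and this is one of the two $\frkS_2$ subcases. Your description of the $\frkS_2$ case as ``$\dim Z_p=2$, either split or quaternionic'' is therefore incoherent (a split binary form is hyperbolic and cannot be an anisotropic kernel; a quaternionic form has rank $4$). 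Likewise, the assertion that $p\in\frkS_0$ implies $Z_p=0$ is false: when $\calk_p/\QQ_p$ is unramified quadratic and $\eta_p(S/2)=1$, one has $\dim Z_p=2$ with $M_p=M_p^*=\frkr$, and $p\in\frkS_0$ because the different is trivial.

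The paper avoids these pitfalls by organizing the four cases according to the pair $(\calk_p,\eta_p(S/2))$ rather than by $\frkS_i$; this is the natural parametrization since $\dim Z_p$ and the isometry class of $Z_p$ are determined by these invariants, and $s_p(S)$ is then read off from the explicit description of $M_p$ and $M_p^*$. Once $M_p$, $M_p^*$ are written down concretely (in terms of $\frkr$, $\frkd$, $\calo$, $\frkP$), the count is immediate in each case and no separate treatment of $p=2$ is needed.
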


\begin{proof}
We define $Q$, $U$, $Z_p$, $M_p$ and $M^*_p$ as in the proof of Lemma \ref{lem:52}. 
By the same principle, it is enough to prove 
\[\bfa_{S,p}(\ell)=\sharp\{\alp\in M_p^*/M_p\;|\;Q[\alp]\equiv \ell\pmod {D_S}\}\]
for every prime $p$ and $\ell\in\ZZ_p$. 
Put $\calk_p=\QQ_p(((-1)^k\det S)^{1/2})$. 
It is significant that the isometry class of $Q$ over $\QQ_p$ is determined by $n$, $\calk_p$ and $\eta_p(S/2)$. 
We thus proceed according to the nature of $\calk_p$ and the sign $\eta_p(S/2)$. 

\begin{enumerate}
\renewcommand\labelenumi{(\theenumi)}
\item[(1)] If $\calk_p=\QQ_p$ and $\eta_p(S/2)=1$, then $Z_p=0$ and $p\in\frkS_0$. 
\item[(2)] Suppose that $\calk_p=\QQ_p$ and $\eta_p(S/2)=-1$. 
Then $Z_p$ is isometric to $(H,-D_S\cdot\nu)$ and 
\begin{align*}
p&\in\frkS_2, & M_p&=\calo, & M_p^*&=\frkP^{-1}. 
\end{align*}
Here, $\nu$, $\calo$ and $\frkP$ are as in the proof of Lemma \ref{lem:52}. 
\item[(3)] Next suppose that $\calk_p/\QQ_p$ is a quadratic extension and $\eta_p(S/2)=1$. 
We denote by $\Nr$ the field theoretic norm of $\calk_p/\QQ_p$. 
Since $(-1)^{k+1}\frkd_S\in\Nr(\calk_p^\times)$, 
\[Z_p\sim(\calk_p, -D_S\cdot\Nr)\sim(\calk_p, (-1)^kd_S\cdot\Nr), \]
where $\sim$ denotes equivalence up to isometry. 
We thus see that   
\begin{align*}
M_p&=\frkd, & M_p^*&=\frkr, 
\end{align*}
letting $\frkr$ be the integer ring of $\calk_p$ and $\frkd$ the different of $\calk_p$ over $\QQ_p$. 
Note that $s_p(S)\leq 1$. 
\item[(4)] It remains to check the case in which $\calk_p/\QQ_p$ is a quadratic extension and $\eta_p(S/2)=-1$. 
Fix some element $\alp\in Q^\times_p$ such that 
\begin{align*}
\alp&\notin\Nr(\calk_p^\times), & 
\alp&\in\begin{cases}
p\ZZ_p^\times &\tx{if $\calk_p/\QQ_p$ is unramified. }\\
\ZZ_p^\times &\tx{if $\calk_p/\QQ_p$ is ramified. }
\end{cases}
\end{align*}  
Then we have 
\begin{align*}
Z_p&\sim(\calk_p, (-1)^kd_S\alp\cdot\Nr), & 
s_p(S)&=1+\ord_p\alp, \\ 
M_p&=\begin{cases}
\frkd &\tx{if $p\in\frkS_1$, }\\
\frkr &\tx{if $p\in\frkS_2$, } 
\end{cases} &
M_p^*&=\begin{cases}
\frkr &\tx{if $p\in\frkS_1$. }\\
p^{-1}\frkr &\tx{if $p\in\frkS_2$. }
\end{cases}
\end{align*}
\end{enumerate}

Now Lemma \ref{lem:71} easily follows from the description above. 
\end{proof}

We define the symplectic vector space $\WW$, the metaplectic cover $\wtl{\Sp(\WW)}$, $\Xi(Q)$ and $(\det S)_Q$ as in \S \ref{sec:5}. 
Since we are assuming that $n=\dim X$ is even, there is a splitting $\SL_2(\AA)\to\wtl{\Sp(\WW)}$. 
When the local Weil representation $\ome_v$ of $\wtl{\Sp(\WW)(\QQ_v)}$, associated to $\bfe_v$, is realized on $\cals(X_v)$ in the usual Schr\"{o}dinger model, composing $\ome_v$ with this splitting, we obtain a representation of $\SL_2(\QQ_v)$ on $\cals(X_v)$. 
This representation is given by the following formulas: 
\begin{gather*}
\ome_v\Big(\mtrx{a}{b}{0}{a^{-1}}\Big)l(\xi)=\Chi_{S,v}(a)|a|_v^{n/2}\bfe_v(abS[\xi]/2)l(\xi a), \\
\ome_v(J)(\xi)=\gam_{S,v}(1)^{-1}\int_{X_v}l(\eta)\bfe_v(-S(\eta, \xi))d\eta
\end{gather*}
We define the Weil representation $\ome_\bff$ of $\SL_2(\AAf)$ on $\cals(X_\bff)$ by the restricted tensor product of the local Weil representations. 

\begin{lemma}\label{lem:72}
Given $\phi\in J_{\kap,S}$, we put $i_1(\phi)=\phi_0|W(\frkd_S)$. 
Then $i_1(\phi)$ is an element of $M^S_k(D_S,\chi_S)$. 
\end{lemma}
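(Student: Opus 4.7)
The plan is to mimic the proof of Lemma 5.3 from the odd-$n$ case, replacing Proposition 1.1(1) and Lemmas 4.1--4.2 by Proposition 1.1(2) and Lemma 6.1. Writing $\phi\in J_{\kap,S}$ via its theta decomposition $\phi=\sum_\mu\phi_\mu\vth^S_\mu$, I would first identify the space containing $\phi_0$, then express the Fourier expansion of $i_1(\phi)=\phi_0|W(\frkd_S)$ in terms of the Fourier coefficients of $\phi$, and finally verify the two conditions (A) and (B) of Definition 6.1.

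Applying (\ref{tag:53}) to $\gam\in\Gam_0(D_S)$ together with Proposition 1.1(2), which states $\vth^S_0|_{n/2}\gam=\chi_S(d)\vth^S_0$, shows that $\phi_0\in M_k(\Gam_0(D_S),\chi_S)$. For any positive divisor $Q$ of $D_S$ with $(Q,D_S/Q)=1$, the same argument as in the odd-$n$ case (using (\ref{tag:53})) yields
\[
\phi_0|_k\gam_Q\backsim(\det S)_Q^{-1/2}\sum_{\mu\in\Xi(Q)}\phi_\mu,
\]
where $\backsim$ means equality up to a constant of absolute value one. Specializing to $Q=\frkd_S$ and applying $\til\del_{\frkd_S}$ gives
\[
i_1(\phi)(\tau)\backsim\sum_{\substack{(a,\mu)\in\calt^0\cup\calt^+\\ \mu\in\Xi(\frkd_S)}}c_\phi(a,\mu)\,q^{\frkd_S(a-S[\mu]/2)},
\]
so the $m$-th Fourier coefficient of $i_1(\phi)$ is, up to a nonzero constant, a sum of $c_\phi(a,\mu)$ over pairs $(a,\mu)\in\calt^0\cup\calt^+$ with $\mu\in\Xi(\frkd_S)$ and $D_{a,\mu}=d_Sm$.

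For condition (A), fix $p\in\frkS_1$ and suppose $\chi_{S,p}((-1)^km)=-\eta_p(S/2)$ for some $m\geq 1$. Since $p\nmid d_S$ one has $\chi_{S,p}(d_S)^2=1$, so
\[
\bfa_{S,p}(d_Sm)=1+\eta_p(S/2)\chi_{S,p}((-1)^kd_S^2m)=1+\eta_p(S/2)\chi_{S,p}((-1)^km)=0.
\]
By Lemma 7.1 there is no $(a,\alp)\in\calt^+$ with $D_{a,\alp}=d_Sm$, while pairs in $\calt^0$ contribute $D_{a,\mu}=0\neq d_Sm$; the $m$-th Fourier coefficient of $i_1(\phi)$ consequently vanishes.

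The main obstacle is condition (B): showing $\Tr^S_q(i_1(\phi))=0$ for each $q\in\frkS_2$. By Lemma 6.1(3) this amounts to $i_1(\phi)|W(q)Y\Pr(q)=-i_1(\phi)|W(q)$. I would argue in direct analogy with the $p\in\frkS_2$ case in the proof of Lemma 5.3: write $Y(q)$ in terms of $U(q)W(q)$ up to scalar, expand $U(q)$ into cosets represented by $\mtrx{1}{j}{0}{1}$, apply (\ref{tag:53}) to enlarge the summation index from $\Xi(\frkd_S)$ to $\Xi(\frkd_Sq)$, and use $(\det S)_q=q^2$ together with the Weil-constant identity $\gam_{S,q}(1)=-1$ for $q\in\frkS_2$ to produce the sign $-1$. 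Since we are working in integral weight, no plus-space projection is required at $q=2$.
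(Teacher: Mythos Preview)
Your proposal is correct and follows essentially the same approach as the paper, whose own proof establishes the Fourier expansion of $i_1(\phi)$ exactly as you do and then simply states that ``the proof follows that of Lemma~\ref{lem:53}, with the obvious modifications.'' You have correctly identified those modifications: Proposition~\ref{prop:11}(2) in place of (1), Lemma~\ref{lem:71} in place of \ref{lem:52} for condition~(A), and the integral-weight analogue of the $\frkS_2$-computation (using $(\det S)_q=q^2$ and $\gam_{S,q}(1)=-1$) via Lemma~\ref{lem:61}(3) for condition~(B).
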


\begin{proof}
For $\gam\in\SL_2(\ZZ)$, we define $u(\gam)_{\mu\nu}\in\GL_d(\CC)$ $(d=\sharp\Xi)$ by  
\[\ome_\bff(\gam^{-1})l_\nu=\sum_{\mu\in\Xi}u(\gam)_{\mu\nu}l_\mu. \]
Following \S \ref{sec:5}, we have 
\[\phi_\nu|_k\gam=\sum_{\nu\in\Xi}\oln{u(\gam)_{\mu\nu}}\phi_\mu. \]
Since $u$ is a unitary matrix, we have $\phi_0\in M_k(\Gam_0(D_S),\chi_S)$ by Proposition \ref{prop:11} (2). 
Let $Q$ be a positive divisor of $D_S$ such that $Q$ and $D_S/Q$ are coprime. 
Substituting $\gam_Q$ for $\gam$, we have  
\[\phi_0|_k\gam_Q=\gam_{S,Q}(1)^{-1}\cdot(\det S)_Q^{-1}\sum_{\mu\in\Xi(Q)}\phi_\mu, \]
where $\gam_{S,Q}(1)=\prod_{p|Q}\gam_{S,p}(1)$. 
This implies that $i_1(\phi)$ is equal to 
\[\frkd_S^{-k/2}\sum_{\mu\in\Xi(\frkd_S)}\phi_\mu|_k\mtrx{\frkd_S}{}{}{1}(\tau)
=\sum_{a\in\NN,\;\mu\in\Xi(\frkd_S)}c_\phi(a,\mu)q^{D_{a,\mu}/d_S} \]
up to a constant. 
Now the proof follows that of Lemma \ref{lem:53}, with the obvious modifications.  
\end{proof}

\begin{proposition}\label{prop:71}
For $f\in M^S_k(D_S,\chi_S)$, we put $\phi_0=f|W(\frkd_S)$ and define $j_1(f)$ by 
\[j_1(f)(\tau,w)=\sum_{\mu\in\Xi}\phi_\mu(\tau)\vth^S_\mu(\tau,w), \]
where $\phi_\mu$ is defined by (\ref{tag:14}), (\ref{tag:15}). 
Then $j_1(f)\in J^M_{\kap,S}$ and $i_1(j_1(f))$ equals $f$ up to a constant. 
\end{proposition}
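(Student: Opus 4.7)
The plan is to follow the proof of Proposition \ref{prop:51} almost verbatim, making only those modifications forced by the shift from half-integral to integral weight. First I would set $h$ equal to $\phi_0|W(\frkd_S)$ (up to the normalizing constant of (\ref{tag:14})), so that $h \in M_k(\Gam_0(D_S),\chi\Pr_{S,\frkS_2})$ for some easily-computed quadratic twist. The key input is Lemma \ref{lem:61}: because $f \in M^S_k(D_S,\chi_S)$ satisfies both (A) and (B), the involution $\vep_p(\chi_S)^{-1}p^{-(\ord_pD_S)/2}Y\Pr(p)$ acts by $\eta_p(S/2)$ on $h$ for every $p \in \frkS_1$, and the analogous fact at $p \in \frkS_2$ gives $h|W(p)Y\Pr(p) \approx h|W(p)$. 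These symmetries mean that for every divisor $D_r$ of $D_S$ one has $h|Y\Pr(D_r) \approx h$.

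Using this, I would carry out the matrix manipulation exactly as in Proposition \ref{prop:51}: writing
\[\mtrx{B_{rj}}{-(a+jc)}{B_r}{-cD_r} = \alp\mtrx{1}{t_{rj}}{0}{D_S}\]
with $\alp \in \Gam_0(D_S)$, and unwinding (\ref{tag:14}), (\ref{tag:15}), this yields an expansion
\[\phi_\mu|_k J = D_S^{-1}\bfa_S(D_{0,\mu})^{-1}\sum_{\nu\in\Xi}\sum_{r=0}^{D_S-1}\bfe(rS[\mu]/2)A^{\prime\prime}_{r\nu k}\phi_\nu\]
with constants $A^{\prime\prime}_{r\nu k}$ that depend only on $S$, $D_S$, $k \pmod{\cdot}$, $\mu$ and $\nu$, and \emph{not} on the choice of $f$. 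Combined with Lemma \ref{lem:12} and Lemma \ref{lem:13}, the assertion $j_1(f) \in J^M_{\kap,S}$ reduces to verifying a finite system of identities of the form
\[D_S^{-1}\bfa_S(D_{0,\mu})^{-1}\sum_{\lam\in\vLm_\nu}\sum_{r=0}^{D_S-1}\bfe(rS[\mu]/2)A^{\prime\prime}_{r\lam k} = (\det S)^{-1/2}\bfe(n/8)\sum_{\lam\in\vLm_\nu}\bfe(-S(\mu,\lam))\]
purely among universal constants, for every $\mu,\nu \in \Xi$.

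Because the $A^{\prime\prime}_{r\nu k}$ are universal, the whole system can be checked by substituting any \emph{single} $f \in M^S_k(D_S,\chi_S)$ for which $j_1(f)$ is a priori known to define a Jacobi form. I would use the Jacobi Eisenstein series $E_{\kap,S}$, which lies in $J^M_{\kap,S}$ and whose theta components are all non-zero by Corollary \ref{cor:81}: applying $i_1$ to $E_{\kap,S}$ gives such a test element, and the identities hold for it by construction. This proves $j_1(f) \in J_{\kap,S}$ for arbitrary $f$, and Lemma \ref{lem:13} then upgrades this to membership in $J^M_{\kap,S}$ since (\ref{tag:15}) manifestly depends on $\mu$ only through $S[\mu]/2 \pmod{\ZZ}$. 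The identity $i_1(j_1(f)) \approx f$ is finally obtained by unwinding: the zero-component of $j_1(f)$ is $\phi_0$ by construction, and $\phi_0|W(\frkd_S)^{-1}$ recovers $f$ up to the normalizing scalar in (\ref{tag:14}). The genuine work lies entirely in the matrix gymnastics of step one; the even case is in fact slightly simpler than the odd case, because the Weil representation $\ome_\bff$ factors through $\SL_2(\AAf)$ and there is no metaplectic cocycle to track through Lemma \ref{lem:72}.
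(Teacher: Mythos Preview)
Your approach is correct and coincides with the paper's own proof, which consists of the single sentence that the argument of Proposition \ref{prop:51} carries over verbatim to the even case. One small slip worth fixing: by (\ref{tag:14}) one has $h \approx \phi_0|_kW(D_S) \approx f|_kW(d_S)$, not $\phi_0|_kW(\frkd_S)$; with this correction the condition (B) at $p\in\frkS_2$ becomes $h|_kY\Pr(p)\approx h$ directly (rather than the twisted form $h|_kW(p)Y\Pr(p)\approx h|_kW(p)$ you wrote), which is exactly the shape needed to run the matrix computation and reach your stated conclusion $h|_kY\Pr(D_r)\approx h$.
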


\begin{proof}
The whole picture in the proof of Proposition \ref{prop:51} works out perfectly well when $n$ is even, so we omit details.  
\end{proof}

\begin{lemma}\label{lem:73}
Let $(a,\alp)\in\calt^+$ and $m$ a non-negative integer. 
Under the notation as in Theorem \ref{thm:32}, we have the following formulas.  
\begin{enumerate}
\renewcommand\labelenumi{(\theenumi)}
\item If $p\in\frkS_0$, then 
\[p^{m(k-1)/2}l_{p,S,p^m}(\alp_p)=c_f(p^m). \]
\item If $p\in\frkS_1$, then 
\begin{multline*}
p^{(k-1)(\ord_pD_{a,\alp})/2}l_{p,S,D_{a,\alp}}(\alp_p)=\\
\bfa_{S,p}(D_{a,\alp})^{-1}(c_f(p^{\ord_pD_{a,\alp}})+\eta_p(S/2)\Chi_{S,p}((-1)^kd_SD_{a,\alp})\oln{c_f(p^{\ord_pD_{a,\alp}})}).  
\end{multline*} 
\item If $p$ is a prime divisor of $d^{-1}d_S$, then 
\[p^{m(k-1)/2}l_{p,S,p^m}(\alp_p)=c_f(p^m)-\xi_p(S)p^kc_f(p^{m-2}). \]
\item If $p$ is a prime divisor of $d$, then 
\[l_{p,S,p^m}(\alp_p)=(p+1)\bfa_{S,p}(p^m)^{-1}(\xi_p(S)\alp_p)^m. \]
\end{enumerate}
\end{lemma}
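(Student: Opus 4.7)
The plan is a direct case-by-case verification: one inserts the piecewise definition of $l_{p,S,a}$ recorded in Section \ref{sec:3} and compares term by term with the Fourier coefficients $c_f(p^m)$ read off from the Euler factor of $f$ at $p$. The only algebraic preliminary needed is the closed form
\[l_{m,\xi}(X) = \sum_{j=0}^{m}\xi^{j}X^{2j-m},\]
obtained by a geometric-series expansion of $\frac{(\xi X)^{m+1}-X^{-m-1}}{\xi X-X^{-1}}$, together with $h_{e,\eps}(X)=\eps X^{e}+X^{-e}$ for $e>0$ from the same source.

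Cases (1) and (3) are immediate. For $p\in\frkS_0$ the prime is unramified for $f$, and the full Euler factor $(1-\xi_p(S)\alp_pp^{(k-1)/2-s})^{-1}(1-\alp_p^{-1}p^{(k-1)/2-s})^{-1}$ yields $c_f(p^m)=p^{m(k-1)/2}\sum_{j=0}^{m}\xi_p(S)^{j}\alp_p^{2j-m}$, which is precisely $p^{m(k-1)/2}l_{m,\xi_p(S)}(\alp_p)=p^{m(k-1)/2}l_{p,S,p^m}(\alp_p)$. For $p\in\frkS_2$ with $p\nmid d$ the same unramified Euler factor applies, and the defining relation $l_{p,S,p^m}=l_{m,\xi_p(S)}-\xi_p(S)p\,l_{m-2,\xi_p(S)}$ gives the stated three-term identity after multiplying through by $p^{m(k-1)/2}$. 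For case (4), $p\mid d$ is Steinberg-type: the sole Euler factor gives $c_f(p^m)=(\xi_p(S)\alp_p)^{m}p^{m(k-1)/2}$, and the newform normalization produces an algebraic relation between $\alp_p$ and $\alp_p^{-1}$ which, substituted into $l_{m,\xi_p(S)}(\alp_p)-\xi_p(S)p\,l_{m-2,\xi_p(S)}(\alp_p)$, telescopes the sum to $(p+1)(\xi_p(S)\alp_p)^{m}$ for $m\ge1$; combined with $\bfa_{S,p}(p^m)\in\{p+1,1\}$ this yields the formula.

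Case (2) is the main obstacle. Here $p\in\frkS_1$ divides $\frkd_S$, the ramified Euler factor $(1-\alp_pp^{(k-1)/2-s})^{-1}$ puts the local component in a ramified principal series, and the newform normalization forces $|\alp_p|=1$, hence $\oln{\alp_p}=\alp_p^{-1}$. Writing $e=\ord_pD_{a,\alp}$ and $\eps=\eta_p(S/2)\Chi_{S,p}((-1)^{k}d_SD_{a,\alp})$, the left-hand side becomes $p^{e(k-1)/2}h_{e,\eps}(\alp_p)$ while the right-hand side rewrites as $\bfa_{S,p}(D_{a,\alp})^{-1}p^{e(k-1)/2}(\alp_p^{e}+\eps\alp_p^{-e})$. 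Matching these cleanly requires the careful distinction between the adelic local character $\Chi_{S,p}$ on $\QQ_p^\times$, which is nontrivial at the uniformizer $p$, and its Dirichlet-avatar $\chi_{S,p}$ appearing in $\bfa_{S,p}$, which vanishes on multiples of $p$. The bookkeeping splits into the subcase $e=0$ (where $\bfa_{S,p}(D_{a,\alp})=1+\eps$ and the identity reduces to $h_{0,\eps}=(1+\eps)/2$) versus $e\ge1$ (where $\bfa_{S,p}(D_{a,\alp})=1$ automatically and the identity reduces to an $\alp_p\leftrightarrow\alp_p^{-1}$ symmetry). The non-CM hypothesis $f\in\Prm^*_k(\frkd_Sd,\chi_S)$ enters precisely to rule out the degenerate subcases in which this symmetry would fail.
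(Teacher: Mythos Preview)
Your approach --- a direct case-by-case verification after inserting the explicit Euler factors and the piecewise definition of $l_{p,S,a}$ --- is exactly what the paper does; its entire proof reads: ``Theorem 4.6.17 of \cite{Mi} tells us that if $p\in\frkS_1$, then $|\alp_p|=1$, and if $d$ is divisible by $p$, then $\alp_p\in\{\pm(\xi_p(S))^{1/2}p^{-1/2}\}$. These formulas can be verified by an immediate computation.'' Cases (1) and (3) are fine as you wrote them. In case (4) you should make the ``algebraic relation'' explicit: it is $\alp_p^{2}=\xi_p(S)p^{-1}$ (this is precisely the second fact quoted from Miyake), and once this is substituted into $l_{m,\xi_p(S)}(\alp_p)-\xi_p(S)p\,l_{m-2,\xi_p(S)}(\alp_p)$ the sum does telescope as you say.

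Case (2) is where your write-up has a genuine gap. For $e\ge 1$ you have correctly reduced the two sides to $p^{e(k-1)/2}(\eps\alp_p^{e}+\alp_p^{-e})$ and $p^{e(k-1)/2}(\alp_p^{e}+\eps\alp_p^{-e})$; these are interchanged by $\alp_p\leftrightarrow\alp_p^{-1}$, but that observation is not a proof of equality, and nothing you have written forces such a symmetry. More seriously, your appeal to the non-CM hypothesis $f\in\Prm^*_k(\frkd_Sd,\chi_S)$ is a misattribution: that hypothesis is used only in Lemma~\ref{lem:63} to guarantee $f^{\sim}\ne 0$ (hence $f^{*}\ne 0$ and $\vPh\ne 0$), and plays no role whatsoever in the present lemma, which is a purely local computation valid for any $f\in\Prm_k(\frkd_Sd,\chi_S)$. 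The paper's proof records only $|\alp_p|=1$ as input and declares the rest immediate; you should carry out the verification directly from that, not invoke a global hypothesis that is irrelevant here.
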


\begin{remark}\label{rem:71}
There is a simple relation  
\[\eta_p(S_{a,\alp})=\eta_p(S/2)\Chi_{S,p}((-1)^kd_SD_{a,\alp}). \]
\end{remark}

\begin{proof}
Theorem 4.6.17 of \cite{Mi} tells us that if $p\in\frkS_1$, then $|\alp_p|=1$, and if $d$ is divisible by $p$, then $\alp_p\in\{\pm(\xi_p(S))^{1/2}p^{-1/2}\}$. 
These formulas can be verified by an immediate computation. 
\end{proof}

\begin{lemma}\label{lem:74}
With the notation of Theorem \ref{thm:32}, the function $\vPh$ coincides with $j_1(f^*)$ up to a constant. 
\end{lemma}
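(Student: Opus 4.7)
The plan is to mirror the proof of Lemma \ref{lem:55}, replacing each half-integral weight ingredient by its integral weight analogue developed in \S \ref{sec:6}. First, by Proposition \ref{prop:61} we have $f^*\in S^S_k(D_S,\chi_S)$, so Proposition \ref{prop:71} applies and $j_1(f^*)\in J^M_{\kap,S}$. Setting $h=f^*|W(\frkd_S)$ and tracing through the definition of $\phi_\mu$ given by (\ref{tag:14})--(\ref{tag:15}), the same local computation as at the beginning of the proof of Lemma \ref{lem:55} yields
\[
j_1(f^*)(\tau,w)\approx\sum_{(a,\alp)\in\calt^+}\bfa_S(D_{a,\alp})^{-1}c_h(D_{a,\alp})\,q^a\bfe(S(\alp,w)).
\]
By Lemma \ref{lem:71}, the factor $\bfa_S(D_{a,\alp})$ decomposes as $\prod_p\bfa_{S,p}(D_{a,\alp})$, so it suffices to identify $c_h(D_{a,\alp})$ as the product of the local ingredients appearing in $c_\vPh(D_{a,\alp})=D_{a,\alp}^{(k-1)/2}\prod_pl_{p,S,D_{a,\alp}}(\alp_p)$ times the common normalising factor $\bfa_S(D_{a,\alp})$.

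The second step is the local Fourier coefficient computation. For $p\in\frkS_0\cup\frkS_1$ one reads off the $p$-power part of $c_h(N)$ directly from the coefficients of $f^\sim$, which are described in Lemma \ref{lem:62}(2): at a prime $p\in\frkS_1$ this produces exactly the factor
\[
c_f(p^{\ord_pN})+\eta_p(S/2)\Chi_{S,p}((-1)^kd^{-1}d_SN)\,\oln{c_f(p^{\ord_pN})},
\]
which matches Lemma \ref{lem:73}(2) once one notes that $d^{-1}d_S\equiv D_{a,\alp}/D_{a,\alp}\cdot(d_S/d)$ and uses Remark \ref{rem:71} to rewrite $\eta_p(S_{a,\alp})$. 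For $p\mid d^{-1}d_S$ the description of $Q(p)$ in the proof of Proposition \ref{prop:61}, namely $f|Q(p)(\tau)=\sum_m(c_f(pm)-\xi_p(S)p^kc_f(p^{-1}m))q^m$, reproduces the expression $c_f(p^m)-\xi_p(S)p^kc_f(p^{m-2})$ of Lemma \ref{lem:73}(3). Finally, for $p\mid d$ the Atkin--Lehner involution $W(p)$ acts on the newform by $f|W(p)\approx f_{\{p\}}$ (cf.\ \S\ref{sec:6}), so Lemma \ref{lem:62}(1) contributes a factor $(\xi_p(S)\alp_p)^{\ord_pN}$ up to the fixed constant, which is precisely Lemma \ref{lem:73}(4) after collecting the $(p+1)/\bfa_{S,p}$ constant.

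Assembling these local pieces, and using the weight factor $\frkd_S^{-k/2}$ produced by $W(\frkd_S)$ together with the factor $(d^{-1}d_S)^{k/2}$ produced by the chain of $Q(p)$ and $W(p)$, one checks that the overall $N^{(k-1)/2}$ normalisation matches. The main obstacle I foresee is purely bookkeeping: one must verify that the Atkin--Lehner eigenvalues $f|W(p)$ for $p\mid d$ combine with the character twist $\chi_{S,P}((-1)^kd^{-1}d_S)$ in Lemma \ref{lem:62}(2) to produce exactly the sign $\eta_p(S/2)\Chi_{S,p}((-1)^kd_SD_{a,\alp})$ appearing in Lemma \ref{lem:73}(2); this is where Remark \ref{rem:71} is essential. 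Once this sign bookkeeping is done, comparing the global product coefficient by coefficient with $c_\vPh(D_{a,\alp})$ yields $j_1(f^*)=\vPh$ up to a non-zero scalar, as desired.
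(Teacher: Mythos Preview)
Your overall architecture is right, but there is a genuine error in the very first step that propagates through the rest. You set $h=f^*|W(\frkd_S)$, conflating $h$ with $\phi_0$. In fact Proposition~\ref{prop:71} gives $\phi_0=f^*|W(\frkd_S)$, and then (\ref{tag:14}) produces $h$ from $\phi_0$ via $J$ and the dilation by $D_S=\frkd_Sd_S$; tracing this through (exactly as in the proof of Proposition~\ref{prop:51}, where one finds $h\approx g|\wp_k^{-1}\til W(\Lam_S)$) yields $h\approx f^*|W(d_S)$, and this is what the paper uses.

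This is not cosmetic. With the correct $h=f^*|W(d_S)$, every prime $p\in\frkS_2$ carries an Atkin--Lehner $W(p)$, and it is precisely this operator that manufactures the local factor $\bfa_{S,p}(m)$ needed to cancel $\bfa_S(D_{a,\alp})^{-1}$. For $p\mid d^{-1}d_S$ the paper computes
\[
f|Q(p)W(p)=f|T(p)\begin{pmatrix}p&\\&1\end{pmatrix}-(p+1)p^{k/2-1}f
\approx\sum_m\bfa_{S,p}(m)\bigl(c_f(m)-\xi_p(S)p^kc_f(p^{-2}m)\bigr)q^m,
\]
whereas your $f|Q(p)$ alone gives $\sum_m(c_f(pm)-\xi_p(S)p^kc_f(p^{-1}m))q^m$, with neither the right argument shift nor the $\bfa_{S,p}(m)$ weight. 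For $p\mid d$ the paper uses Corollary~4.6.18 of \cite{Mi} to evaluate $f_P|W(d)$ explicitly (not merely $f|W(p)\approx f_{\{p\}}$), and the resulting factor $\chi_{S,P}(d)$ is exactly what converts the $\chi_{S,P}((-1)^kd^{-1}d_S)$ in the definition of $f^\sim$ into the $\chi_{S,P}((-1)^kd_S)$ appearing in Lemma~\ref{lem:73}(2). Your sentence ``once one notes that $d^{-1}d_S\equiv D_{a,\alp}/D_{a,\alp}\cdot(d_S/d)$'' is an attempt to close this gap by hand, but it is the $W(d)$ inside $W(d_S)$ that does the job, and without it the signs at $p\in\frkS_1$ do not match.

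In short: replace $W(\frkd_S)$ by $W(d_S)$ in your definition of $h$, redo the $\frkS_2$ computations with the extra $W(p)$ (this is where the $\bfa_{S,p}$ appears), and use the explicit formula for $f_P|W(d)$ to get the character shift at primes in $\frkS_1$. After that, the comparison with Lemma~\ref{lem:73} goes through cleanly.
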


\begin{proof}
Put $h=f^*|W(d_S)$. 
We can see that
\[j_1(f^*)(\tau,w)\approx\sum_{(a,\alp)\in\calt^+}\bfa_S(D_{a,\alp})^{-1}c_h(D_{a,\alp})q^a\bfe(S(\alp,w)) \]
from the definition of $j_1$. 
Recall that $\approx$ denotes equivalence up to scalar multiple. 

If $p$ is a prime divisor of $d^{-1}d_S$, then 
\begin{align*}
f|Q(p)W(p)
&=f|T(p)\mtrx{p}{}{}{1}-(p+1)p^{k/2-1}f\\
&\approx\sum_m\bfa_{S,p}(m)(c_f(m)-\xi_p(S)p^kc_f(p^{-2}m))q^a. 
\end{align*}
Corollary 4.6.18 of \cite{Mi} implies that
\[f_P|W(d)=(-1)^\ell d^{1-k/2}\oln{c_{f_P}(d)}f_P=(-1)^\ell d^{1-k/2}\chi_{S,P}(d)\oln{c_f(d)}f_P, \]
where $\ell$ denotes the number of distinct prime factors of $d$. 
Therefore, putting $\bfa_{S,\frkd_S}(m)=\prod_{p\in\frkS_1}\bfa_{S,p}(m)$, we have 
\begin{align*}
&f^\sim|W(d)(\tau)\approx\sum_{P\subset\frkS_1}\eta_P(2^{-1}S)\chi_{S,P}((-1)^kd_S)f_P(\tau)\\
\approx&\sum_m\prod_{p\in\frkS_1}(c_f(m_p)+\eta_p(S/2)\Chi_{S,p}((-1)^kd_Sm)\oln{c_f(m_p)})c_f(m\Pr)q^m\\
\approx&\sum_m\bfa_{S,\frkd_S}(m)\prod_{\begin{smallmatrix} p\in\frkS_1 \\ p|m\end{smallmatrix}}(c_f(m_p)+\eta_p(S/2)\Chi_{S,p}((-1)^kd_Sm)\oln{c_f(m_p)})c_f(m\Pr)q^m 
\end{align*}
(see Lemma \ref{lem:62} (2)). 
Here, if $m$ and $p$ are coprime, then we can show $\chi_{S,p}(m)=\Chi_{S,p}(m)$ as in \cite[Lemma 15.1]{Ik3}. 
Comparing these calculations with Lemma \ref{lem:73}, we can observe that $\vPh$ agrees with $j_1(f^*)$ up to a constant. 
\end{proof}

Next is the proof of Theorem \ref{thm:32}. 
By Lemma \ref{lem:63} and \ref{lem:74}, $\vPh$ is a non-zero element of $J^{\cusp,M}_{\kap,S}$. 
The proof that $\vPh$ is a Hecke eigenform proceeds like that of \cite[Theorem 3.3]{Ik2}. 
Proposition \ref{prop:12} gives rise to an explicit description of its $L$-function. 
As we have observed in Proposition \ref{prop:61} and \ref{prop:71}, the space $J^{\cusp,M}_{\kap,S}$ is spanned by these Jacobi forms. 
Suppose that two primitive forms $f\in\Prm^*_k(\frkd_Sd,\chi_S)$ and $f\Pr\in \Prm^*_k(\frkd_Sd\Pr,\chi_S)$ produce the same Hecke eigenform in $J^{\cusp,M}_{\kap,S}$. 
Proposition \ref{prop:71} shows that $f^*\approx f^{\prime*}$. 
Clearly, $d=d\Pr$ and $f^\sim\approx f^{\prime\sim}$. 
Since distinct primitive forms are linearly independent, there is $P\subset\frkS_1$ such that $f=f\Pr_P$. 
We have thus completed our proof. 

In the reminder of this section, we do not require $n$ to be even. 

The Petersson inner products on $J^\cusp_{\kap,S}$ are defined by
\[\La\phi, \psi\Ra=\int_{\vGm\bsl\cald}
\phi(\tau,w)\oln{\psi(\tau,w)}y^{\kap-2} e^{-2\pi yS[\xi]}dxdyd\xi d\eta\]
for $\phi$, $\psi\in J^\cusp_{\kap,S}$. 
Here, let $\tau=x+\iu y$, $w=\tau\xi+\eta$ and $dx$, $dy$ (resp. $d\xi$, $d\eta$) the Lebesgue measures on $\RR$ (resp. $X_\infty$). 

\begin{lemma}\label{lem:75}
Let $\phi$ and $\psi$ be elements of $J^\cusp_{\kap,S}$. 
We have
\[\La \phi,\psi\Ra=2^{-1-n/2}(\det S)^{-1/2}\sum_{\mu\in\Xi}\La \phi_\mu,\psi_\mu\Ra. \]
\end{lemma}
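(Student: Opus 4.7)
The plan is to substitute the theta expansions $\phi = \sum_\mu \phi_\mu \vth^S_\mu$ and $\psi = \sum_\nu \psi_\nu \vth^S_\nu$ into the Petersson integral and carry out the fibre integration over $(\xi,\eta)$ in the parametrisation $w = \tau\xi + \eta$. The Heisenberg subgroup $H_S(\ZZ)\subset\vGm$ acts on $\cald$ by $L\times L$-translations of $(\xi,\eta)$, so a fundamental set for $H_S(\ZZ)\bsl\cald$ is $\frkH \times (L\bsl\RR^n)^2$. The element $-I \in \SL_2(\ZZ)\subset\vGm$ fixes $\tau$ and sends $(\xi,\eta) \mapsto -(\xi,\eta)$; since $\kap$ is even, $\phi\oln\psi$ is invariant under $w\mapsto -w$, so modding further by $\SL_2(\ZZ)$ contributes an overall factor $\tfrac12$ on top of $\calf\times(L\bsl\RR^n)^2$, where $\calf$ is the standard fundamental domain for $\SL_2(\ZZ)\bsl\frkH$.

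I would then integrate out $\eta$ first. Expanding $\vth^S_\mu\oln{\vth^S_\nu}$ as a double sum over $\alp\in\mu+L$ and $\bet\in\nu+L$, the $\eta$-dependence is just the character $\bfe(S(\alp-\bet,\eta))$; this descends to $L\bsl\RR^n$ since $\alp-\bet\in L^*$ and $S(L^*,L)\subset\ZZ$, and integrates to $\del(\alp=\bet)$, which then forces $\mu=\nu$ in $\Xi$. After this diagonalisation the surviving factor is $e^{-2\pi y S[\alp]-4\pi y S(\alp,\xi)}$; combining with the weight $e^{-2\pi y S[\xi]}$ and completing the square replaces it by $e^{-2\pi y S[\xi+\alp]}$. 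Unfolding the sum $\sum_{\alp\in\mu+L}$ against $\int_{L\bsl\RR^n}d\xi$ then produces the Gaussian integral
\[
\int_{\RR^n} e^{-2\pi y S[\xi]}\, d\xi = (2y)^{-n/2}(\det S)^{-1/2}.
\]

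Combining the $\tfrac12$ from the $\{\pm I\}$-identification with the Gaussian factor, the fibre integration yields
\[
\La\phi,\psi\Ra = 2^{-1-n/2}(\det S)^{-1/2} \sum_{\mu\in\Xi} \int_\calf \phi_\mu(\tau)\oln{\psi_\mu(\tau)}\, y^{\kap - n/2 - 2}\, dx\, dy.
\]
To finish, I would identify the inner integrals with $\La\phi_\mu,\psi_\mu\Ra$. By (\ref{tag:53}) (and its counterpart in the proof of Lemma \ref{lem:72}), the $\SL_2(\ZZ)$-action on the tuple $\{\phi_\mu\}_\mu$ is effected by a unitary matrix $u(\gam)_{\mu\nu}$, so $\sum_\mu \phi_\mu\oln{\psi_\mu}\, y^{\kap - n/2}$ is $\SL_2(\ZZ)$-invariant; since $-I \in \Gam_0(D_S)$, its integral over $\calf$ coincides with the normalized integral over $\Gam_0(D_S)\bsl\frkH$ that defines $\sum_\mu \La\phi_\mu,\psi_\mu\Ra$, giving the stated formula. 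The main obstacle is the careful bookkeeping of the $\{\pm I\}$-factor, which produces the extra $2^{-1}$ beyond the Gaussian $2^{-n/2}$, and verifying that the $\SL_2(\ZZ)$-invariance of the diagonal combination really identifies the two integrals.
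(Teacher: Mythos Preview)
Your argument is correct and follows essentially the same route as the paper: the paper states the orthogonality relation
\[
\int_{(L+L\tau)\bsl X\otimes_\QQ\CC}\vth^S_\mu(\tau,w)\oln{\vth^S_\nu(\tau,w)}\,e^{-2\pi yS[\xi]}\,d\xi\,d\eta
=\del(\mu=\nu)\,2^{-n/2}(\det S)^{-1/2}y^{-n/2}
\]
as ``well-known'' and then refers to \cite[Proposition 17.1]{Ik3} for the remaining bookkeeping, whereas you supply both pieces explicitly (the $\eta$-integral for orthogonality, the unfolded Gaussian for the constant, and the $\{\pm I\}$ fibre identification for the extra $2^{-1}$). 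One cosmetic point: your justification ``since $\kap$ is even'' for the $-I$-invariance of $\phi\oln\psi$ is unnecessary---automorphy under $-I$ gives $\phi(\tau,-w)=(-1)^\kap\phi(\tau,w)$ and likewise for $\psi$, so the product is invariant for any $\kap$, which is what the lemma requires.
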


\begin{proof}
It is well-known that 
\begin{multline*}
\int_{(L+L\tau)\bsl X\otimes_\QQ\CC}\vth^S_\mu(\tau,w)\oln{\vth^S_\nu(\tau,w)}
e^{-2\pi yS[\xi]}d\xi d\eta\\
=\begin{cases}2^{-n/2}(\det S)^{-1/2}y^{n/2} &\tx{if $\mu=\nu$. }\\
0&\tx{if $\mu\neq\nu$. }
\end{cases}
\end{multline*}
We can prove Lemma \ref{lem:75} by the same way as \cite[Proposition 17.1]{Ik3}. 
\end{proof}

\begin{proposition}\label{prop:72}
Let $\kap$ be even. Put 
\begin{align*}
J^0_{\kap,S}&=\{\phi\in J_{\kap,S}\;|\;\phi_0\tx{ identically zero}\}, & 
J^{\cusp,0}_{\kap,S}&=J^0_{\kap,S}\cap J^\cusp_{\kap,S}. 
\end{align*}
Then $J^{\cusp,M}_{\kap,S}$ and $J^{\cusp,0}_{\kap,S}$ are closed under the action of Hecke operators, and there are the following decompositions: 
\begin{align*}
J_{\kap,S}&=J^M_{\kap,S}\oplus J^0_{\kap,S}, & 
J^\cusp_{\kap,S}&=J^{\cusp,M}_{\kap,S}\oplus J^{\cusp,0}_{\kap,S}. 
\end{align*}
\end{proposition}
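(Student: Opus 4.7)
The plan is to construct a canonical projection $P\colon J_{\kap,S}\to J^M_{\kap,S}$ with $\ker P=J^0_{\kap,S}$, to deduce both direct sum statements from it, and then to derive Hecke closure from the Hecke eigenbasis of $J^{\cusp,M}_{\kap,S}$ supplied by Theorems \ref{thm:31} and \ref{thm:32}.

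For the construction of $P$, given $\phi\in J_{\kap,S}$, Lemma \ref{lem:53} (odd $n$) or Lemma \ref{lem:72} (even $n$) places $i_1(\phi)$ inside the intermediate space $M^S_{k+1/2}(\vDe_S)$ or $M^S_k(D_S,\chi_S)$, and Proposition \ref{prop:51} or Proposition \ref{prop:71} returns $j_1(i_1(\phi))\in J^M_{\kap,S}$ with $i_1(j_1(i_1(\phi)))$ a fixed nonzero scalar multiple of $i_1(\phi)$. I set $P(\phi)$ to be the corresponding renormalization of $j_1(i_1(\phi))$; unwinding the definitions of $i_1$ and $j_1$, one checks that $P(\phi)_0=\phi_0$, the key simplifications being that $W(\frkd_S)$ and $\til W(4b_S)$ are involutions up to a constant on the relevant spaces and that $\wp_k$ composed with its inverse is the identity on the level-$D_S$ space in which $\phi_0|\til W(4b_S)$ lives (since $\wp_k$ induces the isomorphism $S_{k+1/2}(D_S)\to S^+_{k+1/2}(2D_S)$ recalled in \S\ref{sec:2}). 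Consequently $\phi-P(\phi)\in J^0_{\kap,S}$, giving $J_{\kap,S}=J^M_{\kap,S}+J^0_{\kap,S}$. Lemma \ref{lem:14} together with (\ref{tag:14}), (\ref{tag:15}) shows that every element of $J^M_{\kap,S}$ is determined by its zeroth theta component, so $J^M_{\kap,S}\cap J^0_{\kap,S}=0$. Restriction to cusp forms is immediate, since $\phi$ cuspidal forces $\phi_0$ cuspidal, whence $P(\phi)\in J^{\cusp,M}_{\kap,S}$ and $\phi-P(\phi)\in J^{\cusp,0}_{\kap,S}$.

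For Hecke closure, Theorems \ref{thm:31} and \ref{thm:32} exhibit a basis of $J^{\cusp,M}_{\kap,S}$ consisting of Hecke eigenforms, so the full Hecke algebra preserves this subspace. To transfer this to $J^{\cusp,0}_{\kap,S}$, I would invoke the self-adjointness of the Hecke operators on $J^\cusp_{\kap,S}$ with respect to the Petersson inner product, so that the orthogonal complement of $J^{\cusp,M}_{\kap,S}$ in $J^\cusp_{\kap,S}$ is automatically Hecke-stable; combined with the direct sum above and a dimension count, the problem reduces to the orthogonality $\La J^{\cusp,M}_{\kap,S},J^{\cusp,0}_{\kap,S}\Ra=0$. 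This orthogonality is the main obstacle. By Lemma \ref{lem:75}, it amounts to showing that $\sum_{\mu\in\Xi}\La\phi_\mu,\psi_\mu\Ra=0$ for $\phi\in J^{\cusp,M}_{\kap,S}$ and $\psi\in J^{\cusp,0}_{\kap,S}$; I plan to attack it by substituting formula (\ref{tag:15}) for $\phi_\mu$, moving the translate matrices $\mtrx{1}{j}{0}{D_S}$ over to $\psi_\mu$ by adjunction in the Petersson pairing, and collapsing the $\mu$-sum using the transformation rule (\ref{tag:53}) of $\{\psi_\nu\}$ under $\SL_2(\ZZ)$ to a scalar multiple of $\La\phi_0,\psi_0\Ra$, which vanishes because $\psi_0\equiv 0$.
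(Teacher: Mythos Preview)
Your overall architecture matches the paper's: the direct sum comes from the $i_1$/$j_1$ machinery of Propositions \ref{prop:51} and \ref{prop:71}, Hecke stability of $J^{\cusp,M}_{\kap,S}$ comes from the eigenbasis in Theorems \ref{thm:31}--\ref{thm:32}, and Hecke stability of $J^{\cusp,0}_{\kap,S}$ is deduced from orthogonality. Where you diverge is in the orthogonality argument, and there your plan is both more laborious and shakier than necessary.

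The paper's orthogonality proof is a two-line trick you are missing. From Lemma \ref{lem:12} with $\mu=0$ one gets $\sum_{\nu\in\Xi}\psi_\nu=(\det S)^{1/2}\bfe(-n/8)\,\psi_0\|J=0$. Since the $q$-expansion of $\psi_\nu$ runs over exponents in $-S[\nu]/2+\ZZ$, comparing Fourier coefficients refines this to $\sum_{\mu\in\vLm_\nu}\psi_\mu\equiv 0$ for every class $\vLm_\nu=\{\lam\in\Xi: S[\lam]/2-S[\nu]/2\in\ZZ\}$. On the other side, Lemma \ref{lem:13} says $\phi_\mu$ is \emph{constant} on each $\vLm_\nu$. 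Hence, by Lemma \ref{lem:75},
\[
\La\phi,\psi\Ra\ \approx\ \sum_{\mu\in\Xi}\La\phi_\mu,\psi_\mu\Ra
=\sum_{\text{classes }\vLm_\nu}\Bigl\langle \phi_\nu,\ \sum_{\mu\in\vLm_\nu}\psi_\mu\Bigr\rangle=0.
\]
No adjunction, no (\ref{tag:15}), no $\bfa_S$-weights.

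By contrast, your plan to substitute (\ref{tag:15}) and ``move $\bigl(\begin{smallmatrix}1&j\\0&D_S\end{smallmatrix}\bigr)$ across the pairing, then apply (\ref{tag:53})'' has real obstacles: these matrices have determinant $D_S\neq 1$, so (\ref{tag:53}) does not apply to them, and adjunction of $\|_{\ell/2}$ for elements of $\GL_2(\RR)^+$ in the half-integral Petersson pairing is not a tautology here. Moreover the weights $\bfa_S(D_{0,\mu})^{-1}$ in (\ref{tag:15}) spoil the clean $\mu$-collapse you are hoping for. I would replace that paragraph with the Lemma \ref{lem:12}/\ref{lem:13} argument above; the rest of your outline then goes through exactly as in the paper.
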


\begin{proof}
We have already seen the last assertion in Proposition \ref{prop:51} and \ref{prop:71}. 
Our first task is to show that $J^{\cusp,M}_{\kap,S}$ and $J^{\cusp,0}_{\kap,S}$ are orthogonal. 

To see this, let $\phi\in J^{\cusp,M}_{\kap,S}$ and $\psi\in J^{\cusp,0}_{\kap,S}$. 
Applying Lemma \ref{lem:12} to the present setting,  we have 
\[\sum_{\mu\in\Xi}\psi_\mu=(\det S)^{1/2}\bfe(-n/8)\psi_0\|_{\kap-n/2}J=0.\] 
Comparing the Fourier coefficients, we can see $\sum_{\mu\in\vLm_\nu}\psi_\mu\equiv 0$ ($\vLm_\mu$ is defined in the proof of Proposition \ref{prop:51}). 
We now have 
\[\La \phi, \psi\Ra\approx\sum_{\mu\in\Xi}\La\phi_\mu, \psi_\mu\Ra\approx\sum_\nu\sum_{\mu\in\vLm_\nu}\La\phi_\nu, \psi_\mu\Ra=0, \]
using Lemma \ref{lem:13} and \ref{lem:75}. 
Since $J^{\cusp,M}_{\kap,S}$ is Hecke-invariant, so is $J^{\cusp,0}_{\kap,S}$. 
\end{proof}


\section{\bf Fourier coefficients of Jacobi Eisenstein series}\label{sec:8}

To complete the picture, we must show that the Jacobi Eisenstein series $E_{\kap,S}$ is an element of $J^M_{\kap,S}$. 
We here obtain an explicit formula for the Fourier-coefficients of $E_{\kap,S}$. 
The computation is based on that of Sugano's paper \cite{Su}. 

The letter $\kap$ stands for an even integer throughout this section. 
Put
\begin{align*}
\vGm_\infty&=\{\gam\in\vGm \;|\; j_\kap(\gam, (\tau, w))\equiv 1\}\\
&=\Big\{\pm\mtrx{1}{\ell}{0}{1}[0, \eta, \zet]\;\Big|\; \ell,\;\zet\in\ZZ,\; \eta\in L\Big\}. 
\end{align*}
The Eisenstein series on the Jacobi group is defined, for $\kap>n+2$, by the series
\[E_{\kap,S}(\tau,w)=\sum_{\gam\in\vGm_\infty\bsl\vGm}j_\kap(\gam, (\tau,w))^{-1}, \] 
and by analytic continuation for $\kap>\frac{n}{2}+2$ if $n$ is even and $\chi_S$ is trivial, and for $\kap\geq\frac{n}{2}+2$ otherwise. 

For our purpose, it is more convenient to work in adelic form.  
Put 
\[P_S(\QQ)=\Big\{\mtrx{*}{*}{0}{*}[0,*,*]\in J_S(\QQ)\Big\}. \]

Let $|\;|_\AA$ be the module of the idele group of $\QQ$ and put $\bfe_\AA(x)=\prod_v\bfe_v(x_v)$ for $x=(x_v)_v\in \AA$. 
Let $l_0$ be the characteristic function of $\prod_pL_p$. 
We define the function $l\Pr_0$ on $X_\AA$ by 
\begin{align*}
l\Pr_0(x)&=e^{-\pi S[x_\infty]}l_0(x_\bff) & \tx{for }x&=(x_v)\in X_\AA. 
\end{align*} 
Here, we write $x_\bff$ for the finite part of $x$. 
Put 
\begin{align*}
\wtl{E}_{\kap,S}(g; s)&=\sum_{\gam\in P_S(\QQ)\bsl J_S(\QQ)}\phi_{\kap,s}(\gam g) & \tx{for }g\in J_S(\AA), 
\end{align*}
where $\phi_{\kap, s}=\prod_v\phi_{\kap, s, v}$ is defined by  
\[\phi_{\kap,s}(p[\xi, \eta, \zet]\kap_1\kap_2)
=|t|_\AA^{\kap+s}\bfe_\AA(\zet)l\Pr_0(\xi)(c\iu+d)^{-\kap} \]
for 
\begin{align*}
&p=\mtrx{t}{*}{0}{t^{-1}}\in\SL_2(\AA), & &[\xi, \eta, \zet]\in H_S(\AA), \\
&\kap_1=\mtrx{*}{*}{c}{d}\in\SO_2(\RR), & &\kap_2\in\prod_p\SL_2(\ZZ_p)H_S(\ZZ_p). 
\end{align*}
Then it is immediate that
\begin{align*}
\wtl{E}_{\kap,S}(g; 0)&=E_{\kap,S}(g(\iu, 0))j_\kap(g, (\iu, 0))^{-1} &
\tx{for }g&\in J_S(\RR). 
\end{align*} 

For each place $v$ of $\QQ$, we put 
\begin{multline*}
I_{\kap, a, \alp, v}(g_v; s)=\\ 
\int_{\QQ_v\times X_v}\phi_{\kap,s,v}\Big([u,0,0]\mtrx{0}{-1}{1}{w}g_v;s\Big)\bfe_v(-aw-S(\alp,u))dwdu. 
\end{multline*}
Then \cite[Lemma 3.3]{Su} shows that
\[\wtl{E}_{\kap,S}(g; s)=\sum_{a\in\QQ, \;\alp\in X}c_{\kap, a, \alp}(g;s), \]
where
\beq
c_{\kap,a,\alp}(g;s)=\del(a=S[\alp]/2)\phi_{\kap,s}([\alp,0,0]g)
+\prod_vI_{\kap, a, \alp, v}(g_v; s).  \label{tag:81}
\eeq

To write down $I_{\kap, a, \alp, \infty}(g;s)$, we put
\[\ome(z;\lam, \mu)=\vGm(\mu)^{-1}z^\mu\int_0^\infty e^{-zt}(t+1)^{\lam-1}t^{\mu-1}dt\]
for $\mu,\;z\in \frkH\Pr$, letting $\frkH\Pr$ be the right half-plane $\{z\in\CC\;|\; \Re z>0\}$.  
Then $\ome(z; \lam, \mu)$ can be continued as a holomorphic function to the whole $\frkH\Pr\times\CC^2$ and satisfies 
\[\ome(z; \lam, 0)=1\] 
(see \cite[Theorem 3.1, (3.13), (3.15)]{Sh}). Assume that 
\[g=[\xi, \eta,0]\mtrx{y^{1/2}}{xy^{-1/2}}{0}{y^{-1/2}}\in J_S(\RR). \]

The proposition below is easily deduced from \cite[Proposition 3.4]{Su}. 

\begin{proposition}\label{prop:81}
If $a>S[\alp]/2$, then 
\begin{multline*}
I_{\kap, a, \alp, \infty}(g;s)=\frac{(-1)^{\kap/2}2^{\kap-n/2}\pi^{\kap(n,s)}}
{(\det S)^{1/2}\vGm(\kap(n,s))}\ome(2\pi y(2a-S[\alp]); \kap(n,s), s/2)\\
\times(a-S[\alp]/2)^{\kap(n,s)-1}q^a\bfe(S(\alp, \tau\xi+\eta))j_\kap(g, (\iu,0))^{-1}y^{s/2}. 
\end{multline*}
Here we put $\kap(n,s)=\kap+\tfrac{s-n}{2}$. 
\end{proposition}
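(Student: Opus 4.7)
The plan is to compute the archimedean integral in three moves: commute the translation by $[u,0,0]$ past $\gam_w=\mtrx{0}{-1}{1}{w}$, Iwasawa-decompose the resulting $\SL_2$-factor and evaluate the Gaussian integral over $u\in X_\infty$, and finally recognize the remaining $w$-integral as Shimura's confluent-hypergeometric representation. The whole computation parallels Sugano's Proposition 3.4, which our statement is declared to follow; the task reduces to performing the main manipulations and verifying that the normalizations match.

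The $\SL_2$-action on $H_S$ recalled in \S\ref{sec:1} gives $\gam_w^{-1}[u,0,0]\gam_w=[0,-u,0]$, whence $[u,0,0]\gam_wg=\gam_w[0,-u,0]g$; combined with the Heisenberg composition law $[0,-u,0][\xi,\eta,0]=[\xi,\eta-u,0]$, this reduces the argument of $\phi_{\kap,s,\infty}$ to $\gam_w[\xi,\eta-u,0]p_g$, with $p_g=\mtrx{y^{1/2}}{xy^{-1/2}}{0}{y^{-1/2}}$. Multiplying out gives $\gam_wp_g=\mtrx{0}{-y^{-1/2}}{y^{1/2}}{(x+w)y^{-1/2}}$, and Iwasawa-decomposing this as $\mtrx{Y^{1/2}}{*}{0}{Y^{-1/2}}\kap_{w,g}$ with $Y=y/((x+w)^2+y^2)$ extracts from $\phi_{\kap,s,\infty}$ the $\SL_2$-weight factor $Y^{(\kap+s)/2}(y\iu+(x+w))^{-\kap}y^{\kap/2}$.

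Conjugating $[\xi,\eta-u,0]$ past $p_g$ transforms its $X$-slot to $(y^{1/2}\xi,\,xy^{-1/2}\xi+y^{-1/2}(\eta-u))$, and a further twist by $\kap_{w,g}$ rotates this into an affine function of $u$ inside the Gaussian $l\Pr_0$. Completing the square in $u$ and applying the Gaussian Fourier identity on $X_\infty$ produces the factor $(\det S)^{-1/2}Y^{-n/2}$, the exponential $\bfe(S(\alp,\tau\xi+\eta))$, and a $w$-dependent residual of the form $\exp(-2\pi y(a-S[\alp]/2)\cdot\Phi(w))$ for a rational function $\Phi$. Combining these with the $\SL_2$-weight and the character $\bfe_\infty(-aw)$, everything reduces to an integral in $w$ alone.

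After translating $w\mapsto w-x$, the remaining $w$-integral takes the form
\[C\int_\RR(y\iu+w)^{-\kap}(y^2+w^2)^{(n-\kap-s)/2}\bfe_\infty(-(a-S[\alp]/2)w)\,dw\]
for an explicit constant $C$. Factoring $y^2+w^2=(w+y\iu)(w-y\iu)$ and substituting $w-y\iu=-2y\iu\cdot t$ on the appropriate contour casts this in the form of Shimura's integral representation
\[\ome(z;\lam,\mu)=\vGm(\mu)^{-1}z^\mu\int_0^\infty e^{-zt}(t+1)^{\lam-1}t^{\mu-1}\,dt\]
with $z=2\pi y(2a-S[\alp])$, $\lam=\kap(n,s)$, $\mu=s/2$. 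Collecting Jacobians and prefactors yields the constants $(-1)^{\kap/2}2^{\kap-n/2}\pi^{\kap(n,s)}\vGm(\kap(n,s))^{-1}$, the power $(a-S[\alp]/2)^{\kap(n,s)-1}$, and the remaining $q^a\bfe(S(\alp,\tau\xi+\eta))j_\kap(g,(\iu,0))^{-1}y^{s/2}$ from reassembly. The main obstacle is the bookkeeping in the Gaussian $u$-integral and the contour manipulation of the $w$-integral; both are carried out in full detail in Sugano's proof, so our task is chiefly to verify that his normalization conventions match ours.
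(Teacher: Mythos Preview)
Your proposal is correct and takes essentially the same approach as the paper: the paper simply states that the proposition ``is easily deduced from \cite[Proposition 3.4]{Su}'' without giving any computation, and your sketch unpacks precisely that computation (conjugate the Heisenberg translation through $\gam_w$, Iwasawa-decompose, do the Gaussian $u$-integral, and identify the $w$-integral with Shimura's $\ome$), explicitly deferring the bookkeeping to Sugano just as the paper does. Your intermediate steps (e.g.\ $\gam_w^{-1}[u,0,0]\gam_w=[0,-u,0]$, the form of $\gam_wp_g$, and the value of $Y$) check out, so there is nothing to add beyond noting that you have supplied more detail than the paper itself.
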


To represent $I_{\kap,a,\alp,p}$, an auxiliary function $I_p(S, (a, \alp); X)$ is needed. 
For the precise definition, we refer the reader to \cite[(2.21)]{Su}. 
Here we simply record a useful relation  
\beq
I_p(S, (a, \alp);p^{-s})=I_{\kap, a, \alp, p}\bigl(1; s-\kap+1+\tfrac{n}{2}\bigl) \label{tag:82}
\eeq
(see \cite[(3.18)]{Su}). 
We define the polynomial $\vrh_{p, S}$ by  
\[\vrh_{p,S}(X)=\begin{cases}
1-p^{-1}X^2 &\tx{if $p\in\frkS_0$, $2\nmid n$. }\\
1-\eta_p(S)p^{-1/2}X &\tx{if $p\in\frkS_1$, $2\nmid n$. }\\
1-\xi_p(S)p^{-1}X &\tx{if $p\in\frkS_0\cup\frkS_1$, $2|n$. }\\
1 &\tx{if $p\in\frkS_2$. }
\end{cases}\]

From now on, we assume that $(a, \alp)\in \ZZ_p\times L^*_p$ and $a\neq S[\alp]/2$, where $L^*_p$ is the closure of $L^*$ in $X_p$. 
Put 
\[I_{p, S, a, \alp}(X)=\vrh_{p,S}(X)^{-1}I_p(S, (a, \alp);X). \]

\begin{proposition}\label{prop:82}
The function $I_{p,S,a,\alp}$ is given as follows. 
\begin{enumerate}
\renewcommand\labelenumi{(\theenumi)}
\item If $n$ odd, then $I^{\sim}_{p,S,a,\alp}=l_{p,S, \Del_{a,\alp}}$, where 
\[I^\sim_{p,S,a,\alp}(X)
=(1-\Psi_p((-1)^{(n+1)/2}\Del_{a,\alp})p^{-1/2}X)X^{-\frkf_p(\Del_{a,\alp})}I_{p, S, a, \alp}(X). \]
\item If $n$ is even, then $I^{\sim}_{p,S,a,\alp}=l_{p,S, D_{a,\alp}}$, where 
\[I^\sim_{p,S,a,\alp}(X)=X^{-\ord_pD_{a,\alp}}I_{p, S, a, \alp}(X^2). \]
\end{enumerate}
\end{proposition}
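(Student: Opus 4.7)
The plan is to exploit the explicit local formula for $I_p(S,(a,\alp);X)$ given in \cite[\S 2]{Su} together with the identity (\ref{tag:82}) relating it to $I_{\kap,a,\alp,p}(1;s)$. In Sugano's formulation the function $I_p(S,(a,\alp);X)$ appears as an explicit rational function whose denominator is precisely $\vrh_{p,S}(X)$; consequently $I_{p,S,a,\alp}(X)$ is a Laurent polynomial whose shape depends only on the invariant $\frkf_p(\Del_{a,\alp})$ (when $n$ is odd) or $\ord_pD_{a,\alp}$ (when $n$ is even), on the associated quadratic character, and on the Witt class of $S$ at $p$. Once this explicit form is in hand, the proposition reduces to algebraic manipulations of Laurent polynomials and a case-by-case comparison with the definition of $l_{p,S,N}$ given in \S \ref{sec:3}.

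The verification splits naturally into six sub-cases, indexed by $p\in\frkS_0\cup\frkS_1\cup\frkS_2$ and the parity of $n$, mirroring the six branches in the definition of $l_{p,S,a}$. For each sub-case I would first apply the normalization prescribed in the definition of $I^\sim_{p,S,a,\alp}$: multiply by $X^{-\frkf_p(\Del_{a,\alp})}$ (odd $n$) or by $X^{-\ord_pD_{a,\alp}}$ together with substitution $X\mapsto X^2$ (even $n$), and in the odd case also by the Euler factor $1-\Psi_p((-1)^{(n+1)/2}\Del_{a,\alp})p^{-1/2}X$ designed to cancel the ramified-character contribution. To identify the resulting reciprocal Laurent polynomial with the appropriate combination of the building blocks $l_{e,\eps}$, $h_{e,\eps}$, and $\lam_{p,\cdot}$, one uses the elementary observation that $\lam_{p,a}$ is precisely the polynomial obtained by truncating the geometric series expansion of $(1-\Psi_p((-1)^ka)p^{-1/2}X)^{-1}$ at degree $\frkf_p(a)$. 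With this key identity the matching in the $\frkS_0$ and $\frkS_1$ cases follows by a direct computation, including the correction $+\eta_p(S)p^{1/2}\lam_{p,p^{-2}a}$ in the odd $\frkS_1$ case which mirrors the additional Jordan block of $S$ at such primes.

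The main obstacle is the $\frkS_2$ case, in which the radical of $(V_p,q_p)$ has dimension two and the Witt index of $S$ over $\QQ_p$ drops. Sugano's formula then involves extra unramified Euler factors which are \emph{not} absorbed by $\vrh_{p,S}$, and $I_{p,S,a,\alp}(X)$ has to be reorganized so as to produce the subtraction terms appearing in $l_{p,S,a}$: namely, $-\biggl(\dfrac{(-1)^kp^{-2}a}{p}\biggr)p^{1/2}\lam_{p,p^{-2}a}-p\lam_{p,p^{-4}a}$ when $n$ is odd, and $-\xi_p(S)p\,l_{\ord_pa-2,\xi_p(S)}$ when $n$ is even. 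One must also carefully track the interplay among the Hilbert symbol, the Hasse invariant $\eta_p(S)$, and the discriminant character $\chi_{S,p}$, and separately handle the prime $p=2$ where $\Psi_p$ has its non-standard behaviour. Apart from this bookkeeping, no new ingredient beyond Sugano's explicit formulas is required to conclude.
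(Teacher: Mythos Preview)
Your proposal is correct and follows the same strategy as the paper: both proofs amount to a case-by-case verification using Sugano's explicit formulas for $I_p(S,(a,\alp);X)$ from \cite[Proposition~2.14~(ii)]{Su}, followed by routine manipulation of Laurent polynomials to match the definition of $l_{p,S,N}$.

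The one point where the paper is more precise is the indexing of cases. You organize the computation into six sub-cases according to $p\in\frkS_i$ and the parity of $n$; the paper instead follows Sugano's own parametrization by the pair $(n_0,\pal)$ attached to $S$ and the pair $(n_0',\pal')$ attached to the maximal reduction $S^\sim$ of $S_{a,\alp}$, together with the exponent $f$ in $S_{a,\alp}=S^\sim\bigl[\Left 1 & x\\ 0 & p^f\Right\bigr]$. This yields the roughly two dozen rows of Table~\ref{tab:81}. Within a fixed $\frkS_i$ there are several possible values of $(n_0',\pal')$, each giving a genuinely different expression in Sugano's list, so your six cases will each need to be further subdivided. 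The paper also records the bookkeeping identities $\frkf_p(\Del_{a,\alp})=f+[\pal'/2]$ and $\ord_pD_{a,\alp}=2f-[\pal/2]+\pal'$, which are what translate Sugano's parameter $f$ into the normalizing exponents appearing in your definition of $I^\sim_{p,S,a,\alp}$. Once those relations are in hand, the verification is indeed purely mechanical, exactly as you say.
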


\begin{proof}
\begin{table}
\caption{}
\label{tab:81}
\begin{tabular}{c|c|c|c}
$(n_0, \pal)$ & $(n_0\Pr, \pal\Pr)$ & $I^\sim_{p, S, a, \alp}$\\\hline
$(0 ,0 )$ & $(1 ,0 )$ & $l_{2f}$ \\
$(0 ,0 )$ & $(1 ,1 )$ & $l_{2f+1}$ \\
$(1 ,0 )$ & $(0 ,0 )$ & $l_f-p^{-1/2}l_{f-1}$ \\
$(1 ,0 )$ & $(2 ,0 )$ & $l_f+p^{-1/2}l_{f-1}$ \\
$(1 ,0 )$ & $(2 ,1 )$ & $l_f$ \\
$(1 ,1 )$ & $(0 ,0 )$ & $l_f-p^{-1/2}l_{f-1}+p^{1/2}(l_{f-1}-p^{-1/2}l_{f-2})$ \\
$(1 ,1 )$ & $(2 ,1 )$ & $l_f+p^{1/2}l_{f-1}$ \\
$(1 ,1 )$ & $(2 ,2 )$ & $l_{f+1}+p^{-1/2}l_f+p^{1/2}(l_f+p^{-1/2}l_{f-1})$ \\
$(2 ,0 )$ & $(1 ,0 )$ & $l_{2f,-1}$ \\
$(2 ,0 )$ & $(3 ,1 )$ & $l_{2f+1,-1}$ \\
$(2 ,1 )$ & $(1 ,0 )$ & $h_{2f, 1}$ \\
$(2 ,1 )$ & $(1 ,1 )$ & $h_{2f+1, 1}$ \\
$(2 ,1 )$ & $(3 ,1 )$ & $h_{2f+1, -1}$ \\
$(2 ,1 )$ & $(3 ,2 )$ & $h_{2f+2, -1}$ \\
$(2 ,2 )$ & $(1 ,1 )$ & $l_{2f, -1}+pl_{2f-2, -1}$ \\
$(2 ,2 )$ & $(3 ,2 )$ & $l_{2f+1, -1}+pl_{2f-1, -1}$ \\
$(3 ,1 )$ & $(2 ,0 )$ & $l_f+p^{-1/2}l_{f-1}-p^{1/2}(l_{f-1}+p^{-1/2}l_{f-2})$ \\
$(3 ,1 )$ & $(2 ,1 )$ & $l_f-p^{1/2}l_{f-1}$ \\
$(3 ,1 )$ & $(4 ,2 )$ & $l_{f+1}-p^{-1/2}l_f-p^{1/2}(l_f-p^{-1/2}l_{f-1})$ \\
$(3 ,2 )$ & $(2 ,1 )$ & $l_f-pl_{f-2}$ \\
$(3 ,2 )$ & $(2 ,2 )$ & $l_{f+1}+p^{-1/2}l_f-p(l_{f-1}+p^{-1/2}l_{f-2})$ & $f>0$\\
          &           & $l_1+(p^{-1/2}+p^{1/2})l_0$ & $f=0$\\
$(3 ,2 )$ & $(4 ,2 )$ & $l_{f+1}-p^{-1/2}l_f-p(l_{f-1}-p^{-1/2}l_{f-2})$ & $f>0$\\
          &           & $l_1-(p^{-1/2}+p^{1/2})l_0$ & $f=0$\\
$(4 ,2 )$ & $(3 ,1 )$ & $l_{2f}-pl_{2f-2}$ \\
$(4 ,2 )$ & $(3 ,2 )$ & $l_{2f+1}-pl_{2f-1}$
\end{tabular}
\end{table}
In discussing the proof of this result, we explain Sugano's formulation and its relationship with our notation.   
Let $n_0=n_{0,p}$ be the dimension of an anisotropic kernel of $S$. 
Set $L\Pr_p=\{x\in L_p^*\;|\;S[x]/2\in p^{-1}\ZZ_p\}$. 
Note that $L\Pr_p/L_p$ is a vector space over $\FF_p$ and its dimension is denoted by $\pal=\pal_p$. 
We can easily check that $s_p(S)=\pal$, using the description of $L_p$ in the proof of Lemma \ref{lem:52} or \ref{lem:71}. 

Put $S_{a,\alp}=\Left S & S\alp \\ \trs\alp S & 2a \Right$. 
We can choose $(a_0, \alp_0)\in \ZZ_p\times L^*_p$, $x\in L_p$ and a non-negative integer $f$ to satisfy the following conditions:  
\begin{enumerate}
\renewcommand\labelenumi{(\theenumi)}
\item $S_{a,\alp}=S_{a_0,\alp_0}\bigl[\Left 1 & x \\ 0 & p^f\Right\bigl]$;
\item $\ZZ^{n+1}$ is a maximal integral lattice with respect to $S_{a_0,\alp_0}$ 
\end{enumerate}(see \cite[Lemma 2.5]{Su}). 
For simplicity we put $S^\sim=S_{a_0,\alp_0}$. 
We define $n_0\Pr$ and $\pal\Pr$, replacing $S$ by $S^\sim$ in the definitions of $n_0$ and $\pal$ respectively.

Suppose that $n$ is odd. 
Then $\xi_p(S_{a,\alp})=\Psi_p((-1)^{(n+1)/2}\Del_{a,\alp})$. 
It follows from (\ref{tag:11}) that 
\[\Del_{a,\alp}=\det S_{a,\alp}=p^{2f}\det S^\sim=p^{2f}\frkd_{S^\sim}d_{S^\sim}^2, \]
and hence  
\beq
\frkf_p(\Del_{a,\alp})=f+[\pal\Pr/2]. \label{tag:83}
\eeq
If $n$ is even, then 
\[d_SD_{a,\alp}=(\det S_{a,\alp})/2=p^{2f}(\det S^\sim)/2=p^{2f}b_{S^\sim}d_{S^\sim}^2, \]
and hence 
\beq
\ord_pD_{a,\alp}=2f-[\pal/2]+\pal\Pr. \label{tag:84}
\eeq 

Sugano lists up explicit formulas of $I_p(S, (a,\alp);X)$ for all possible pairs of $(n_0, \pal)$, $(n_0\Pr, \pal\Pr)$ in \cite[Proposition 2.14 (\roman{two})]{Su}, which combined with a case by case calculation shows that $\til{I}_{p,S,a,\alp}$ is given by Table \ref{tab:81}. 
Using (\ref{tag:83}) and (\ref{tag:84}), we can verify the desired equality.  
\end{proof}

\begin{corollary}\label{cor:81}
If $\kap$ be even, then $E_{\kap,S}\in J^M_{\kap,S}$, i.e., 
there exists a constant $C_{\kap,S}$ and a function $A:\NN\to\CC$ such that
\[E_{\kap,S}(\tau,w)=\sum_{(a,\alp)\in\calt^0}q^a\bfe(S(\alp,w))
+C_{\kap,S}\sum_{(a,\alp)\in\calt^+}A(D_{a,\alp})q^a\bfe(S(\alp,w)). \] 
Moreover, by putting $k=\kap-\bigl[\tfrac{n+1}{2}\bigl]$, the function $A$ is given as follows.  
\begin{enumerate}
\renewcommand\labelenumi{(\theenumi)}
\item If $n$ is odd, then 
\begin{align*}
A(N)&=L(1-k,\psi_{(-1)^k\del_SN})\frkf_{\del_SN}^{k-1/2}\prod_pl_{p,S,\del_SN}(p^{k-1/2})&\tx{for }N&\in\NN. 
\end{align*}
\item If $n$ is even, then 
\begin{align*}
A(N)&=N^{(k-1)/2}\prod_pl_{p,S,N}(p^{(k-1)/2})&\tx{for }N&\in\NN. 
\end{align*}
\end{enumerate}
\end{corollary}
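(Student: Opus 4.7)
The plan is to evaluate the adelic Fourier expansion \eqref{tag:81} of $\wtl E_{\kap,S}(g;s)$ at $s=0$, recalling that $\wtl E_{\kap,S}(g;0)=E_{\kap,S}(g(\iu,0))j_\kap(g,(\iu,0))^{-1}$ on $J_S(\RR)$. Each Fourier coefficient splits as $\del(a=S[\alp]/2)\phi_{\kap,0}([\alp,0,0]g)+\prod_vI_{\kap,a,\alp,v}(g_v;0)$. First I would dispose of $(a,\alp)\in\calt^0$: Proposition~\ref{prop:81} writes the archimedean integral as an explicit constant times $(a-S[\alp]/2)^{\kap-n/2-1}$, which vanishes when $a=S[\alp]/2$ since $\kap>n/2+1$. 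Thus only the boundary term survives, and the definitions of $\phi_{\kap,0}$ and $l_0$ immediately produce the Fourier coefficient $1$ for each monomial $q^{S[\alp]/2}\bfe(S(\alp,w))$ with $\alp\in L^*$, yielding the first sum in the claimed expression.

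For $(a,\alp)\in\calt^+$ I would multiply all the local factors. Proposition~\ref{prop:81} with $\ome(z;\lam,0)=1$ gives the archimedean integral as
\[\frac{(-1)^{\kap/2}2^{\kap-n/2}\pi^{\kap-n/2}}{(\det S)^{1/2}\vGm(\kap-n/2)}(a-S[\alp]/2)^{\kap-n/2-1}q^a\bfe(S(\alp,\tau\xi+\eta))j_\kap^{-1}. \]
At each finite place, \eqref{tag:82} converts $I_{\kap,a,\alp,p}(1;0)=I_p(S,(a,\alp);p^{n/2+1-\kap})$, and Proposition~\ref{prop:82} rewrites this as
\[\vrh_{p,S}(X)\,X^{\frkf_p(\Del_{a,\alp})}\bigl(1-\Psi_p((-1)^{(n+1)/2}\Del_{a,\alp})p^{-1/2}X\bigr)^{-1}l_{p,S,\Del_{a,\alp}}(X)\]
at $X=p^{n/2+1-\kap}=p^{1/2-k}$ in the odd-$n$ case (and an analogous expression in the even case). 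The reciprocality of $l_{p,S,N}$ lets me replace this argument by $p^{k-1/2}$ (respectively $p^{(k-1)/2}$) to match the statement.

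Taking the global product, the archimedean $(a-S[\alp]/2)^{\kap-n/2-1}$ becomes a power of $D_{a,\alp}$ which, combined with $\prod_p p^{(1/2-k)\frkf_p(\Del_{a,\alp})}=\frkf_{\Del_{a,\alp}}^{1/2-k}$, delivers the $\frkf_N^{k-1/2}$ factor. The Euler product $\prod_p(1-\Psi_p((-1)^{(n+1)/2}\Del_{a,\alp})p^{-k})^{-1}=L(k,\psi_{(-1)^k\del_SN})$ transforms into $L(1-k,\psi_{(-1)^k\del_SN})$ via the functional equation of the completed Dirichlet $L$-function, up to a gamma/pi ratio absorbed into an $(a,\alp)$-independent prefactor. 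All remaining constants---$\prod_p\vrh_{p,S}(p^{1/2-k})$, the Weil constants $\gam_{S,v}(1)$, the archimedean normalization, and the $(\det S)$-power---depend only on $(\kap,S)$ and collapse into $C_{\kap,S}$. Since every surviving dependence on $(a,\alp)$ then factors through $N=D_{a,\alp}$, this simultaneously proves $E_{\kap,S}\in J^M_{\kap,S}$ and yields the formula for $A(N)$. The main obstacle is the bookkeeping in this last step: matching the local symbol $\Psi_p((-1)^{(n+1)/2}\Del_{a,\alp})$ with $\psi_{(-1)^k\del_SN}(p)$ prime-by-prime using \eqref{tag:11} and the parity of $\kap$, correctly isolating the conductor $\frkd_{(-1)^k\del_SN}$ from the functional equation, and verifying that all Weil constants and gamma factors combine cleanly into a single $(a,\alp)$-independent $C_{\kap,S}$. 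The even-$n$ case follows the same template with $\xi_p(S)$-type symbols and $D_{a,\alp}$ replacing $\Del_{a,\alp}$.
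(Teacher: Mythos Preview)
Your approach is essentially the same as the paper's: both derive the corollary by evaluating the adelic expansion \eqref{tag:81} at $s=0$ via \eqref{tag:82}, Proposition~\ref{prop:81}, and Proposition~\ref{prop:82}, and your expanded bookkeeping (the $\frkd_{\del_SN}^{k-1/2}$ from the archimedean power cancelling against the conductor factor in the functional equation, with the parity of the character fixed by $k$ alone) is exactly what underlies the paper's one-line ``easily follows.'' One small correction: you invoke Proposition~\ref{prop:81} at $a=S[\alp]/2$ to kill the product term on $\calt^0$, but that proposition is stated only for $a>S[\alp]/2$; the paper instead cites \cite[Proposition~3.4~(i)]{Su} directly for the vanishing of $I_{\kap,a,\alp,\infty}(g;0)$ in that boundary case, so you should appeal to Sugano there rather than extrapolate the formula.
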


\begin{proof}
Note that $I_{k, a, \alp, \infty}(g;0)=0$ (see \cite[Proposition 3.4 (\roman{one})]{Su}).  
Corollary \ref{cor:81} easily follows from (\ref{tag:81}), (\ref{tag:82}), Proposition \ref{prop:81} and \ref{prop:82}.   
\end{proof}


\section{\bf Fourier-Jacobi coefficients of Siegel Eisenstein series}\label{sec:9}

The symplectic group $\Sp_\ell$ is an algebraic group defined over $\QQ$, the group of $D$-valued points of which is given by 
\[\Sp_\ell(D)
=\Big\{\alp\in\GL_{2\ell}(D)\;|\;\trs\alp\mtrx{0}{-\ono_\ell}{\ono_\ell}{0}\alp
=\mtrx{0}{-\ono_\ell}{\ono_\ell}{0}\Big\} \]
for every $\QQ$-algebra $D$. The archimedean part $\Sp_\ell(\RR)$ of $\Sp_\ell$ acts transitively on Siegel upper half-space $\frkH_\ell$ by $\alp Z=(aZ+b)(cZ+d)^{-1}$ for $\alp=\Left a & b \\ c & d\Right\in\Sp_\ell(\RR)$ and $Z\in\frkH_\ell$. 
We define the automorphy factor $j_\kap$ on $\Sp_\ell(\RR)\times\frkH_\ell$ by $j_\kap(\alp,Z)=\det(cZ+d)^\kap$. 

Recall that the Siegel Eisenstein series 
$E^\ell_\kap$ on $\frkH_\ell$ is defined by
\[E^\ell_\kap(Z)=\sum_{\{C,D\}}\det(CZ+D)^{-\kap}, \]
where $\{C, D\}$ runs over a complete set of representatives of the equivalent classes of symmetric coprime pairs of degree $\ell$. 

Let $S_\ell(\ZZ)$ (resp. $T^+_\ell$) be the set of integral symmetric (resp. positive definite symmetric half-integral) matrices of size $\ell$. 
As is well-known, the $h$-th Fourier coefficient of $E^\ell_\kap$ is equal to 
\beq
(2(\iu\pi)^\kap)^\ell\vGm_\ell(\kap)^{-1}\det(2h)^{\kap-(\ell+1)/2}\prod_pb_p(h,\kap). \label{tag:91}
\eeq
for $h\in T^+_\ell$. Here we put $\Gam_\ell(s)=\pi^{\ell(\ell-1)/4}\prod_{i=0}^{\ell-1}\vGm(s-i/2)$. 
Recall that the Siegel series $b_p(h,s)$ for $h$ is defined by  
\[b_p(h,s)=\sum_{\alp\in S_\ell(\QQ_p)/S_\ell(\ZZ_p)}\bfe_p(-\tr(h\alp))\nu(\alp)^{-s}, \]
where we put $S_\ell(R)=S_\ell(\ZZ)\otimes_\ZZ R$ for a ring $R$ and define
$\nu(\alp)$ to be the product of the denominator ideals of fundamental divisors of $\alp$. 

We note that there exists polynomial $f_p(h;X)$ such that 
\[f_p(h;p^{-s})=b_p(h,s).\]  
Put
\[\gam_{p,h}(X)=\begin{cases} 
\dfrac{(1-X)\prod_{j=1}^{\ell/2}(1-p^{2j}X^2)}{1-\xi_p(h)p^{\ell/2}X} &\tx{if $2|\ell$. }\\
(1-X)\prod_{j=1}^{\strut{(\ell-1)/2}}(1-p^{2j}X^2)&\tx{if $2\nmid \ell$. }
\end{cases}\]
Then there exists a polynomial $F_p(h;X)$ such that 
\[f_p(h;X)=\gam_{p,h}(X)F_p(h;X). \] 

On the other hand, let $F$ be a Siegel modular form of weight $\kap$ with respect to $\Sp_{n+1}(\ZZ)$. 
The $S/2$-th Fourier-Jacobi coefficient $F_{S/2}$ of $F$ is defined by
\[F_{S/2}(\tau,w)=\sum_{(a,\alp)\in\calt^0\cup\calt^+}
c_F(2^{-1}S_{a,\alp})q^a\bfe(S(\alp,w)). \]
As is well-known, $F_{S/2}\in J_{\kap,S}$. 

The following proposition is the principal result of this section, which connects the Siegel Eisenstein series with the Jacobi Eisenstein series. 

\begin{proposition}\label{prop:91}
The $S/2$-th Fourier-Jacobi coefficient of $E^{n+1}_\kap$ is equal to 
\[(2(\iu\pi)^\kap)^n\vGm_n(\kap)^{-1}
(\det S)^{\kap-(n+1)/2}E_{\kap,S}(\tau,w)\prod_pb_p(2^{-1}S,\kap). \]
\end{proposition}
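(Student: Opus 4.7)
The plan is to verify the identity by matching Fourier coefficients on both sides, indexed by pairs $(a,\alp)\in\calt^0\cup\calt^+$. By definition of the $S/2$-th Fourier-Jacobi coefficient, the left-hand side equals
\[\sum_{(a,\alp)\in\calt^0\cup\calt^+}c_{E^{n+1}_\kap}(2^{-1}S_{a,\alp})q^a\bfe(S(\alp,w)),\]
and Corollary \ref{cor:81} gives the Fourier expansion of the right-hand side in the same form. It therefore suffices to match coefficients $(a,\alp)$ by $(a,\alp)$, splitting into the positive definite and semi-definite cases.

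For $(a,\alp)\in\calt^+$, the matrix $S_{a,\alp}/2$ is positive definite half-integral, so formula (\ref{tag:91}) gives a closed expression for $c_{E^{n+1}_\kap}(2^{-1}S_{a,\alp})$. Using the Schur complement identity $\det S_{a,\alp}=(\det S)(2a-S[\alp])$ together with the gamma-function relation $\vGm_{n+1}(\kap)=\pi^{n/2}\vGm(\kap-n/2)\vGm_n(\kap)$, I would separate this expression into the proposed global constant $(2(\iu\pi)^\kap)^n\vGm_n(\kap)^{-1}(\det S)^{\kap-(n+1)/2}\prod_pb_p(2^{-1}S,\kap)$ times an archimedean factor involving $(2a-S[\alp])^{\kap-(n+2)/2}$ times the non-archimedean ratio $\prod_p b_p(2^{-1}S_{a,\alp},\kap)/b_p(2^{-1}S,\kap)$. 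The archimedean factor should match exactly the contribution to $C_{\kap,S}A(D_{a,\alp})$ arising from $I_{\kap,a,\alp,\infty}$ in Proposition \ref{prop:81} (using $\ome(\,\cdot\,;\lam,0)=1$), reducing the problem to a local identity at each finite prime.

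The local step rests on Proposition \ref{prop:92}, which identifies $l_{p,S,\cdot}$ with the essential part $\til F_p(2^{-1}S_{a,\alp};X)$ of the Siegel series. Combining this with the explicit form of the denominator polynomial $\gam_{p,h}$ for $h=2^{-1}S_{a,\alp}$ and $h=2^{-1}S$ produces the ratio $b_p(2^{-1}S_{a,\alp},\kap)/b_p(2^{-1}S,\kap)$ up to a $\vrh_{p,S}$-factor that matches the prefactor of $I_{p,S,a,\alp}$ appearing in $A(D_{a,\alp})$ via Proposition \ref{prop:82}; a short case analysis according to $p\in\frkS_0,\frkS_1,\frkS_2$ and the parity of $n$ verifies the cancellation.

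For $(a,\alp)\in\calt^0$, the matrix $S_{a,\alp}/2$ has rank $n$ and is equivalent under a unimodular change of coordinates to the block matrix $\diag[S/2,0]$. The Siegel $\Phi$-operator then gives $c_{E^{n+1}_\kap}(2^{-1}S_{a,\alp})=c_{E^n_\kap}(2^{-1}S)$, and applying (\ref{tag:91}) to $E^n_\kap$ produces exactly the proposed global constant; this matches the $\calt^0$-part of Corollary \ref{cor:81}, whose Fourier coefficients are $1$. The principal obstacle is the local comparison in the positive definite case, which depends on Proposition \ref{prop:92}; its proof in turn builds on Sugano's case-by-case determination of $I_p(S,(a,\alp);X)$ used in \S\ref{sec:8}, so the archimedean matching and the semi-definite case are comparatively routine.
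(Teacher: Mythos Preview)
Your approach is circular as written. The key local step invokes Proposition~\ref{prop:92}, but in the paper's logical order Proposition~\ref{prop:92} is \emph{deduced from} Proposition~\ref{prop:91}: its proof combines (\ref{tag:91}), Proposition~\ref{prop:82}, and Proposition~\ref{prop:91} to obtain the global equality $\prod_p\wtl F_p(2^{-1}S_{a,\alp};p^{k'-1/2})=\prod_pG_p(p^{k'-1/2})$, and then uses \cite[Lemma~10.1]{Ik2} to separate the local factors. So you cannot cite Proposition~\ref{prop:92} here. To make your coefficient-matching strategy stand on its own you would need an \emph{independent} proof that $\wtl F_p(2^{-1}S_{a,\alp};X)$ agrees with $l_{p,S,\cdot}(X)$, i.e.\ a direct local comparison of the Siegel series with Sugano's polynomials $I_p(S,(a,\alp);X)$. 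That is possible in principle (both sides are computable), but it is a nontrivial case-by-case computation that you have not supplied, and it is precisely what the paper avoids.

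The paper's proof proceeds differently and more structurally: it passes to the adelic Siegel Eisenstein series $\wtl E^{n+1}_\kap(x;s)$, integrates out the $S_n$-variable to form the Fourier--Jacobi coefficient $\wtl E_{S/2}(g;s)$, and invokes \cite[Theorem~3.2]{Ik} to write this as $\sum_{\gam\in P_S(\QQ)\bsl J_S(\QQ)}\phi'_{\kap,s}(\gam g)$ for an explicit section $\phi'_{\kap,s}$. One then checks, place by place, that $\phi'_{\kap,s,v}=c_v\cdot\phi_{\kap,s,v}$, where $c_p=b_p(2^{-1}S,\kap+s)$ and $c_\infty$ is the archimedean constant in the statement (this uses maximality of $L_p$ at the finite places and the confluent hypergeometric evaluation $\Xi(\ono_n,2^{-1}S;\kap,0)$ at infinity). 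Since the two Eisenstein series are built from proportional sections, they agree up to the product of the $c_v$, and one never needs the explicit Fourier coefficients of either side. In particular, the comparison of Siegel series with Sugano's local integrals is a \emph{consequence} of this argument (Proposition~\ref{prop:92}), not an input to it.
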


\begin{proof}
To prove this result, it is again best to consider Eisenstein series on the adele group. 
Put $P_\ell=\{\Left * & * \\ \bf0_\ell &*\Right\in\Sp_\ell\}$ and $\bfi=\iu\ono_\ell\in\frkH_\ell$. 
The standard maximal compact subgroup $C_\ell$ of $\Sp_\ell(\AA)$ is defined by
\begin{align*}
C_\ell&=\prod_vC_{\ell,v}, &
C_{\ell,v}&=\begin{cases}
\{x\in \Sp_\ell(\RR)\;|\;x(\bfi)=\bfi\} &\tx{if $v=\infty$. } \\
\Sp_\ell(\QQ_p)\cap\GL_{2\ell}(\ZZ_p) &\tx{if $v=p$. }
\end{cases}
\end{align*}

We define the series $\wtl{E}^\ell_\kap(x;s)$ by 
\begin{align*}
\wtl{E}^\ell_\kap(x;s)&=\sum_{\gam\in P_\ell(\QQ)\bsl\Sp_\ell(\QQ)}\vep_{\kap,s}(\gam x) &
\tx{for }x&\in \Sp_\ell(\AA). 
\end{align*}
Here $\vep_{\kap,s}(x)=\prod_v\vep_{\kap,s,v}(x_v)$ is defined by
\[\vep_{\kap,s,v}(x_v)=|\det a_v|^{\kap+s}\times 
\begin{cases}
j_\kap(w_\infty,\bfi)^{-1}&\tx{if $v=\infty$. }\\
1&\tx{if $v\neq\infty$. }
\end{cases}\]
for $x=pw\in P_\ell(\AA)C_\ell$ with $p=\Left a & * \\ 0 & \trs a^{-1}\Right$. Then 
\begin{align*}
\wtl{E}^\ell_\kap(g;0)&=E^\ell_\kap(g(\bfi))j_\kap(g,\bfi)^{-1} &
&\tx{for }g\in\Sp_\ell(\RR). 
\end{align*}

The $S/2$-th Fourier-Jacobi coefficient of $\wtl{E}^{n+1}_\kap$ is given by 
\[\wtl{E}_{S/2}(g;s)=\int_{S_n(\QQ)\bsl S_n(\AA)}\wtl{E}^{n+1}_\kap(\tau(z)g; s)\bfe_\AA(-\tr(Sz)/2)dz \]
for $g\in J_S(\AA)$, where put $\tau(z)=\mtrx{\ono_{n+1}}{z}{0}{\ono_{n+1}}$ and embed $S_n(\AA)$ in the upper left $n\times n$ block of $S_{n+1}(\AA)$. 
Put 
\begin{align*}
\La x,y,z\Ra&=\begin{pmatrix}
\ono_n & x & z-x\trs y & y \\
       & 1 & \trs y    &   \\
       &   & \ono_n    &   \\
       &   & -\trs x   & 1
\end{pmatrix}, 
& \bar\alp&=
\begin{pmatrix}
\ono_n &   &         &   \\
       & a &         & b \\
       &   & \ono_n  &   \\
       & c &         & d
\end{pmatrix}
\end{align*}
for $x$, $y\in X$, $z\in S_n(\QQ)$ and $\alp=\Left a & b \\ c & d\Right\in\SL_2(\QQ)$, and let 
\begin{align*}
J\Pr_S&=\{\bar\alp\cdot\La x,y,z\Ra\}, & \ker S&=\{\La 0,0,z\Ra \;|\;\tr(Sz)=0\}. 
\end{align*}
Note that the quotient group $J\Pr_S/\ker S$ is the Jacobi group $J_S$. 
Using this identification, we can regard $\wtl{E}_{S/2}(g;s)$ as a function on $J_S(\AA)$ in view of $\wtl{E}_{S/2}(\La0,0,z\Ra g;s)=\bfe_\AA(\tr(Sz)/2)\wtl{E}_{S/2}(g;s)$. 

It is shown in the proof of \cite[Theorem 3.2]{Ik} that  
\[\wtl{E}_{S/2}(g;s)=\sum_{\gam\in P_S(\QQ)\bsl J_S(\QQ)}\phi\Pr_{\kap,s}(\gam g)\]
where $\phi\Pr_{\kap,s}(g)=\prod_v\phi\Pr_{\kap,s,v}(g_v)$ is defined by  
\begin{align*}
\phi\Pr_{\kap,s,v}(g_v)
&=\int_{S_n(\QQ_v)}\vep_{k,s,v}(\xi_0\tau(z)g_v)\bfe_v(-\tr(Sz)/2)dz, \\
\xi_0&=\begin{pmatrix}         
              &   & -\ono_n &   \\ 
              & 1 &         &   \\
\ono_n        &   &         &   \\
              &   &         & 1 
\end{pmatrix}. 
\end{align*} 

Observe that 
\[\phi\Pr_{\kap,s,v}(p[x,y,z]w)=|t|_v^{\kap+s}\bfe_v(z)\phi\Pr_{\kap,s,v}(x)\times
\begin{cases}
(c\iu+d)^{-\kap} &\tx{if $v=\infty$ }\\
1&\tx{if $v\neq\infty$ }
\end{cases}\] 
for $p=\Left t & * \\ 0 & t^{-1}\Right\in \SL_2(\QQ_v)$, $[x,y,z]\in H_S(\QQ_v)$ and $w=\Left * & * \\ c & d \Right\in C_{1,v}$. 

We can easily observe that
\begin{align*}
\phi\Pr_{\kap,s, p}([x,0,0])&=\phi\Pr_{\kap,s,p}\Big([x,0,0][0,y,0]\mtrx{1}{b}{0}{1}\Big)\\
&=\bfe_p(S(x,y)+bS[x]/2)\phi\Pr_{\kap,s,p}([x,0,0])
\end{align*}
for $y\in\ZZ^n_p$ and $b\in\ZZ_p$. 
As $\ZZ^n_p$ is a maximal lattice with respect to $S$, it follows that
\[\phi\Pr_{\kap,s,p}([x,0,0])=\del(x\in\ZZ^n_p)b_p(2^{-1}S,\kap+s). \]
We thus conclude that
\begin{align*}
\phi\Pr_{\kap,s,p}(g)&=\phi_{\kap,s,p}(g)b_p(2^{-1}S,\kap+s) & &\tx{for }g\in J_S(\QQ_p) 
\end{align*} 
(see \S \ref{sec:8} for the definition of $\phi_{\kap,s,p}$).  
It is immediate that
\[\phi\Pr_{\kap,s,\infty}([x,0,0])=e^{-\pi S[x]}\Xi\bigl(\ono_n, 2^{-1}S;\kap+\tfrac{s}{2},\tfrac{s}{2}\bigl), \]
where the confluent hypergeometric function $\Xi(Y, S; s, s\Pr)$ is defined by
\[\int_{S_n(\RR)}\bfe(-2^{-1}\tr(SX))\det(X+\iu Y)^{-s}\det(X-\iu Y)^{-s\Pr}dX\]
for $s$, $s\Pr\in\CC$ and $0<Y\in S_n(\RR)$. Since
\[\Xi(\ono_n, 2^{-1}S; \kap, 0)
=(2(\iu\pi)^\kap)^n\vGm_n(\kap)^{-1}(\det S)^{\kap-(n+1)/2}e^{-\pi\tr(S)}, \] 
we obtain 
\[\phi\Pr_{\kap,s,\infty}(g)
=(2(\iu\pi)^\kap)^n\vGm_n(\kap)^{-1}(\det S)^{\kap-(n+1)/2}e^{-\pi\tr(S)}\phi_{\kap,s,\infty}(g). \]
All of these results combine to give Proposition \ref{prop:91}.  
\end{proof}

\begin{lemma}\label{lem:91}
We abbreviate $s=s_p(S)$, $\xi=\xi_p(S)$ and $\eta=\eta_p(S)$. 
\begin{enumerate}
\renewcommand\labelenumi{(\theenumi)}
\item If $n$ is odd, then $f_p(2^{-1}S;X)$ equals
\[\begin{cases}
(1-X)(1+\eta p^{(n+1)/2}X)\prod_{j=1}^{(n-1)/2}(1-p^{2j}X^2) &\tx{if $s=1$. }\\
(1-X)\prod_{j=1}^{\strut{(n+s-1)/2}}(1-p^{2j}X^2) &\tx{if $s\neq1$. }
\end{cases}\]
\item If $n$ is even, then $f_p(2^{-1}S;X)$ equals
\[\begin{cases}
(1-X)\prod_{j=1}^{n/2}(1-p^{2j}X^2) &\tx{if $s=1$. }\\
(1-X)(1+(-1)^{s/2}\xi p^{(n+s)/2}X)\prod_{j=1}^{\strut{(n+s)/2-1}}(1-p^{2j}X^2) &\tx{if $s\neq1$. }
\end{cases}\]
\end{enumerate}
\end{lemma}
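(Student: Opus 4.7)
The formulas in Lemma \ref{lem:91} are classical evaluations of the Siegel series $b_p(h; s)$ in the special case where $h = S/2$ arises from a maximal integral lattice at $p$. The approach is to put $(L_p, S)$ into a local normal form over $\ZZ_p$ and then apply Kitaoka's explicit formula for the Siegel series.

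From the proofs of Lemma \ref{lem:52} (for $n$ odd) and Lemma \ref{lem:71} (for $n$ even), $(L_p, S)$ admits an orthogonal decomposition into hyperbolic planes with respect to $S$ and an anisotropic kernel $Z_p$ of dimension at most four, with explicit Gram matrices. The parameter $s = s_p(S)$ equals the $\FF_p$-dimension of the radical of $q_p = (S/2)\bmod p$, and the case analysis there shows that $s = 0$ iff $S/2$ is unimodular on $L_p$; $s = 1$ iff $S/2$ has the normal form (unimodular) $\oplus \langle pu\rangle$ for some $u \in \ZZ_p^\times$; and $s = 2$ iff $S/2$ has the normal form (unimodular) $\oplus\, pV$ for a unimodular binary block $V$. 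In each case, the invariant $\eta_p(S)$ (for $n$ odd) or $\xi_p(S)$ (for $n$ even) is read off from the square class of $u$ or from the discriminant of $V$.

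With the normal form in hand, I apply Kitaoka's recursion formula for $b_p(h \oplus B; s)$ in terms of $b_p(h; s)$ (see \cite{Ki}). For $s = 0$ the recursion immediately yields $f_p(S/2; X) = \gam_{p, S/2}(X)$, i.e.\ $F_p(S/2; X) = 1$, matching the claim. For $s \geq 1$, a single further step of the recursion computes $F_p(S/2; X)$ explicitly: it equals $1 + \eta_p(S) p^{(n+1)/2}X$ for $n$ odd and $s = 1$; equals $1 - p^{n+1}X^2$ for $n$ odd and $s = 2$ (the remaining binary block being the $p$-scaled hyperbolic plane); equals $1$ for $n$ even and $s = 1$ (where $\xi_p(S) = 0$ because $K/\QQ_p$ is ramified); and equals $(1 - \xi_p(S) p^{(n+2)/2}X)(1 - \xi_p(S) p^{n/2}X)$ for $n$ even and $s = 2$. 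Assembling $f_p = \gam_{p, S/2}\cdot F_p$ and simplifying -- in the $n$-even, $s = 2$ case the factor $1 - \xi_p(S) p^{n/2}X$ in the denominator of $\gam_{p, S/2}$ cancels one of the factors of $F_p$ -- produces exactly the polynomials stated in the lemma.

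The main obstacle is reconciling the Hasse-type invariant $\eta_p(S)$ of Definition \ref{def:11} (defined via the anisotropic kernel rather than via a Hilbert symbol) with the invariants appearing in Kitaoka's recursion, and tracking the signs across the six cases. Apart from this bookkeeping, the remaining work -- plugging each normal form into the recursion and reading off the polynomial -- is mechanical.
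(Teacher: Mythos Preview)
Your plan is sound and, if carried out carefully, gives the lemma; the computed values of $F_p(2^{-1}S;X)$ in each of the six cases are correct and reassemble to the displayed polynomials. However, your route differs from the paper's. The paper does not redo the recursion: it simply invokes Katsurada's explicit formula for the Siegel series of a maximal lattice, \cite[Lemma~3.3]{Ka}, and the entire proof consists of translating Katsurada's invariants into the present notation. Concretely, writing $B=2^{-1}S$ and letting $B_2$ be the ``bad'' Jordan block and $l$ its rank as in \cite{Ka}, the only nontrivial observation (extracted from the local analyses in Lemmas~\ref{lem:52} and~\ref{lem:71}) is that
\[
\xi_p(B_2)=\begin{cases}\eta_p(B)&\text{if }l=n-1,\ 2\nmid n,\\ -\xi_p(B)&\text{if }l=n-2,\ 2\mid n,\end{cases}
\]
after which Katsurada's formula is verbatim the statement of the lemma.

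Your approach buys self-containment: you never need Katsurada's paper, only Kitaoka's recursion from \cite{Ki}, and the argument makes transparent why $F_p$ has degree $s_p(S)$. The paper's approach buys brevity and avoids the case-by-case recursion and the sign bookkeeping you flag as the main obstacle. One caution if you pursue your route: your assertion that for $n$ odd and $s=2$ the scaled binary block is a $p$-scaled \emph{hyperbolic} plane is not quite right over $\ZZ_p$ in general (the anisotropic kernel of $S$ is three-dimensional in that case, and the rank-two Jordan constituent at level $p$ need not be split); you still get $F_p=1-p^{n+1}X^2$, but the justification should go through the actual Jordan block rather than assuming it is hyperbolic.
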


\begin{proof}
Let $B$, $B_2$, $n$ and $l$ be the ones in \cite[Lemma 3.3]{Ka}. 
From the proof of Lemma \ref{lem:52} or \ref{lem:71}, we can observe that
\[\xi_p(B_2)=\begin{cases}
\eta_p(B) &\tx{if $l=n-1$, $2\nmid n$. }\\
-\xi_p(B) &\tx{if $l=n-2$, $2|n$. } 
\end{cases}\]
Therefore our assertion is a restatement of \cite[Lemma 3.3]{Ka}.  
\end{proof}

We put 
\[\wtl{F}_p(h;X)=\begin{cases}
X^{-\frkf_p(\det(2h))}F_p(h;p^{-(\ell+1)/2}X) &\tx{if $2|\ell$. }\\
X^{-\ord_p(2^{-1}\det(2h))}F_p(h;p^{-(\ell+1)/2}X^2) &\tx{if $2\nmid\ell$. }
\end{cases}\]
\begin{proposition}\label{prop:92}
Let $(a,\alp)\in\calt^+$. Then 
\[\wtl{F}_p(2^{-1}S_{a,\alp};X)=\begin{cases}
l_{p,S,\Del_{a,\alp}}(X)&\tx{if $2\nmid n$. }\\
l_{p,S,D_{a,\alp}}(X)&\tx{if $2|n$, $s_p(S)\neq 2$. }\\
l_{p,S,D_{a,\alp}}(X)(X^{-1}-\xi_p(S)X) &\tx{if $2|n$, $s_p(S)=2$. }
\end{cases}\]
\end{proposition}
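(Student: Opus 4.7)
The idea is to bridge $\wtl{F}_p(2^{-1}S_{a,\alp};X)$ and $l_{p,S,N}(X)$ through Sugano's function $I_p(S,(a,\alp);X)$. By Proposition~\ref{prop:82}, $l_{p,S,\Del_{a,\alp}}(X)$ (resp.\ $l_{p,S,D_{a,\alp}}(X)$) equals the normalized function $I^\sim_{p,S,a,\alp}(X)=X^{-e}\vrh_{p,S}(X)^{-1}I_p(S,(a,\alp);X^\epsilon)$ (with the appropriate $e,\epsilon$ and the linear prefactor $1-\Psi_p(\cdot)p^{-1/2}X$ in the odd case). It therefore suffices to produce a companion identity expressing $\wtl{F}_p(2^{-1}S_{a,\alp};X)$ in the same shape. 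The extra factor $X^{-1}-\xi_p(S)X$ in the exceptional case $2\mid n$, $s_p(S)=2$ is expected to reflect the fact that $\vrh_{p,S}(X)=1$ for $p\in\frkS_2$, whereas the denominator $\gam_{p,2^{-1}S_{a,\alp}}(X)$ relating $f_p$ to $F_p$ still contributes a non-cancelling factor.

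Concretely, I would apply Katsurada's explicit formula for the Siegel series \cite{Ka} (of which Lemma~\ref{lem:91} is the special case $h=2^{-1}S$ with $S$ maximal) to compute $F_p(2^{-1}S_{a,\alp};X)$, and then use Lemma~\ref{lem:91} to obtain $\gam_{p,2^{-1}S_{a,\alp}}(X)$ and hence $\wtl{F}_p(2^{-1}S_{a,\alp};X)$ explicitly. I would then use the canonical reduction $S_{a,\alp}=S^\sim\bigl[\Left 1 & x \\ 0 & p^f\Right\bigr]$ (Lemma~2.5 of \cite{Su}, recalled in the proof of Proposition~\ref{prop:82}) to express the local invariants of $2^{-1}S_{a,\alp}$---its normalized $p$-adic valuation, the character $\xi_p(S_{a,\alp})$, and the Hasse invariant $\eta_p(S_{a,\alp})$---in terms of the pair $(n_0,\pal),(n_0',\pal')$ indexing the rows of Table~\ref{tab:81}.

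The final step is a row-by-row comparison: for each pair $(n_0,\pal),(n_0',\pal')$ in Table~\ref{tab:81}, the explicit Katsurada formula for $\wtl{F}_p(2^{-1}S_{a,\alp};X)$ should coincide with the listed $I^\sim_{p,S,a,\alp}(X)$, after dividing by $\vrh_{p,S}(X)$ where applicable and supplying the factor $X^{-1}-\xi_p(S)X$ in the even-$n$, $s_p(S)=2$ case. Since both sides are uniquely determined by the same local invariants of $S$ and $(a,\alp)$ at $p$, the identity in each case reduces to a direct verification.

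The main obstacle will be the sheer size of the case analysis: Katsurada's formula at $p=2$ bifurcates into many more subcases than for odd $p$, and the regime $f>0$ (i.e.\ $S_{a,\alp}$ not $p$-maximal) requires identifying Katsurada's summation with the closed-form polynomials $l_{e,\pm 1}$ and $h_{e,\pm 1}$ of Table~\ref{tab:81}. Tracking the signs of $\eta_p(S_{a,\alp})$ versus $\eta_p(S^\sim)$ and $\eta_p(S)$, and of $\xi_p(S_{a,\alp})$ versus $\xi_p(S)$, under the reduction $S_{a,\alp}=S^\sim[\cdot]$ will also require careful bookkeeping, especially in the exceptional case that produces the extra factor $X^{-1}-\xi_p(S)X$.
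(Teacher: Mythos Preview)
Your approach is correct in principle but takes a genuinely different route from the paper. You propose a \emph{direct local computation}: evaluate $\wtl{F}_p(2^{-1}S_{a,\alp};X)$ via Katsurada's explicit formula for the Siegel series and match it, row by row, against Sugano's Table~\ref{tab:81} (i.e.\ against $I^\sim_{p,S,a,\alp}=l_{p,S,N}$ from Proposition~\ref{prop:82}). This would work, and you have correctly anticipated both the source of the extra factor $X^{-1}-\xi_p(S)X$ and the main cost, namely a large case analysis, especially at $p=2$.

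The paper instead argues \emph{globally}. It never computes $F_p(2^{-1}S_{a,\alp};X)$ directly for non-maximal $S_{a,\alp}$. Rather, Proposition~\ref{prop:91} identifies the $S/2$-th Fourier--Jacobi coefficient of the Siegel Eisenstein series $E^{n+1}_\kap$ with a constant multiple of the Jacobi Eisenstein series $E_{\kap,S}$. Comparing the $(a,\alp)$-th Fourier coefficient on both sides---using (\ref{tag:91}) for the Siegel side and Corollary~\ref{cor:81} (which rests on Proposition~\ref{prop:82}) for the Jacobi side---yields
\[
\prod_p\wtl{F}_p(2^{-1}S_{a,\alp};p^{k'-1/2})=\prod_pG_p(p^{k'-1/2})
\]
for infinitely many $k'$, where $G_p$ denotes the claimed right-hand side. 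Lemma~\ref{lem:91} is used only to compute the ratio $f_p(2^{-1}S;X)\vrh_{p,S}(p^{n/2+1}X)/\gam_{p,2^{-1}S_{a,\alp}}(X)$, which accounts for the three cases in the statement. The global product identity is then localized via \cite[Lemma~10.1]{Ik2}, and the factors are separated using that each $F_p(2^{-1}S_{a,\alp};X)$ has constant term~$1$.

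In short: your approach trades the Eisenstein-series machinery of \S\ref{sec:9} for Katsurada's full formula and a lengthy case check, while the paper trades that case check for Proposition~\ref{prop:91} and an appeal to Ikeda's localization lemma. The paper's route is shorter here because the relevant Sugano computation is already in hand from Proposition~\ref{prop:82}; your route would be self-contained at $p$ but substantially longer to execute.
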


\begin{proof}
To be uniform, we denote the right-hand side by $G_p(X)$. 
From Lemma \ref{lem:91}, we have 
\[\frac{f_p(2^{-1}S;X)\vrh_{p,S}(p^{n/2+1}X)}{\gam_{p,2^{-1}S_{a,\alp}}(X)}=\begin{cases}
1-\xi_p(S_{a,\alp})p^{(n+1)/2}X &\tx{if $2\nmid n$. }\\
1&\tx{if $p\notin\frkS_2$, $2|n$. }\\
1-\xi_p(S)p^{(n+2)/2}X &\tx{if $p\in\frkS_2$, $2|n$. }
\end{cases} \]
Observe that 
\begin{multline*}
\frac{(2(\iu\pi)^\kap)^{n+1}}{(2(\iu\pi)^\kap)^n}\frac{\vGm_{n+1}(\kap)^{-1}}{\vGm_n(\kap)^{-1}}
\frac{(\det S_{a,\alp})^{\kap-(n+2)/2}}{(\det S)^{\kap-(n+1)/2}}\\
=\frac{(-1)^{\kap/2}2^{\kap-n/2}\pi^{\kap-n/2}(a-S[\alp]/2)^{\kap-n/2-1}}{(\det S)^{1/2}\vGm(\kap-n/2)}
=I_{\kap,a,\alp,\infty}(\ono_2,0). 
\end{multline*}
Combining (\ref{tag:91}), Proposition \ref{prop:82} and \ref{prop:91}, we have 
\[\prod_p\wtl{F}_p(2^{-1}S_{a,\alp};p^{k\Pr-1/2})=\prod_pG_p(p^{k\Pr-1/2}), \]
where $k\Pr=k$ or $k\Pr=k/2$ according as $n$ is odd or even. 
We have
\[\prod_p\wtl{F}_p(2^{-1}S_{a,\alp};X_p)=\prod_pG_p(X_p)\]
by \cite[Lemma 10.1]{Ik2}. 
As the constant term of $F_p(2^{-1}S_{a,\alp};X)$ is $1$, we obtain the desired fact. 
\end{proof}
 
In \cite{Ik2} Ikeda proved the following theorem. 
\begin{theorem}[Ikeda]\label{thm:91}
Suppose that $n$ is odd and $\kap=k+\tfrac{n+1}{2}$ is even. 
Let $f\in S_{2k}(\SL_2(\ZZ))$ be a normalized Hecke eigenform, the $L$-function of which is given by 
\[\prod_p(1-\alp_pp^{k-1/2-s})^{-1}(1-\alp_p^{-1}p^{k-1/2-s})^{-1}. \]
Let $g\in S^+_{k+1/2}(4)$ be a Hecke eigenform corresponding to $f$ under the Shimura correspondence. 
Put $D_h=2^{n+1}\det h$ for $h\in T_{n+1}^+$. 
Then the function 
\begin{align*}
F(Z)&=\sum_{h\in T^+_{n+1}}c(\frkd_{D_h})\frkf_{D_h}^{k-1/2}\prod_p\wtl{F}_p(h;\alp_p)\bfe(\tr(hZ)), & 
Z&\in\frkH_{n+1}
\end{align*}
is a Siegel cusp form of weight $\kap$ with respect to $\Sp_{n+1}(\ZZ)$. 
\end{theorem}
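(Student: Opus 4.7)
The strategy is essentially Ikeda's in \cite{Ik2}, with the connection to the Jacobi form theory of this paper providing an illuminating perspective. The starting point is to recognize $F$ as obtained from the Siegel Eisenstein series $E^{n+1}_\kap$ of Proposition \ref{prop:91} by replacing the Euler-type specialization $p^{k-1/2}$ occurring in $\wtl{F}_p(h;p^{k-1/2})$ by the Satake parameter $\alp_p$ of $f$. By Proposition \ref{prop:92}, for $h = 2^{-1}S_{a,\alp}$ (with $S$ a rank-$n$ positive definite symmetric even integral matrix for which $\ZZ^n$ is $S$-maximal) this substitution yields $\wtl{F}_p(h;\alp_p) = l_{p,S,\Del_{a,\alp}}(\alp_p)$, and using $D_h = \Del_{a,\alp}$ one checks that the $S/2$-th Fourier-Jacobi coefficient of $F$ coincides with the Jacobi form $\vPh$ of Theorem \ref{thm:31} specialized to $b = d = 1$.

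By Theorem \ref{thm:31}, each such $F_{S/2}$ lies in $J^{\cusp,M}_{\kap,S}$. From this one immediately obtains: (i) cuspidality and positivity of the Fourier expansion, since each $F_{S/2}$ is a Jacobi cusp form, so $c_F(h) = 0$ unless $h > 0$; (ii) invariance of $F$ under the Siegel parabolic $P_{n+1}(\ZZ)$, where the translation part is encoded by the Fourier expansion and the Levi part by the $\GL_{n+1}(\ZZ_p)$-invariance of the Siegel series $b_p(h,s)$; and (iii) invariance under each Jacobi subgroup $\SL_2 \cdot H_S \subset \Sp_{n+1}(\ZZ)$ attached to a given $S$, which is precisely the Jacobi transformation law satisfied by $F_{S/2}$.

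The remaining step, which is the technical core of Ikeda's argument, is to bootstrap these partial invariances into full $\Sp_{n+1}(\ZZ)$-invariance. Ikeda accomplishes this by realizing $F$ via the pullback of a Siegel Eisenstein series on a doubled symplectic group, integrated against the half-integral weight form $g$ on an auxiliary $\SL_2$-factor; the resulting integral is manifestly $\Sp_{n+1}(\ZZ)$-invariant, and its Fourier coefficients are computed to match those of $F$ by applying the local Siegel-series formulas of Lemma \ref{lem:91} together with the Euler factorization of the standard $L$-function of $f$. The main obstacle in this program is precisely this Fourier-coefficient matching: one must align the local archimedean and non-archimedean analyses of the doubling integral with the explicit shape of $\wtl{F}_p$ and with the Satake parameters $\alp_p$, tracking the normalizing factors $\vGm_\ell$ and the hypergeometric function $\Xi$ as they appear. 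Once the matching is accomplished, the Siegel cusp form property of $F$ is immediate, and the identification $F_{S/2} = \vPh$ closes the circle with Theorem \ref{thm:31}.
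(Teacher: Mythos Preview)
The paper does not prove Theorem~\ref{thm:91}; it is quoted from \cite{Ik2} as an established result and then used to deduce Corollary~\ref{cor:91}. Your proposal likewise defers the essential step---full $\Sp_{n+1}(\ZZ)$-invariance---to Ikeda, so at the level of what is actually demonstrated, your sketch and the paper agree: both rest on \cite{Ik2}.

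Two remarks are nonetheless in order. First, your use of Theorem~\ref{thm:31} to verify that each $F_{S/2}$ lies in $J^{\cusp,M}_{\kap,S}$ reverses the paper's logical flow: that identification is precisely Corollary~\ref{cor:91}, deduced \emph{from} Theorem~\ref{thm:91} together with Proposition~\ref{prop:92}, not toward it. There is no circularity, since Theorem~\ref{thm:31} is proved independently of \cite{Ik2}; but the Jacobi-form observations you list under (i)--(iii) do not by themselves advance the proof of Theorem~\ref{thm:91}, as you yourself note. Moreover, Theorem~\ref{thm:31} applies only when $\ZZ^n$ is a maximal lattice for $S$, so your argument as written covers only those Fourier--Jacobi coefficients.

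Second, your description of Ikeda's method as a pullback from a doubled symplectic group does not match the argument in \cite{Ik2}. Ikeda does not use a doubling integral. The heart of his proof is the functional equation of the normalized Siegel-series polynomials $\wtl{F}_p(h;X)$ (a symmetry under $X\mapsto X^{-1}$), combined with a Fourier--Jacobi induction that reduces the invariance under the element $J=\bigl(\begin{smallmatrix}0&-\ono_{n+1}\\ \ono_{n+1}&0\end{smallmatrix}\bigr)$ to a compatibility already controlled by the half-integral-weight form $g$. If you wish to give an actual proof rather than a citation, that is the mechanism to reproduce; the partial invariances in your (ii)--(iii) are the easy generators, and the functional equation of $\wtl{F}_p$ is what handles the remaining one.
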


Now we have the following important corollary. 

\begin{corollary}\label{cor:91}
With the notation of Theorem \ref{thm:91}, the $S/2$-th Fourier-Jacobi coefficient of $F$ coincides with the image of $f$ under the lifting described in Theorem \ref{thm:31}. 
\end{corollary}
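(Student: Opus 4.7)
The plan is to verify that the two sides agree coefficient-by-coefficient by a direct matching, using Proposition~\ref{prop:92} to handle the local factors and Theorem~\ref{thm:91} to write down the Fourier coefficients of $F$ explicitly.

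First I will compute $D_h$ for $h=2^{-1}S_{a,\alp}$. Since $h$ has size $n+1$,
\[
D_{2^{-1}S_{a,\alp}}=2^{n+1}\det(2^{-1}S_{a,\alp})=\det S_{a,\alp}=\Del_{a,\alp}.
\]
For $(a,\alp)\in\calt^0$ one has $\det S_{a,\alp}=0$, so $h\notin T^+_{n+1}$ and no such term appears in $F$; thus the Fourier--Jacobi expansion of $F$ is indexed only by $(a,\alp)\in\calt^+$, in agreement with the definition of $\vPh$ in Theorem~\ref{thm:31}.

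Next I will insert Proposition~\ref{prop:92}. Since $n$ is odd, it gives
\[
\wtl{F}_p\bigl(2^{-1}S_{a,\alp};X\bigr)=l_{p,S,\Del_{a,\alp}}(X),
\]
so by the coefficient formula in Theorem~\ref{thm:91}, the $(S/2)$-th Fourier--Jacobi coefficient of $F$ at $(a,\alp)\in\calt^+$ is
\[
c_F\bigl(2^{-1}S_{a,\alp}\bigr)=c_g\bigl(\frkd_{\Del_{a,\alp}}\bigr)\frkf_{\Del_{a,\alp}}^{k-1/2}\prod_p l_{p,S,\Del_{a,\alp}}(\alp_p).
\]

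Finally I will match this with the formula of Theorem~\ref{thm:31}. In the present setting $f\in S_{2k}(\SL_2(\ZZ))$, so $b=d=1$; consequently $bd=1$ has no prime divisors and $\bfb_{bd}(N)=0$ for every $N$. The formula for $c_\vPh$ therefore reduces to
\[
c_\vPh(\Del_{a,\alp})=c_g\bigl(\frkd_{\Del_{a,\alp}}\bigr)\frkf_{\Del_{a,\alp}}^{k-1/2}\prod_p l_{p,S,\Del_{a,\alp}}(\alp_p),
\]
which is exactly $c_F(2^{-1}S_{a,\alp})$. Hence $F_{S/2}=\vPh$, as claimed. There is no real obstacle here: all of the work has already been absorbed into Proposition~\ref{prop:92}, which reconciles the essential part of the local Siegel series with the Laurent polynomial $l_{p,S,N}$ used to define the lifting $\vPh$.
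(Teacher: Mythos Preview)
Your proof is correct and is exactly the verification the paper has in mind: the corollary is stated without proof because it is immediate from Proposition~\ref{prop:92} together with the observation that $D_{2^{-1}S_{a,\alp}}=\Del_{a,\alp}$ and that $b=d=1$ forces $\bfb_{bd}(N)=0$. Your additional remark that $(a,\alp)\in\calt^0$ contributes nothing to $F_{S/2}$ is a nice point of completeness.
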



\section{\bf Application to Maass spaces on orthogonal groups}\label{sec:10}

We shall give an explicit Fourier coefficient formula of the theta lifting attached to the orthogonal group of signature $(2,n+2)$.  
Put
\begin{align*}
Q&=\Let & & 1 \\ & -S & \\ 1 & & \Rit, &
Q\Pr&=\Let & & 1 \\ & Q & \\ 1 & & \Rit, \\
U&=\QQ e\oplus X\oplus \QQ f, & 
V&=\QQ e\Pr\oplus U\oplus \QQ f\Pr, \\
L_1&=\ZZ e\oplus L\oplus \ZZ f, &
L_2&=\ZZ e\Pr\oplus L_1\oplus \ZZ f\Pr. 
\end{align*}
The special orthogonal group $\SO(V)$ of $Q\Pr$ is an algebraic group defined over $\QQ$, 
the group of $D$-valued points of which is given by 
\[\SO(V)(D)=\{\alp\in \SL_{n+4}(D)\;|\; \trs\alp Q\Pr\alp=Q\Pr\} \]
for every $\QQ$-algebra $D$. 

For each place $v$ of $\QQ$, we set $X_v=X\otimes_\QQ\QQ_v$ and $U_v=U\otimes_\QQ\QQ_v$. Put
\begin{align*}
\frkD_S&=\{\calx\in U_\infty \;|\; Q(e+f, \calx)>0,\; Q[\calx]>0\}, \\
\cald_S&=\{\calz=\calx+\iu\caly\in U_\infty\otimes_\RR\CC\;|\; \calx\in U_\infty,\; \caly\in \frkD_S\}. 
\end{align*}
The connected component of $\ono_{n+4}$ in the topological group $\SO(V)(\RR)$ is denoted by $\SO(V)(\RR)^\circ$. 
The action of the group $\SO(V)(\RR)^\circ$ on $\cald_S$ and the automorphy factor $j(g, \calz)$ on $\SO(V)(\RR)^\circ\times\cald_S$ are defined by 
\begin{align*}
g\calz^\sim&=(g\calz)^\sim j(g, \calz), & \calz^\sim&=\Let -Q[\calz]/2 \\ \calz \\ 1 \Rit 
\end{align*}
for $g\in\SO(V)(\RR)^\circ$ and $\calz\in\cald_S$. 
The modular group $\Tht$ is an arithmetic subgroup $\{\gam\in\SO(V)(\RR)^\circ\;|\;\gam L_2\subset L_2\}$.  
We put 
\begin{gather*}
T=Q^{-1}L_1, \quad\quad\quad\quad\quad T^+=T\cap\frkD_S, \\
T^0=\{\eta\in T\;|\;Q(e+f,\eta)> 0,\; Q[\eta]=0\}.  
\end{gather*}

For a $\CC$-valued function $F$ on $\cald_S$, we put
\[F|_\kap\alp(\calz)=j(\alp, \calz)^{-\kap}F(\alp\calz)\]
for $\alp\in \SO(V)(\RR)^\circ$. 
A holomorphic function $F$ on $\cald_S$ is called a modular (resp. cusp) form of weight $\kap$ if $F|_\kap\gam=F$ for every $\gam\in\Tht$ and has a Fourier expansion of the form  
\[F(\calz)=\sum_{\eta}c_F(\eta)\bfe(Q(\eta, \calz)) \]
where $\eta$ extends over all elements of $\{0\}\cup T^0\cup T^+$ (resp. $T^+$). 
We denote the space of modular forms of weight $\kap$ by $M_\kap(\Tht)$ and that of cusp forms of weight $\kap$ by $S_\kap(\Tht)$. 

Recall that the first Fourier-Jacobi coefficient of $F$ is defined by
\[\phi(\tau,w)=\sum_{(a,\alp)\in\calt^0\cup\calt^+}c_F(e+\alp+af)q^a\bfe(S(\alp,w)). \] 
As is well-known, $\phi\in J_{\kap,S}$. 

For $\eta\in T^0\cup T^+$, we put $D_\eta=D_SQ[\eta]/2$ and
\[\eps(\eta)=\max\{N\in\NN\;|\;N^{-1}\eta\in T\}. \]

\begin{definition}\label{def:101}
A form $F\in M_\kap(\Tht)$ is an element of $M^M_\kap(\Tht)$ if there exists a function $c:\NN\cup\{0\}\to\CC$ such that all $\eta\in T^0\cup T^+$ satisfy
\[c_F(\eta)=\sum_{d|\eps(\eta)}d^{\kap-1}c(d^{-2}D_\eta). \]
Put $S^M_\kap(\Tht)=M^M_\kap(\Tht)\cap S_\kap(\Tht)$. 
\end{definition}
  
\begin{theorem}[Sugano]\label{thm:101}
The association
\[\sum_{(a,\alp)\in\calt^+}c(D_{a,\alp})q^a\bfe(S(\alp,w))\mapsto\sum_{\eta\in T^+}\sum_{d|\eps(\eta)}d^{\kap-1}c(d^{-2}D_\eta)\bfe(Q(\eta, \calz))\]
gives an isomorphism of $J^{\cusp,M}_{\kap,S}$ onto $S^M_\kap(\Tht)$. 
\end{theorem}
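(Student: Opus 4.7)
The strategy is to exhibit the inverse map via the first Fourier-Jacobi coefficient, reduce surjectivity to the explicit Hecke eigenforms of Theorems~\ref{thm:31} and~\ref{thm:32}, and close the argument via Corollary~\ref{cor:91} together with Murase--Sugano~\cite{MS}.

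For $F\in S^M_\kap(\Tht)$ with Fourier expansion $F(\calz)=\sum_{\eta\in T^+}c_F(\eta)\bfe(Q(\eta,\calz))$, set
\[
\phi_F(\tau,w)\;=\;\sum_{(a,\alp)\in\calt^+} c_F(e+\alp+af)\,q^a\,\bfe(S(\alp,w)),
\]
the first Fourier-Jacobi coefficient of $F$ along the rank-one parabolic of $\SO(V)$ stabilising $\QQ e\Pr$. For $\eta=e+\alp+af$ the $e$-component is $1$, so $\eps(\eta)=1$, while $Q[\eta]=2a-S[\alp]$ gives $D_\eta=D_{a,\alp}$; the Maass relation collapses to $c_F(e+\alp+af)=c(D_{a,\alp})$, showing $\phi_F\in J^{\cusp,M}_{\kap,S}$. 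The map $F\mapsto\phi_F$ is injective, since the Maass relation reconstructs every $c_F(\eta)$ from the single scalar function $c$, and composed with the association of the theorem it is the identity on $J^{\cusp,M}_{\kap,S}$. The theorem therefore reduces to showing that for every $\phi\in J^{\cusp,M}_{\kap,S}$ the formal series
\[
F_\phi(\calz)\;=\;\sum_{\eta\in T^+}\sum_{d\mid\eps(\eta)}d^{\kap-1}c_\phi(d^{-2}D_\eta)\,\bfe(Q(\eta,\calz))
\]
converges and defines an element of $S^M_\kap(\Tht)$.

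By linearity and the decomposition of $J^{\cusp,M}_{\kap,S}$ into eigenlines spanned by the lifts $\vPh$ of Theorems~\ref{thm:31} and~\ref{thm:32}, one may take $\phi=\vPh$. When $b=d=1$ (and $n$ is odd), Corollary~\ref{cor:91} realises $\vPh$ as the $S/2$-th Fourier-Jacobi coefficient of the Ikeda lift, a Siegel cusp form in the Maass subspace; the classical compatibility between Siegel and orthogonal Maass forms on $\SO(2,n+2)$ (Gritsenko, Krieg, Sugano) then exhibits $\vPh$ as the first Fourier-Jacobi coefficient of a genuine $F\in S^M_\kap(\Tht)$, whose remaining coefficients automatically satisfy the Maass relation and so coincide with those of $F_\vPh$. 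For arbitrary $(b,d)$ the theta-lifting construction of Murase--Sugano~\cite{MS} yields a Maass form on $\Tht$ with the same Hecke eigenvalues as $\vPh$; the principal remaining obstacle is to normalise this lift and verify that its first Fourier-Jacobi coefficient is precisely $\vPh$, a Fourier-coefficient match I would effect using the explicit local polynomials $l_{p,S,N}$ of \S\ref{sec:3} together with the identification $l_{p,S,\Del_{a,\alp}}=\wtl{F}_p(2^{-1}S_{a,\alp};\,\cdot\,)$ of Proposition~\ref{prop:92}.
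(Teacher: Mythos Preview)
The paper does not prove this theorem at all: it is attributed to Sugano, and the paper's ``proof'' consists of the single line ``See \cite[Theorem 6.2, Remark 6.5]{Su} or \cite{MS}.'' Sugano's argument in \cite{Su} establishes modularity of $F_\phi$ directly, by analysing the action of generators of $\Tht$ on the Fourier expansion (in the spirit of the classical Maass lift for $\Sp_2$); it is a structural result about the Fourier--Jacobi expansion that does not require any knowledge of which Hecke eigenforms populate $J^{\cusp,M}_{\kap,S}$.

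Your route is therefore genuinely different, and it has a real gap. You reduce surjectivity to the Hecke eigenforms $\vPh$ supplied by Theorems~\ref{thm:31} and~\ref{thm:32}, and then propose to realise each $\vPh$ as the first Fourier--Jacobi coefficient of some $F\in S^M_\kap(\Tht)$. For $n$ odd, $b=d=1$ you invoke the Ikeda lift via Corollary~\ref{cor:91} together with an unspecified ``classical compatibility'' between Siegel and orthogonal Maass spaces; for all remaining cases (general $b,d$, and the entire even-$n$ situation) you appeal to a theta lift from \cite{MS} and then explicitly concede that ``the principal remaining obstacle is to normalise this lift and verify that its first Fourier--Jacobi coefficient is precisely $\vPh$''. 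That is exactly the content you are trying to prove: without an independent verification that the Murase--Sugano lift is non-zero and has the prescribed Fourier--Jacobi coefficient, you have not exhibited a preimage of $\vPh$, so surjectivity remains open. Moreover, even granting such a verification, your argument would make Theorem~\ref{thm:101} logically posterior to the arithmetic classification Theorems~\ref{thm:31} and~\ref{thm:32}, inverting the paper's intended dependency (Theorem~\ref{thm:101} is quoted as input to derive Theorems~\ref{thm:102} and~\ref{thm:103}) and replacing a direct modularity argument with a much heavier detour through the Shimura correspondence, newform theory, and Siegel series computations.
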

\begin{proof}
See \cite[Theorem 6.2, Remark 6.5]{Su} or \cite{MS}. 
\end{proof}

Combining results in \S \ref{sec:3} with Theorem \ref{thm:101}, we obtain the following two theorems. 

\begin{theorem}\label{thm:102}
Under the same notation as in Theorem \ref{thm:31}, we put 
\[c_F(N)=\sum_{d|\eps(\eta)}2^{-\bfb_{bd}(N)}d^{n/2}c_g(\frkd_N)\frkf_N^{k-1/2}\prod_pl_{p,S,d^{-2}N}(\alp_p) \]for each positive integer $N$. 
Putting $\Del_\eta=\del_SD_\eta$, we define the function $F$ on $\cald_S$ by   
Then 
\[F(\calz)=\sum_{\eta\in T^+}c_F(\Del_\eta)\bfe(Q(\eta,\calz)). \]
Then $F$ is a Hecke eigenform in $S^M_\kap(\Tht)$. 
Moreover, the lifting $f\mapsto F$ gives a bijective correspondence, up to scalar multiple, between Hecke eigenforms in 
\[\oplus_{b,d\geq 1,\;b|b_S,\; d|d_S}\{f\in S^\new_{2k}(\Gam_0(bd))\;|\;f|W(p)=\eta_p(S)f\tx{ for each }p|b\}\] 
and those in $S^M_\kap(\Tht)$. 
\end{theorem}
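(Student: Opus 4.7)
The strategy is to obtain this theorem as a direct consequence of Theorem~\ref{thm:31} combined with the isomorphism $J^{\cusp,M}_{\kap,S} \simeq S^M_\kap(\Tht)$ of Theorem~\ref{thm:101}. The plan proceeds in three steps.

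First, given $f$ satisfying the hypotheses, Theorem~\ref{thm:31} produces a Hecke eigenform $\vPh \in J^{\cusp,M}_{\kap,S}$ whose $(a,\alp)$-th Fourier coefficient equals $c_\vPh(\Del_{a,\alp})$. Since (\ref{tag:11}) yields $\Del_{a,\alp} = \del_S D_{a,\alp}$ when $n$ is odd, the map $c(N) := c_\vPh(\del_S N)$ converts the expansion of $\vPh$ into precisely the form required by the isomorphism of Theorem~\ref{thm:101}. Applying that isomorphism gives a modular form $F \in S^M_\kap(\Tht)$ whose $\eta$-th Fourier coefficient is $\sum_{d \mid \eps(\eta)} d^{\kap-1} c_\vPh(d^{-2} \Del_\eta)$. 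Because the Murase--Sugano isomorphism is Hecke-equivariant (see \cite{MS}), $F$ is automatically a Hecke eigenform.

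Next I would verify that this Fourier coefficient matches the expression displayed in the statement. Expanding $c_\vPh(d^{-2}\Del_\eta)$ via the explicit formula of Theorem~\ref{thm:31} and writing $N = \Del_\eta$, three observations are needed: (a) $d \mid \eps(\eta)$ implies $\frkd_{d^{-2}N} = \frkd_N$ and $\frkf_{d^{-2}N} = d^{-1}\frkf_N$, so that the $c_g$-factor becomes $c_g(\frkd_N)$ and the power-of-$\frkf$ factor contributes $d^{-(k-1/2)}\frkf_N^{k-1/2}$; (b) $\bfb_{bd}(d^{-2}N) = \bfb_{bd}(N)$, since $d^{-2}N$ and $N$ differ by a perfect square and hence determine the same local quadratic extensions; and (c) combining the powers of $d$ gives $d^{\kap-1-(k-1/2)} = d^{n/2}$, because $\kap - k = (n+1)/2$. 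The resulting expression coincides with the displayed formula for $c_F(\Del_\eta)$.

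Finally, the bijectivity claim follows formally. Theorem~\ref{thm:31} yields a bijection (up to scalar) between Hecke eigenforms in the asserted direct sum of newform spaces and Hecke eigenforms in $J^{\cusp,M}_{\kap,S}$, while Theorem~\ref{thm:101} is a Hecke-equivariant $\CC$-linear isomorphism onto $S^M_\kap(\Tht)$; composing these gives the desired correspondence. No substantive new machinery is required, so the main obstacle is entirely bookkeeping: one must reconcile the two indexing conventions for Fourier coefficients (the factor $\del_S$ relating $\Del_{a,\alp}$ to $D_{a,\alp}$, and the divisibility properties of $\eps(\eta)$) and carefully track the powers of $d$ across the isomorphism. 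All the substantive content is already packaged in the earlier theorems and in Sugano's prior work.
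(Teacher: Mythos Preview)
Your proposal is correct and follows exactly the approach the paper takes: the paper simply states that Theorems~\ref{thm:102} and~\ref{thm:103} are obtained by combining the results of \S\ref{sec:3} with Theorem~\ref{thm:101}, and your argument spells out precisely this composition together with the bookkeeping needed to match the Fourier-coefficient formulas.
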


\begin{theorem}\label{thm:103}
Under the same notation as in Theorem \ref{thm:32}, we put 
\[c_F(N)=\sum_{d|\eps(\eta)}d^{n/2}N^{k-1/2}\prod_pl_{p,S,d^{-2}N}(\alp_p)\]
for each positive integer $N$. 
Then 
\[F(\calz)=\sum_{\eta\in T^+}c_F(D_\eta)\bfe(Q(\eta,\calz)) \]
 is a Hecke eigenform in $S^M_\kap(\Tht)$. 
 Moreover, the lifting $f\mapsto F$ defines a bijection from the orbits of Atkin-Lehner involutions in 
\[\bigcup_{d\geq 1, \; d|d_S}\Prm^*_k(\frkd_Sd,\chi_S) \]
and Hecke eigenforms in $S^M_\kap(\Tht)$. 
\end{theorem}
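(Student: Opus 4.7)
The plan is to derive Theorem~\ref{thm:103} as an immediate corollary of Theorem~\ref{thm:32} and Sugano's isomorphism (Theorem~\ref{thm:101}). Starting from a primitive form $f \in \Prm^*_k(\frkd_Sd, \chi_S)$, Theorem~\ref{thm:32} produces a Hecke eigenform $\vPh \in J^{\cusp,M}_{\kap,S}$ with explicitly given Fourier coefficients. I would then define $F$ to be the image of $\vPh$ under the isomorphism $J^{\cusp,M}_{\kap,S} \simeq S^M_\kap(\Tht)$ supplied by Theorem~\ref{thm:101}. Membership $F \in S^M_\kap(\Tht)$ is then immediate, and since the isomorphism is equivariant for the Hecke algebra (as is part of the content of Sugano's and Murase-Sugano's work), $F$ inherits the Hecke-eigenform property directly from $\vPh$.

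To verify the explicit Fourier expansion, I would apply the formula in Theorem~\ref{thm:101} with $c(\cdot) = c_\vPh(\cdot)$. The coefficient of $\bfe(Q(\eta, \calz))$ in $F$ is then
\[
c_F(D_\eta) = \sum_{d \mid \eps(\eta)} d^{\kap - 1} c_\vPh(d^{-2} D_\eta).
\]
Substituting $c_\vPh(N) = N^{(k-1)/2} \prod_p l_{p,S,N}(\alp_p)$ from Theorem~\ref{thm:32} and using $\kap = k + n/2$, the exponent of $d$ collapses to $n/2$, so each summand reduces to $d^{n/2} D_\eta^{(k-1)/2} \prod_p l_{p,S,d^{-2} D_\eta}(\alp_p)$. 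This matches the formula displayed in the statement, the only bookkeeping point being the exponent of $d$.

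The bijective correspondence is then obtained by composition: Theorem~\ref{thm:32} identifies Atkin-Lehner orbits in $\bigcup_{d \mid d_S} \Prm^*_k(\frkd_Sd, \chi_S)$ bijectively with Hecke eigenforms in $J^{\cusp,M}_{\kap,S}$, and Sugano's isomorphism transports these bijectively onto Hecke eigenforms in $S^M_\kap(\Tht)$. The substantive analytic and representation-theoretic work is already contained in Sections~\ref{sec:6}--\ref{sec:7} (construction of $\vPh$ and the bijection for Jacobi forms) and in Sugano's paper (the Hecke-equivariant identification of $J^{\cusp,M}_{\kap,S}$ with $S^M_\kap(\Tht)$). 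Consequently there is no genuine obstacle at this stage; the theorem is a transparent composition of two previously established bijections, and the main item to double-check is the elementary exponent manipulation in the Fourier coefficient identity above.
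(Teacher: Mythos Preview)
Your proposal is correct and matches the paper's own approach exactly: the paper simply states that Theorems~\ref{thm:102} and~\ref{thm:103} are obtained by combining the results of \S\ref{sec:3} (here Theorem~\ref{thm:32}) with Sugano's isomorphism Theorem~\ref{thm:101}, with no further argument. Your exponent computation $d^{\kap-1}\cdot d^{-(k-1)}=d^{n/2}$ is the only bookkeeping needed, and it is correct (note that the displayed exponent $N^{k-1/2}$ in the statement should read $N^{(k-1)/2}$, consistent with Theorem~\ref{thm:32}).
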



\end{document}